\def\ol#1{\overline{#1}}
\def\wh#1{\widehat{#1}}
\def\wt#1{\widetilde{#1}}
\def\ul#1{\underline{#1}}
\def\smcompactification#1{\ol{#1}}
\theoremstyle{plain}
    \newtheorem{theorem}{Theorem}[section]
    \newtheorem{proposition}[theorem]{Proposition}
    \newtheorem{lemma}[theorem]{Lemma}
    \newtheorem{corollary}[theorem]{Corollary}
\theoremstyle{definition}
    \newtheorem{definition}[theorem]{Definition}
    \newtheorem{remark}[theorem]{Remark}
\def\Alphabet{A,B,C,D,E,F,G,H,I,J,K,L,M,N,O,P,Q,R,S,T,U,V,W,X,Y,Z}
\def\alphabet{a,b,c,d,e,f,g,h,i,j,k,l,m,n,o,p,q,r,s,t,u,v,w,x,y,z}
\def\endpiece{xxx}
\def\makeAlphabet[#1]{\expandafter\makeA#1,xxx,}
\def\makealphabet[#1]{\expandafter\makea#1,xxx,}
\def\makeA#1,{\def\temp{#1}\ifx\temp\endpiece\else%
\mkbb{#1}\mkfrak{#1}\mkbf{#1}\mkcal{#1}\mkscr{#1}\expandafter\makeA\fi}%
\def\makea#1,{\def\temp{#1}\ifx\temp\endpiece\else\mkfrak{#1}\mkbf{#1}\expandafter\makea\fi}%
\def\mkbb#1{\expandafter\def\csname bb#1\endcsname{\mathbb{#1}}}
\def\mkfrak#1{\expandafter\def\csname fr#1\endcsname{\mathfrak{#1}}}
\def\mkbf#1{\expandafter\def\csname b#1\endcsname{\mathbf{#1}}}
\def\mkcal#1{\expandafter\def\csname c#1\endcsname{\mathcal{#1}}}
\def\mkscr#1{\expandafter\def\csname s#1\endcsname{\mathscr{#1}}}
\def\makeop[#1]{\xmakeop#1,xxx,}
\def\mkop#1{\expandafter\def\csname #1\endcsname{{\mathrm{#1}}}} %
\def\xmakeop#1,{\def\temp{#1}\ifx\temp\endpiece\else\mkop{#1}\expandafter\xmakeop\fi}%
\def\Gm{{\bbG_m}}
\def\isom{\cong}
\def\verk{\circ}
\def\Isocda{\Isoc^{\kern-0.5mm\dagger}}
\def\shM{\cM}
\def\isom{\cong}
\def\verk{\circ}
\def\Garith{\Gamma^{\arith}}
\def\GarithN{\Gamma(N)^{\arith}}
\def\shomega{\underline{\omega}}
\def\MN{M(N)}
\def\MarithN{M_\arith(N)}
\def\sElliptic{E}
\def\sModular{M}
\def\et{\operatorname{\text{\'e}t}}
\def\frobphi{\phi}
\begin{document}
\title[Elliptic Polylog]{$p$-adic elliptic polylogarithm, $p$-adic Eisenstein series and Katz measure}
\begin{abstract}
	The specializations of the motivic elliptic polylogarithm on the universal elliptic curve to the modular curve
	are referred to as Eisenstein classes.  In this paper, we prove that the syntomic realizations of the Eisenstein classes 
	restricted to the ordinary locus of  the modular curve may be expressed using $p$-adic Eisenstein-Kronecker series, which are
	$p$-adic modular forms defined using the two-variable $p$-adic measure with values in $p$-adic 
	modular forms constructed by Katz.	
\end{abstract}
\author{Kenichi Bannai and Guido Kings}
\date{\today}
\maketitle

\setcounter{section}{-1}

%
%
%
%
\section{Introduction}
%
%
%
%

The elliptic polylogarithm was introduced by Beilinson and Levin in their seminal
paper \cite{BL}. There the elliptic polylogarithm appears as an element in the motivic 
cohomology of a relative elliptic curve $\pi:E\to B$ minus the zero section. 
The specializations of this element along torsion sections of the relative elliptic curve are what is known as 
\textit{Eisenstein classes}, and in their paper, they explicitly described these classes in terms of certain real analytic 
Eisenstein-Kronecker series. 

These Eisenstein classes have found numerous applications to special values of $L$-functions.
Immediate is the relation to $L$-values of CM-elliptic curves as considered by Deninger
\cite{Den} and in \cite{Ki}. Less immediate, but implicit in earlier work of Beilinson \cite{Bei},
is the relation to $L$-values of modular cusp forms, where one has to consider 
cup-products of these Eisenstein classes. This work by Beilinson has in turn
inspired Kato's work on the Birch-Swinnerton-Dyer conjecture. An application to non-critical
values of Dirichlet series is given in \cite{HK1}.

For the application to finer integrality questions about $L$-values, it is
necessary to investigate not only the Hodge realization but also the \'etale and $p$-adic realizations
of these classes. In the \'etale situation, as shown in \cite{Ki},
the elliptic polylogarithm for relative elliptic curves may be described essentially
as the Kummer map of elliptic units on the modular curve.
In the $p$-adic, or more precisely in the syntomic case, the only result known so far is the case for
single elliptic curves with complex multiplication which have good ordinary reduction at $p \geq 5$ \cite{Ba3} \cite{BKT}.
Up until now, there has not been any research concerning the relative case.

In this paper we consider the moduli space of  elliptic curves and the 
specialization of the elliptic polylogarithm of the universal elliptic curve at torsion points. 
We define the \textit{syntomic Eisenstein class} to be the rigid syntomic realization of such specializations.
Our main result, Theorem \ref{thm: main}, expresses the restriction of these classes to the ordinary locus
of the moduli space in terms of $p$-adic Eisenstein-Kronecker series, which are defined explicitly using the
$p$-adic Eisenstein measure constructed following Katz.

In complete analogy with the case for absolute Hodge cohomology,
rigid syntomic cohomology $H^{1}_\syn( \sX, \cM)$ with values in an admissible filtered
overconvergent $F$-isocrystal $\cM = (M, \nabla, F, \Phi)$ satisfying $F^0 M = 0$
may be described by pairs $(\alpha,\xi)$, where
$\alpha$ is an overconvergent section in $\Gamma(\cX_K, M_\rig)$ and $\xi$ is 
an algebraic differential form satisfying the differential equation
\begin{equation}\label{eq: alpha}
	\nabla(\alpha)=(1 - \Phi) \xi,
\end{equation}
where $\Phi$ is the Frobenius on $\cM$ (see Proposition \ref{prop: fund class} for details).
The main problem is to explicitly describe the pair $(\alpha, \xi)$ corresponding to
the syntomic Eisenstein class.

In the case of the syntomic Eisenstein class, $\xi$ is the differential form corresponding to
the de Rham Eisenstein class.  In order to determine the syntomic class, it is necessary to find
a section $\alpha$ satisfying the differential equation \eqref{eq: alpha}.
The main idea of this paper, which makes the computation elegant, is not to solve this
equation directly but to translate it to the measure space used by Katz. It is one of the main insights of
Katz, that the Gauss-Manin connection $\nabla$ acts on this measure space just as a twist by a certain
character. This insight allows us to reformulate the above differential equation into an equation on this
measure space, which may be solved explicitly.

In \cite{BL}, Beilinson and Levin constructs certain two-variable $p$-adic measures using the \'etale
realization of the elliptic polylogarithm, and in \cite{BL} 2.5.12, asks if there is a relation between
this measure and Katz's theory of $p$-adic Eisenstein series.  The syntomic Eisenstein class corresponds
to the \'etale Eisenstein class via $p$-adic Hodge theory, and our main result relating the syntomic class
to functions constructed using Katz's $p$-adic Eisenstein measure seems to be an important step towards 
answering this question.

Furthermore, since we are using $p$-adic modular forms, we may only describe the Eisenstein class on the ordinary
locus.  However, the syntomic Eisenstein class itself is defined on the entire modular curve, including over
the supersingular disc.  Explicit description of these classes over supersingular points as well as 
ramifications to the study of $p$-adic modular forms are interesting topics for further investigation.

Let us give an overview of the sections in this paper. In the first section, we review the definition of the 
Eisenstein classes in motivic and de Rham cohomology as specializations of the elliptic polylogarithm. 
In the second section, we define the Eisenstein class in rigid syntomic cohomology. In the third section, we write down 
explicit formulas for the de Rham realizations of the Eisenstein classes.  In the fourth section, we
review the modular curve parameterizing elliptic curves with trivializations, and use this curve
to calculate syntomic cohomology on the ordinary locus of the elliptic curve.
In the final section, we construct $p$-adic Eisenstein-Kronecker series 
using the $p$-adic measure of Katz.  We then use these series to describe the $p$-adic Eisenstein classes.
In the appendix, we review the theory of rigid syntomic cohomology with coefficients.

\setcounter{tocdepth}{1}
\tableofcontents

%
%
\section{Polylog and Eisenstein classes}
%
%

%
\subsection{Moduli spaces}
%

Let $N\ge 1$ be an integer.  In his paper \cite{Ka3}, Katz works systematically with
$\GarithN$-structures. Let $B$ be a ring and $E/B$ be an elliptic curve.
Consider the Weil pairing
$$
	e_N:E[N]\times E[N]\to \mu_N.
$$
A $\GarithN$-level structure of $E$ is defined to be an isomorphism
$$
	\beta:\mu_N\times \bbZ/N\bbZ\isom E[N],
$$
where $\mu_N$ is the group scheme of $N$-th roots of unity and $E[N]$ the $N$-torsion 
points of $E$, such that the Weil pairing becomes under $\beta$ the standard pairing
$$
	<(\zeta_1,n),(\zeta_2,m)>=\zeta_1^m/\zeta_2^n.
$$
Note that for $N\ge 3$ the functor ``isomorphism classes of
$\GarithN$-elliptic curves $(E,\beta)$" is represented by a smooth affine curve
$\MarithN$ over $\bbZ$ with geometrically irreducible fibers. For any ring $B$ we let
$$
	\MarithN_B:=\MarithN\otimes_{\bbZ}B
$$
be the base change of $\MarithN$.

Let us explain the connection of the $\GarithN$-structures with the more usual $\Gamma(N)$-level structure. This is an isomorphism
$$
	\alpha:\bbZ/N\bbZ\times\bbZ/N\bbZ\isom E[N],
$$
which can only exist if $N$ is invertible on $B$.
The functor "isomorphism classes of
$\Gamma(N)$-elliptic curves $(E,\alpha)$" is for $N\ge 3$ represented by the smooth 
affine curve 
$$
	\MN\isom \MarithN  \otimes_{\bbZ} \bbZ[1/N,\zeta_N],
$$
where $\zeta_N\in\mu_N$ is a primitive $N$-th root of unity. For any $\bbZ[1/N,\zeta_N]$-algebra $B$, 
we let $\MN_B$ be the base change to $B$.

Let us make the relation between $\MN$ and $\MarithN$ more explicit. If $N$ is invertible on $B$, each $\Gamma(N)$-level
structure $\alpha$ on $E/B$ gives rise to a primitive $N$-th root of unity $\det(\alpha)$
and a $\GarithN$-level structure $\beta$ defined by
$\beta(\det(\alpha)^n,m):=\alpha(n,m)$. This correspondence establishes a
bijection between the set of $\Gamma(N)$-structures on $E/B$ and the set
of pairs $(\zeta_N,\beta)$, where $\zeta_N$ is a primitive $N$-th root of
unity and $\beta$ a $\GarithN$-structure on $E/B$.

Finally, we recall the action of $\GL_2(\bbZ/N\bbZ)$ on $M(N)$. An element
$\gamma\in \GL_2(\bbZ/N\bbZ)$ acts on $\MN$ from the right as follows:
$$
	(E/B,\alpha)\mapsto (E/B,\alpha\verk\gamma).
$$

%
\subsection{The elliptic polylogarithm and the Eisenstein classes in motivic cohomology}\label{motivicEis}
%

In this section we consider the situation where we have an elliptic curve $\pi:E\to M$ over the base scheme
$M$. In the application this will be the universal elliptic 
curve over the moduli schemes $\MN$ or $\MarithN$. We will use the elliptic polylogarithm 
in this situation to define Eisenstein classes in motivic cohomology.

Let $\sElliptic^k=\sElliptic\times_{\sModular}\dots\times_{\sModular} \sElliptic$ be
the $k$-fold relative fiber product. 
On $\sElliptic^k$ we have an operation of the semi-direct
product $\mu_2^k\rtimes\frS^k$ of the $k$-fold product of $\mu_2$ with
the symmetric group in $k$ letters on  $\sElliptic^k$. Following Scholl \cite{Sch}, denote by
$\varepsilon$ the character $\varepsilon\colon\mu_2^k\rtimes\frS^k\to\mu_2$,
which is the multiplication on $\mu_2^k$ and
the sign-character on $\frS^k$.

Let  $H^{k+1}_\mot(\sElliptic^k,\bbQ(k+1))(\varepsilon)$ be the $\varepsilon$-eigen part of the motivic 
cohomology group of $H^{k+1}_\mot(\sElliptic^k,\bbQ(k+1))$.
If we suppose the existence of an adequate theory of motivic sheaves, then we would have
\begin{align*}
	H^{k+1}_\mot(\sElliptic^k,\bbQ(k+1))(\varepsilon) &= H^{1}_\mot(\sModular,\Sym^k \sH(1))\\
	&= \Ext^1_{\sModular,\mot}(\bbQ(0), \Sym^k \sH(1)),
\end{align*}
where $\sH = \bbR^1 \pi_* \bbQ(1)$.  Lacking such a theory, we will use the left hand side of 
the above equality to play the role of $H^{1}_\mot(\sModular,\Sym^k \sH(1))$.

Recall from \cite{BL} 6.4.3. that for each non-zero torsion  point $t\in\sElliptic(M)$ the motivic elliptic polylog gives a class 
$$
	t^*\pol^{k+1}_\mot\in H^{k+1}_\mot(\sElliptic^k,\bbQ(k+1))(\varepsilon).
$$
\begin{definition}
	Let $\varphi=\sum a_tt$ be a formal linear combination of non zero torsion sections $t\in\sElliptic_\tors(M)$ with 
	coefficients in $a_t\in \bbQ$, then we define the \emph{motivic Eisenstein class}
	to be
	$$
		\Eis^{k+2}_\mot(\varphi):=\sum_{t\in E[N]\setminus \{0\}}a_tt^*\pol^{k+1}_\mot\in 
		H^{k+1}_\mot(\sElliptic^k,\bbQ(k+1))(\varepsilon).
	$$
\end{definition}
Recall also from \cite{BL} 1.3.13 that $\pol^{k+1}_\mot$ is compatible with base change, hence the 
motivic Eisenstein class is also compatible with base change.

Besides the motivic Eisenstein class we will use also realizations of the Eisenstein class
in other cohomology theories. We intend no general theory, but make a simple definition in
the cases of interest to us.
\begin{definition}\label{def:motivicEisenstein}
	Let $?=\dR,\rig,\syn, \operatorname{\text{\'e}t}$ and consider the regulator map
	$$
		r_?:H^{k+1}_\mot(\sElliptic^k,\bbQ(k+1))(\varepsilon)\to H^{k+1}_?(\sElliptic^k,\bbQ(k+1))(\varepsilon),
	$$
	then the image of $\Eis^{k+2}_\mot(\varphi)$ under $r_?$ is called the \emph{Eisenstein class in $?$-cohomology}.
\end{definition}

%
\subsection{The residues of the motivic Eisenstein classes at the cusps}
%

To give explicit formulas for the Eisenstein class in de Rham cohomology we need a
formula for the residues of these classes at the cusp. The easiest thing is to give this
formula in motivic cohomology. Using the compatibility of the regulator with the residue
map, gives then the formula in any cohomology theory we use.

In this section we let $N\ge 3$ and work with the $\Gamma(N)$-moduli scheme $M=\MN$. 
The formula we are after is due to Beilinson and
Levin \cite{BL} 2.4.7. We follow the exposition of \cite{HK1}.

Let $\ol\sModular$ be the compactification of $\sModular$ and $\smcompactification{\sElliptic}$ 
the N\'eron model of  $\sElliptic$ over $\ol\sModular$ and $\smcompactification{\sElliptic}^0$ 
its connected component. 
Let $\Cusp=\ol\sModular\setminus\sModular$ be the subscheme of cusps. 
The standard $N$-gon over $\Spec\,\bbZ[1/N, \zeta_N]$ with level $N$-structure
$\bbZ/N\times \bbZ/N\to \Gm\times\bbZ/N$ via 
$(a,b)\mapsto (\zeta_N^a,b)$
{\em defines} a section $\infty\colon\Spec\,\bbZ[1/N, \zeta_N]\to \Cusp$. 
We have a diagram
\begin{equation}\label{compdiag}
	\begin{CD} 
		\sElliptic @>j>> \smcompactification{\sElliptic} @<<< \smcompactification{\sElliptic}_{\Cusp}\\
		@V\pi VV @V\smcompactification{\pi} VV @V\smcompactification{\pi} VV\\
		\sModular@>j'>> \ol\sModular @<<< \Cusp\ .
	\end{CD}
\end{equation}
As in \cite{HK1} 1.1. we define the $\mu_2$-torsor 
\begin{equation}\label{isom-defn}
	\Isom=\Isom(\Gm,\tilde{E}^0_{\Cusp})
\end{equation}
on $\Cusp$.
Over $\infty$, we have a canonical trivialization $\Isom_\infty=\mu_{2,\infty}$ 
by the very definition of $\infty$.

As in loc. cit. the localization sequence
induces a $ GL_2(\bbZ/N)$-equivariant map 
$$
	\res^k\colon H^{k+1}_\mot(\sElliptic^k,\bbQ(k+1))(\varepsilon)\to H_\mot^0(\Isom,\bbQ(0)),
$$
the {\em residue map}. The image of $\res^k$ lies in the $(-1)^k$ eigenspace of
the $\mu_2$-action.
Let ${P}:=\left\{\left(\begin{smallmatrix} \ast&\ast\\ 0&1\end{smallmatrix}\right)\right\}\subset  GL_2$, then
$$
	\Isom\isom \coprod_{P(\bbZ/N)\setminus GL_2(\bbZ/N)}\Spec\,\bbZ[1/N, \zeta_N],
$$
where $\id\in  GL_2(\bbZ/N)$ corresponds to the section $1\in\mu_{2,\infty}=\Isom_\infty$. The right
action of $ GL_2(\bbZ/N)$ on $M(N)$ extends to an action on $\Isom$ by right multiplication on
$P(\bbZ/N)\setminus GL_2(\bbZ/N)$.
Still following \cite{HK1}, we define 
\begin{multline}
	\bbQ[\Isom]^{(k)}=
	\{h \colon GL_2(\bbZ/N)\to \bbQ \mid h(ug)=h(g)\text{ for }u\in P(\bbZ/N)\\ 
	\text{and } h(-\id \, g)= (-1)^k h(g)\}
\end{multline}
the space of formal linear combination of  points of $\Isom$ on which
$\mu_2$ operates by $(-1)^k$. The group $ GL_2(\bbZ/N)$ acts on this space in the
usual way from the left by $gh(x):=h(xg)$.
Obviously, we have 
$$
	H^0_\mot(\Isom,\bbQ(0))^{(k)}= \bbQ[\Isom]^{(k)}.
$$
To compute the residue map for Eisenstein series, we need also
$$
	\bbC[(\bbZ/N\bbZ)^2]:=\{\varphi:(\bbZ/N\bbZ)^2\to \bbC\}
$$
the space of $\bbC$-valued functions. We follow the convention in
\cite{HK2} and define the left $\GL_2(\bbZ/N)$-action
by $g\varphi(x):=\varphi(g^{-1}x)$. From now on we use $\alpha$ to identify 
$$
	\alpha:(\bbZ/N\bbZ)^2\isom E[N]
$$
so that we consider functions in $\bbC[(\bbZ/N\bbZ)^2]$ as linear combinations
of torsion sections.

The calculation of the residues of the polylog may be 
formulated using the horospherical map. For this we need some notions
about $L$-functions and finite Fourier transforms.
We define, following \cite{Ka3} and \cite{HK2}, for any $\varphi\in \bbC[(\bbZ/N\bbZ)^2]$ 
two partial Fourier transforms
\begin{align}\label{partialFourier}
	P_1\varphi(m,n)&:=\sum_v\varphi(v,n)e^{2\pi i mv/N}\\
\nonumber	P_2\varphi(m,n)&:=\sum_v\varphi(m,v)e^{2\pi i nv/N}
\end{align}
and the symplectic Fourier transform
\begin{equation}\label{symplecticFourier}
	\wh\varphi(m,n):=\frac{1}{N}\sum_{u,v}\varphi(u,v)e^{2\pi i (un-mv)/N}.
\end{equation}
We let $\varphi^t(m,n):=\varphi(n,m)$ and one has the relations 
$$
	P_2(\wh\varphi^t)=P_1(\varphi)
$$ and $P_2(\varphi^t)=P_1(\varphi)^t$.
For each $\varphi\in \bbC[(\bbZ/N\bbZ)^2]$ we also define its \emph{$L$-series}
\begin{equation}\label{L-defn}
	L(\varphi,s):=\sum_{m\ge 1}\frac{\varphi(m,0)}{m^s}.
\end{equation}
This $L$-series converges for $\Re s>1$ and has a meromorphic continuation to
$\bbC$, which satisfies the functional equation
$$
	L(P_2(\varphi),1-k)=\frac{(-1)^k2N^k(k-1)!}{(2\pi i)^k}L(\varphi,k).
$$	
\begin{definition}\label{horospherical}
	The \emph{horospherical} map is the $ GL_2(\bbZ/N)$-equivariant
	map  
	$$
		\rho^k\colon\bbQ[(\bbZ/N\bbZ)^2]\to \bbQ[\Isom]^{(k)},
	$$
	which maps a function $\varphi:(\bbZ/N\bbZ)^2\to \bbQ$ to the function
	\begin{align*}
		\rho^k(\varphi)(g)&:=\frac{N^k}{k!(k+2)}\sum_{t=(t_1,t_2)\in (\bbZ/N)^2}\varphi(g^{-1}t)B_{k+2}\left(\frac{t_2}{N}\right)\\
			&=\frac{-1}{Nk!}L(P_1(g\varphi),-k-1).
	\end{align*}
	Here 
	$B_{k+2}(t_2/N)$ is the Bernoulli polynomial
	evaluated at the representative of $t_2/N\in \bbR/\bbZ$ in $[0,1)$ and the last equation
	follows from \cite{HK2} p. 333 using that $P_1(g\varphi)=P_2(\wh{g\varphi}^t)$.
\end{definition}
The following proposition is due to Beilinson-Levin and is crucial
for the connection of the elliptic polylog to Eisenstein series. We consider the residue map as
$$
	\res^k\colon H^{k+1}_\mot(\sElliptic^k,\bbQ(k+1))(\varepsilon)\to \bbQ[\Isom]^{(k)}.
$$
\begin{proposition}[\cite{BL} 2.2.3., \cite{HK1} C.1.1.]\label{respol}
	Let $M=\MN$, $k\ge 0$ and $\varphi\in \bbQ[(\bbZ/N\bbZ)^2\setminus \{0\}]$ considered as a formal linear combination of non-zero $N$-torsion sections with coefficients
	in $\bbQ$. Then, for $g\in\GL_2(\bbZ/N)$,
	$$
		\res^k(\Eis^{k+2}_\mot(\varphi))(g)=\frac{-1}{N^{k-1}}\rho^k(\varphi)(g)
	$$
	where $\rho^k$ is the horospherical map.
\end{proposition}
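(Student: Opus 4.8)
The argument follows Beilinson--Levin (following the exposition in \cite{HK1}) and rests on the degeneration of the elliptic polylogarithm at the cusp $\infty$. First one reduces to a single cusp and a single section: both $\res^k$ and the horospherical map $\rho^k$ of Definition \ref{horospherical} are $\GL_2(\bbZ/N)$-equivariant, and an element of $\bbQ[\Isom]^{(k)}$ is determined by its values on representatives of $P(\bbZ/N)\setminus\GL_2(\bbZ/N)$, so it suffices to verify the identity after evaluating at $g=\id$, i.e.\ to compute the residue of $\Eis^{k+2}_\mot(\varphi)$ at the distinguished cusp $\infty$; and by linearity in $\varphi$ one may assume $\varphi$ is the characteristic function of a single non-zero section $t=\alpha(a,b)$, so that $\Eis^{k+2}_\mot(\varphi)=t^*\pol^{k+1}_\mot$.

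The geometric heart of the proof is the degeneration at $\infty$. Over the punctured formal disc around $\infty$ the curve $\sElliptic$ is the Tate curve $\Gm/q^{\bbZ}$ with the standard level structure $(a,b)\mapsto\zeta_N^aq^{b/N}$, so the connected component of the special fibre of $\smcompactification{\sElliptic}$ is $\Gm$ and $t=\alpha(a,b)$ degenerates to a point of ``$q$-height'' $b/N$ on the standard $N$-gon. The residue map $\res^k$ is the boundary map of the localization sequence for $j'\colon\sModular\hookrightarrow\ol\sModular$ of diagram \eqref{compdiag}, applied to the relevant weight-graded piece of $\Sym^k\sH(1)$ along $\Cusp$. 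Using the base-change compatibility of $\pol^{k+1}_\mot$ (recalled after the definition of $\Eis^{k+2}_\mot$) together with the known description of how the logarithm pro-sheaf $\sLog$ and the polylogarithm degenerate along the boundary --- this is precisely \cite{BL} 2.2.3 and \cite{HK1} C.1.1 --- one reduces the computation of $\res^k(t^*\pol^{k+1}_\mot)$ at $\infty$ to a classical computation with the polylogarithm of $\Gm$ periodized over $q^{\bbZ}$, evaluated at the degenerate section $\zeta_N^aq^{b/N}$.

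It remains to run that classical computation and to collect the constants. Carrying out the residue ``in the $q$-direction'' amounts to a regularized sum $\sum_{n\in\bbZ}$ of coefficients of the $\Gm$-polylogarithm along the orbit $\zeta_N^aq^{b/N+n}$; this produces a Hurwitz-type zeta value at $s=-k-1$, hence, up to an explicit rational factor (bookkeeping the Tate twists, the $k!$ in the normalization of $\sLog^{(k)}$, and the weight $k+2$), the value $B_{k+2}(b/N)$ of a Bernoulli polynomial --- the root of unity $\zeta_N^a$ contributing only a trivial phase to the constant term. This matches the first expression for $\rho^k$ in Definition \ref{horospherical}; the equivalent $L$-series form $L(P_1(g\varphi),-k-1)$ follows from it by the finite Fourier transforms \eqref{partialFourier} and the functional equation for $L(\varphi,s)$ recorded before Definition \ref{horospherical}. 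Summing over the components of the $N$-gon and over the torsion sections, and tracking the factor $N^{k-1}$ that arises from rescaling the standard level structure $(a,b)\mapsto(\zeta_N^a,b)$ to the normalization implicit in $\pol^{k+1}_\mot$, yields exactly $\res^k(\Eis^{k+2}_\mot(\varphi))(g)=\tfrac{-1}{N^{k-1}}\rho^k(\varphi)(g)$.

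The main obstacle is the degeneration step: identifying the boundary behaviour of the elliptic polylogarithm with the ($q$-periodized) polylogarithm of $\Gm$ is a statement about the degeneration of pro-unipotent sheaves and their cohomology, and --- because the assertion is an exact identity with a prescribed rational constant --- one must keep track of every normalization factor and of which of the Fourier transforms in \eqref{partialFourier}--\eqref{symplecticFourier} intervenes, including the interaction of the $\varepsilon$-eigenspace projection with the decomposition of the $N$-gon into components. The reductions of the first step are formal (equivariance, linearity, and the compatibility of $\pol^{k+1}_\mot$ with base change and with localization), and the final computation is the classical one relating the cyclotomic polylogarithm to Bernoulli numbers and to special values of Dirichlet-type series.
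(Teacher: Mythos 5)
Your proposal takes a genuinely different route from the paper, and the difference matters. The paper does \emph{not} redo the geometric degeneration computation at all: its proof is a two-step transfer argument. It invokes the commutative diagram
$$
	\begin{CD}
		H^{k+1}_\mot(\sElliptic^k,\bbQ(k+1))(\varepsilon)@>\res^k>> H^0_\mot(\Isom,\bbQ(0))^{(k)}\\
		@Vr_{\et}VV@VV r_{\et} V\\
		H^{k+1}_{\et}(\sElliptic^k,\bbQ_l(k+1))(\varepsilon)@>\res^k>> H^0_{\et}(\Isom,\bbQ_l(0))^{(k)}
	\end{CD}
$$
together with the fact that the right-hand vertical regulator is an isomorphism after $\otimes\,\bbQ_l$; since the residue of the \emph{\'etale} Eisenstein class is already computed in \cite{HK1}, the injectivity of that arrow forces the motivic residue to be $\tfrac{-1}{N^{k-1}}\rho^k(\varphi)$. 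What this buys is that every normalization issue you worry about in your last paragraph is outsourced to the reference, and the only thing one has to check is the compatibility of the regulator with localization sequences. What your route would buy, if completed, is a self-contained computation independent of the \'etale theory; but as written it is essentially a sketch of the proof \emph{inside} \cite{BL} 2.2.3 and \cite{HK1} C.1.1, i.e.\ of the very results the proposition is citing.

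The gap in your version is concentrated exactly where the proposition has content: the constant. The statement is an exact identity, and your argument defers the determination of $\tfrac{-1}{N^{k-1}}$ to ``bookkeeping the Tate twists, the $k!$, \dots'' and to ``tracking the factor $N^{k-1}$ that arises from rescaling the standard level structure.'' None of these factors is actually computed, and the claimed origin of the $N^{k-1}$ (a rescaling of the level structure) is not substantiated; in \cite{HK1} this constant emerges from the precise normalization of the polylogarithm extension class and of the splitting principle for the logarithm sheaf, not from the level structure. Similarly, the reduction of the Hurwitz-type regularized sum to $B_{k+2}(b/N)$, the role of the $\varepsilon$-projector on the components of the $N$-gon, and the passage between the Bernoulli and $L$-series forms of $\rho^k$ are each asserted rather than verified. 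Since the outline is otherwise sound, the cleanest repair is either to carry out that bookkeeping in full, or to do what the paper does: quote the \'etale computation and transfer it through the regulator diagram, using that $r_\et$ is an isomorphism on $H^0_\mot(\Isom,\bbQ(0))^{(k)}\otimes\bbQ_l$.
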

\begin{proof}
	This theorem is proved in \cite{HK1} in \'etale cohomology. The above statement follows from the commutative 
	diagram for the \'etale regulator
	$$
		\begin{CD}
			H^{k+1}_\mot(\sElliptic^k,\bbQ(k+1))(\varepsilon)@>\res^k>> H^0_\mot(\Isom,\bbQ(0))^{(k)}\\
			@Vr_{\et}VV@V\isom Vr_{\et} V\\
			H^{k+1}_{\et}(\sElliptic^k,\bbQ_l(k+1))(\varepsilon)@>\res^k>> H^0_{\et}(\Isom,\bbQ_l(0))^{(k)}
		\end{CD}
	$$
	and the fact that the right vertical arrow is an isomorphism $\otimes \bbQ_l$.
\end{proof}

%
\subsection{Eisenstein classes in de Rham cohomology}
%

In this section we let $M=\MN$ and $\pi:E\to M$ the universal elliptic curve.

We define $\sH$ to be the coherent module with connection on $M$ defined as the higher direct image 
$$
	\sH = R^1 \pi_{\dR *} \cO_{E} := R^1\pi_* \left[ \cO_{E} \xrightarrow d \Omega^1_{E/M} \right]
$$
with the Gauss-Manin connection $\nabla:\sH\to \sH\otimes\Omega^1_M$. As usual we also define the coherent subsheaf of $\sH$
\begin{equation}\label{ulomegadefn}
	\ul\omega:=\pi_*\Omega^1_{E/M}.
\end{equation}
Then the natural inclusion $\ul\omega^{\otimes k} \hookrightarrow \Sym^k \sH$ defines a map
$$
	\Gamma( M,       \ul\omega^{\otimes k} \otimes \Omega^1_{M} ) 
	\hookrightarrow H^1_\dR( M, \Sym^k \sH),
$$
whose image defines the first Hodge filtration $F^1$ on $H^1_\dR( M, \Sym^k \sH)$.
If the scheme $M$ is $\MN_{\bbQ}$, the diagram (\ref{compdiag}) and the projector $\varepsilon$ define a localization 
sequence in de Rham cohomology:
$$
0 \rightarrow H_\dR^{k+1}(\ol E^k)(\varepsilon) \rightarrow H^{k+1}_\dR(E^k)(\varepsilon) 
	\rightarrow H^k_\dR(\ol E^k_\Cusp)(\varepsilon) \rightarrow 0.
$$
A standard argument with the Leray sequence and K\"unneth formula for 
de Rham cohomology gives the following.
\begin{lemma}\label{pro: dR sym}
	For $M=\MN_{\bbQ}$, we have isomorphisms
	\begin{align*}
		H^0_\dR(M, \Sym^k \sH(1))   &\isom H^{k}_\dR(E^k)(\varepsilon)  \\
		H^1_\dR(M, \Sym^k \sH(1)) &\isom  H^{k+1}_\dR(E^k)(\varepsilon) 
	\end{align*}
	and 
	\begin{align*}
		H^k_\dR(\ol E^k_\Cusp)(\varepsilon)&\isom H^0_\dR(\Isom)^{(k)}.
	\end{align*}
\end{lemma}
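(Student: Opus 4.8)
The plan is to compute both sides with the Leray spectral sequence of the projections $\pi^k\colon E^k\to M$ and $\ol\pi^k\colon \ol E^k_\Cusp\to\Cusp$, together with the relative K\"unneth formula, and then extract the $\varepsilon$-eigenpart. The one input that is not purely formal is the action of $\mu_2^k\rtimes\frS^k$ on the relative de Rham cohomology. Since $E/M$ is smooth proper of relative dimension one, $R^0\pi_{\dR*}\cO_E=\cO_M$, $R^1\pi_{\dR*}\cO_E=\sH$, and $R^2\pi_{\dR*}\cO_E$ is an invertible sheaf (a Tate twist of $\cO_M$), and the nontrivial element of the $j$-th $\mu_2$, acting by inversion in the $j$-th coordinate, acts on $R^i\pi_{\dR*}\cO_E$ by $(-1)^i$. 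Relative K\"unneth gives $R^m\pi^k_{\dR*}\cO_{E^k}\isom\bigoplus_{i_1+\dots+i_k=m}\bigotimes_j R^{i_j}\pi_{\dR*}\cO_E$, with $\mu_2^k$ acting through $(-1)^{i_j}$ on the $j$-th factor and $\frS^k$ permuting factors with the Koszul sign. Hence the $\varepsilon$-eigenpart forces $(-1)^{i_j}=-1$, i.e. $i_j=1$, for every $j$, so $m=k$; and on $\sH^{\otimes k}$ the requirement that $\frS^k$ act through its sign cancels the Koszul sign of a transposition and selects the naive permutation invariants, so $(R^m\pi^k_{\dR*}\cO_{E^k})(\varepsilon)=\Sym^k\sH$ for $m=k$ and $0$ otherwise. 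This is Scholl's $\varepsilon$-projector computation.

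Next, since $M=\MN_\bbQ$ is a smooth affine curve over $\bbQ$, the de Rham complex of any $\cO_M$-module with connection has length one, so $H^p_\dR(M,-)=0$ for $p\ge 2$; hence the Leray spectral sequence $E_2^{p,q}=H^p_\dR(M,R^q\pi^k_{\dR*}\cO_{E^k})\Rightarrow H^{p+q}_\dR(E^k)$ has only the columns $p=0,1$ and degenerates at $E_2$. This yields $\mu_2^k\rtimes\frS^k$-equivariant short exact sequences
\[
0\to H^1_\dR(M,R^{n-1}\pi^k_{\dR*}\cO_{E^k})\to H^n_\dR(E^k)\to H^0_\dR(M,R^n\pi^k_{\dR*}\cO_{E^k})\to 0 .
\]
Since the $\varepsilon$-eigenspace projection is exact (an idempotent of $\bbQ[\mu_2^k\rtimes\frS^k]$) and commutes with $H^\ast_\dR(M,-)$, taking $\varepsilon$-eigenparts replaces each $R^q\pi^k_{\dR*}\cO_{E^k}$ by its $\varepsilon$-eigenpart, which by the first paragraph is $\Sym^k\sH$ for $q=k$ and $0$ otherwise. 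So the sequence for $n=k$ collapses to $H^k_\dR(E^k)(\varepsilon)\isom H^0_\dR(M,\Sym^k\sH)$ and that for $n=k+1$ to $H^{k+1}_\dR(E^k)(\varepsilon)\isom H^1_\dR(M,\Sym^k\sH)$; restoring the Tate twist $(1)$, which in the de Rham realization only shifts the weight, gives the first two isomorphisms.

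For the third isomorphism, $\Cusp$ is a finite disjoint union of spectra of finite \'etale $\bbQ$-algebras, so $H^{\ge 1}_\dR(\Cusp,-)=0$ and, by the Leray sequence for $\ol\pi^k$ and relative K\"unneth, $H^k_\dR(\ol E^k_\Cusp)(\varepsilon)$ is the $\varepsilon$-eigenpart of $H^0_\dR(\Cusp,R^k\ol\pi^k_{\dR*}\cO)$. The fibre of $\ol\pi$ over a cusp is an $N$-gon, whose extra components enlarge only $R^2$ and higher and hence enter $H^m$ only for $m>k$; so in degree $k$, following \cite{HK1}, the same K\"unneth-plus-$\varepsilon$ argument reduces the relevant piece to the $k$-th tensor power of the rank-one sheaf $R^1\ol\pi_{\dR*}\cO|_\Cusp$ attached to the identity component $\tilde E^0_\Cusp$ --- a form of $\Gm$ whose relative $H^1_\dR$ is the invertible sheaf $\cO_\Cusp\cdot\tfrac{dt}{t}$, on which inversion acts by $-1$. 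Thus $H^k_\dR(\ol E^k_\Cusp)(\varepsilon)\isom H^0_\dR(\Cusp,(R^1\ol\pi_{\dR*}\cO|_\Cusp)^{\otimes k})$. Finally $\tilde E^0_\Cusp$ becomes canonically isomorphic to $\Gm$ after pullback along the $\mu_2$-torsor $\Isom=\Isom(\Gm,\tilde E^0_\Cusp)$, so $R^1\ol\pi_{\dR*}\cO|_\Cusp$ pulls back to $\cO_\Isom\cdot\tfrac{dt}{t}$ with $\mu_2$ acting through $t\mapsto t^{-1}$, i.e. by the sign character; descending, $H^0_\dR(\Cusp,(R^1\ol\pi_{\dR*}\cO|_\Cusp)^{\otimes k})$ is the $(-1)^k$-eigenspace of $\mu_2$ on $H^0_\dR(\Isom)$, which is $H^0_\dR(\Isom)^{(k)}$ by definition.

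The main obstacle is bookkeeping rather than substance: keeping the Scholl/Koszul sign straight so that the $\varepsilon$-eigenpart is $\Sym^k\sH$ and not $\Alt^k\sH$; checking that in cohomological degree $k$ the cohomology of the $N$-gon fibre genuinely reduces to that of its torus part $\tilde E^0_\Cusp$, so that the higher relative cohomology of the full N\'eron fibre plays no role; and matching the Tate twists, the $\mu_2$-equivariance and the sign conventions with those fixed in \cite{HK1} and \cite{Sch}.
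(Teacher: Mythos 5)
Your argument is correct and is precisely the ``standard argument with the Leray sequence and K\"unneth formula'' that the paper invokes without writing out: the Scholl $\varepsilon$-projector computation identifying $(R^\bullet\pi^k_{\dR*}\cO_{E^k})(\varepsilon)$ with $\Sym^k\sH$ in degree $k$, the two-column Leray degeneration over the affine curve $M$, and the reduction at the cusps to the torus part of the N\'eron fibre descended along the $\mu_2$-torsor $\Isom$. The only points needing care are exactly the ones you flag (Koszul signs giving $\Sym^k$ rather than $\Alt^k$, and the handling of the non-identity components of the N\'eron fibre, for which the cited computation in \cite{HK1} is the reference), so nothing is missing.
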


\begin{remark}\label{rem: vanishing}
		 It is known that
		$$
			H^{k}_\dR(E^k)(k)(\varepsilon)  \cong H^0_\dR(M, \Sym^k \sH) = 0.
		$$
		We will prove a version of this statement for rigid cohomology on the ordinary locus in \S \ref{section: 4-3}.
\end{remark}

We denote the resulting residue map
$$
	\res^k:H^1_\dR(M, \Sym^k \sH(1))\to H^0_\dR(\Isom)^{(k)}
$$
again by $\res^k$.  As a consequence, the regulator $r_\dR$ from motivic to de Rham cohomology induces 
in the case where $M=\MN_{\bbQ}$ a commutative diagram
\begin{equation}
	\begin{CD}
		H^{k+1}_\mot(E^k,\bbQ(k+1))(\varepsilon)@>{\res^k}>>   H^0_\mot(\Isom,\bbQ(0))^{(k)}\\
		@Vr_\dR VV @VVr_\dR V \\
		H^1_\dR(M, \Sym^k \sH(1)) @>{\res^k}>>   H^0_\dR(\Isom)^{(k)}. 
	\end{CD}
\end{equation}
Let us recall the definition of the Eisenstein class in de Rham cohomology:

\begin{definition}\label{deRhamEis}
	Let $k\ge 0$ and $\varphi$ be a formal linear combination of non-zero torsion sections with 
	coefficients in $\bbQ$.
	The \emph{de Rham Eisenstein class} $\Eis^{k+2}_\dR(\varphi)\in H^1_\dR(M, \Sym^k \sH(1))$ is
	the image of $\Eis^{k+2}_\mot(\varphi)$ under the regulator map
	$$
		r_\dR:H^{k+1}_\mot(E^k,\bbQ(k+1))(\varepsilon)\to H^1_\dR(M, \Sym^k \sH(1)).
	$$
\end{definition}
In particular, the de Rham Eisenstein class $\Eis^{k+2}_\dR(\varphi)$ lies in the zeroth step of
the Hodge filtration
\begin{equation}\label{eq: EisFil0}
	\Eis^{k+2}_\dR(\varphi)\in F^0H^1_\dR(M,\Sym^k \sH(1))=\Gamma( M,\ul\omega^{\otimes k}\otimes\Omega^1_{M}).
\end{equation}
The formula in \ref{respol} gives:
\begin{corollary}\label{cor: resEisdR}
	Let $M=\MN$, $k\ge 0$ and $\varphi\in \bbQ[(\bbZ/N\bbZ)^2\setminus \{0\}]$. Then 
	the Eisenstein class in de Rham cohomology satisfies
	$$
		\res^k(\Eis^{k+2}_\dR(\varphi))(g) = \frac{-1}{N^{k-1}}\rho^k(\varphi)(g)
	$$
	for any $g \in \GL_2(\bbZ/N)$.
\end{corollary}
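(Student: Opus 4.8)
The plan is to deduce the statement directly from the motivic residue formula of Proposition~\ref{respol} by pushing it through the de Rham regulator, using the compatibility of $r_\dR$ with the two residue maps recorded in the commutative square displayed just before Definition~\ref{deRhamEis}. That square itself reflects the general compatibility of regulators with the localization sequences defining both residue maps, together with the identifications collected in Lemma~\ref{pro: dR sym}, so I would take it as given.

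Concretely, I would first invoke Definition~\ref{deRhamEis} to write $\Eis^{k+2}_\dR(\varphi)=r_\dR\big(\Eis^{k+2}_\mot(\varphi)\big)$ in $H^1_\dR(M,\Sym^k\sH(1))$, and then apply the commutative square to get
$$
\res^k\big(\Eis^{k+2}_\dR(\varphi)\big)=\res^k\big(r_\dR(\Eis^{k+2}_\mot(\varphi))\big)=r_\dR\big(\res^k(\Eis^{k+2}_\mot(\varphi))\big).
$$
By Proposition~\ref{respol}, the innermost term is the element of $H^0_\mot(\Isom,\bbQ(0))^{(k)}=\bbQ[\Isom]^{(k)}$ whose value at $g\in\GL_2(\bbZ/N)$ is $\tfrac{-1}{N^{k-1}}\rho^k(\varphi)(g)$. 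It then remains to understand $r_\dR$ on the target. Since $\Isom$ is a disjoint union of copies of $\Spec\bbZ[1/N,\zeta_N]$ indexed by $P(\bbZ/N)\backslash\GL_2(\bbZ/N)$, the groups $H^0_\mot(\Isom,\bbQ(0))^{(k)}$ and $H^0_\dR(\Isom)^{(k)}$ both carry the function-theoretic description in terms of $\bbQ[\Isom]^{(k)}$, and with respect to it $r_\dR$ is the evident map sending the class of a connected component to the class of the same component; in particular it fixes the rational-valued function $\tfrac{-1}{N^{k-1}}\rho^k(\varphi)$. Feeding this back into the display gives the asserted formula.

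The only point that is not purely formal is this last identification: one must check that the de Rham realization respects the description of $H^0$ of a zero-dimensional scheme as the module freely generated by its connected components, compatibly with the Tate twist and with the K\"unneth and Leray identifications built into Lemma~\ref{pro: dR sym} and into the normalization $H^0_\dR(\ol E^k_\Cusp)(\varepsilon)\isom H^0_\dR(\Isom)^{(k)}$. This is a routine but somewhat tedious normalization verification; once it is in place the corollary is immediate, since the substantive content --- the actual evaluation of the residue in terms of Bernoulli polynomials and special values of $L$-series --- is already contained in Proposition~\ref{respol} and in the definition of the horospherical map $\rho^k$.
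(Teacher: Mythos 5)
Your proposal is correct and follows exactly the paper's route: the corollary is stated there as an immediate consequence of the commutative square relating $\res^k$ to $r_\dR$ together with Proposition~\ref{respol}, with the identification of $r_\dR$ on $H^0$ of the zero-dimensional scheme $\Isom$ left implicit just as you describe.
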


%
%
\section{Eisenstein class in syntomic cohomology}
%
%

%
\subsection{Definition of the Eisenstein class}
%

In this section, we define the Eisenstein class in syntomic cohomology.
Suppose $K$ is a finite extension of $\bbQ_p$ with ring of integers $\cO_K$, and let 
$\sV = \Spec\, \cO_K$.   Then for any smooth scheme $X$ over $\sV$, Amnon Besser defined the rigid 
syntomic group
$
	H^m_\syn(X, n)
$
independent of any auxiliary data for $X$ and a syntomic regulator map
$$
	r_\syn: H^m_{\mot}(X, n) \rightarrow H^m_\syn(X, n)
$$
(\cite{Bes1} Theorem 7.5.)  We will use the above regulator map to define the syntomic Eisenstein class.
In what follows, we let $K = \bbQ_p$ and $\sV = \Spec\, \bbZ_p$.  

Let $N$ be an integer $\geq 3$ prime to $p$ and $M = M(N)_{\bbZ_p}$  the extension of $M(N)$ to $\bbZ_p$.  
Furthermore, we let $\pi : E \rightarrow M$ be the universal elliptic curve over $M$, and we denote by 
$E^k$ the $k$-fold fiber product of $E$ over $M$.  

\begin{definition}\label{def: syntomic Eisenstein}
	As in Definition \ref{def:motivicEisenstein}, we define the syntomic Eisenstein class $\Eis^{k+2}_\syn(\varphi)$
	to be the image by the syntomic regulator
	$$
		r_\syn: H^{k+1}_\mot(E^k, \bbQ(k+1))(\epsilon) \rightarrow H^{k+1}_\syn(E^k, k+1)(\epsilon)
	$$
	of the motivic Eisenstein class $\Eis^{k+2}_\mot(\varphi)$.
\end{definition}

By construction (\cite{Bes1} Theorem 7.5), the syntomic regulator map is compatible with the de Rham regulator map.  
Hence we have the following.
\begin{lemma}\label{lem: syn to dR}
	The syntomic Eisenstein class  $\Eis^{k+2}_\mot(\varphi)$ maps to the de Rham Eisenstein class 
	 $\Eis^{k+2}_\dR(\varphi)$ through the boundary map
	\begin{equation}\label{eq: syn to dR}
		H^{k+1}_\syn(E^k, k+1)(\epsilon)  \rightarrow H^{k+1}_\dR(E^k_{\bbQ_p})(\epsilon).
	\end{equation}
\end{lemma}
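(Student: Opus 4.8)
The plan is to deduce the statement by a diagram chase from the compatibility of Besser's rigid syntomic regulator with the de Rham regulator (already recorded just before the lemma, from \cite{Bes1} Theorem 7.5), combined with the base-change compatibility of the motivic Eisenstein class. First I would write out the relevant commutative square: for the smooth $\bbZ_p$-scheme $E^k$, Besser's regulator and the de Rham regulator fit into
\begin{equation*}
	\begin{CD}
		H^{k+1}_\mot(E^k,\bbQ(k+1)) @>r_\syn>> H^{k+1}_\syn(E^k,k+1)\\
		@| @VVV\\
		H^{k+1}_\mot(E^k,\bbQ(k+1)) @>r_\dR>> H^{k+1}_\dR(E^k_{\bbQ_p}),
	\end{CD}
\end{equation*}
where the right vertical arrow is precisely the boundary map \eqref{eq: syn to dR}. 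All three corners carry the action of $\mu_2^k\rtimes\frS^k$ and all maps are equivariant, so the square descends to the $(\varepsilon)$-eigenspaces. Evaluating the square on $\Eis^{k+2}_\mot(\varphi)\in H^{k+1}_\mot(E^k,\bbQ(k+1))(\varepsilon)$: the composite right-then-down sends it to the image of $\Eis^{k+2}_\syn(\varphi)$ under \eqref{eq: syn to dR} (by Definition \ref{def: syntomic Eisenstein}), while the composite down-then-$r_\dR$ sends it to $r_\dR(\Eis^{k+2}_\mot(\varphi))$ computed over $\bbQ_p$. Commutativity identifies the two.

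It then remains to recognize $r_\dR(\Eis^{k+2}_\mot(\varphi))$ over $\bbQ_p$ as the de Rham Eisenstein class of Definition \ref{deRhamEis}. That class was defined over $\MN_{\bbQ}$, so I would invoke the base-change compatibility of $\pol^{k+1}_\mot$, hence of $\Eis^{k+2}_\mot(\varphi)$ (\cite{BL} 1.3.13, as recalled in \S\ref{motivicEis}), together with functoriality of $r_\dR$ along $\MN_{\bbQ}\to\MN_{\bbQ_p}$. This shows that $r_\dR(\Eis^{k+2}_\mot(\varphi))$ over $\bbQ_p$ is the image of $\Eis^{k+2}_\dR(\varphi)$ under the base-change map $H^{k+1}_\dR(E^k_{\bbQ})(\varepsilon)\to H^{k+1}_\dR(E^k_{\bbQ_p})(\varepsilon)$, which under the isomorphism of Lemma \ref{pro: dR sym} (base changed to $\bbQ_p$) is exactly $\Eis^{k+2}_\dR(\varphi)\in H^1_\dR(\MN_{\bbQ_p},\Sym^k\sH(1))$. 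Combining with the previous paragraph gives the lemma.

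The one point requiring genuine care — and the step I expect to be the main obstacle — is pinning down that the ``de Rham realization'' targeted by Besser's syntomic regulator is literally the algebraic de Rham cohomology $H^{k+1}_\dR(E^k_{\bbQ_p})$ occurring in \eqref{eq: syn to dR}, and that the auxiliary identifications on the target (the $\varepsilon$-eigenspace projection, the Tate twist $k+1\mapsto (1)$ on $\Sym^k$, and the isomorphism of Lemma \ref{pro: dR sym}) are the same on the syntomic and de Rham sides. Once both regulators are placed in a single commutative square with the correctly matched target, the argument is a pure diagram chase and the base-change bookkeeping described above; no further computation is needed here, the explicit description of $\Eis^{k+2}_\dR(\varphi)$ being deferred to \S\ref{section: 4-3} and the de Rham section.
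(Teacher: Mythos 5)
Your proposal is correct and follows exactly the paper's (very short) argument: the lemma is deduced from the compatibility of Besser's syntomic regulator with the de Rham regulator (\cite{Bes1} Theorem 7.5), recorded in the sentence immediately preceding the lemma, plus the base-change compatibility of the motivic Eisenstein class. The extra bookkeeping you supply (the $\varepsilon$-equivariance and the passage from $\bbQ$ to $\bbQ_p$) is left implicit in the paper but is the right way to fill in the details.
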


The purpose of this paper is to explicitly describe the syntomic Eisenstein class $\Eis^{k+2}_\syn(\varphi)$,
restricted to the ordinary locus of $M$, in terms of $p$-adic Eisenstein series.    We first describe 
$H^{k+1}_\syn(E^k, k+1)(\epsilon)$ in terms of rigid syntomic cohomology with coefficients.

The theory of rigid syntomic cohomology with coefficients was developed in \cite{Ba1}, and a review of this 
theory is given in the Appendix of this paper.   When the smooth $\sV$-scheme $X$ is part of a smooth pair 
$\sX = (X, \ol X)$, then the syntomic cohomology of Besser corresponds to rigid syntomic with coefficients in 
Tate objects, and we have an isomorphism
$$
	H^m_{\syn}(X, n)  \cong H^m_\syn(\sX, \bbQ_p(n)),
$$
where the right hand side is rigid syntomic cohomology of $\sX$ with coefficients in the Tate object $\bbQ_p(n)$.

If we let $\ol M$ be a smooth compactification of $M$ over $\bbZ_p$, then $\sM = (M, \ol M)$  is a smooth pair.
We let $\ol E$ be the N\'eron model of $E$ over $\ol M$.  Although $\ol E$ is not smooth over $\ol M$, it is smooth over  
$\Spec\, \bbZ_p$.  Hence $\sE = (E, \ol E)$ is a smooth pair such that the morphism
$$
	\pi : \sE \rightarrow \sM
$$
is proper and smooth.    We let $\ol E^k := \ol E \times_{\ol M} \cdots \times_{\ol M} \ol E$ be the $k$-fold fiber product
of $\ol E$ over $\ol M$.   This variety is not smooth over $\bbZ_p$ when $k>1$. 
We denote by $\wt E^k$ the Kuga-Sato variety, which is a 
canonical desingularization of $\ol E^k$ defined in \cite{Del1} and \cite{Sch}.  Then $\sE^k = (E^k, \wt E^k)$ is a smooth pair.  

\begin{definition}\label{def: H syn}
	We define $\sH$ to be the filtered overconvergent $F$-isocrystal
	$$
		\sH := R^1 \pi_*\bbQ_p(1)
	$$
	on $S(\sM)$, where the higher direct image is defined as in Definition \ref{definition: HDI}.
\end{definition}
Again as in Lemma \ref{pro: dR sym}, standard argument with the Leray sequence and Kunneth formula
for de Rham and rigid cohomology gives the following.

\begin{lemma}\label{pro: rig sym}
	We have isomorphisms
	\begin{align*}
		H^0_\rig(\sM, \Sym^k \sH) &\cong H^{k}_\rig(\sE^k)(k)(\varepsilon),\\
		H^1_\rig(\sM, \Sym^k \sH) &\cong H^{k+1}_\rig(\sE^k)(k)(\varepsilon),
	\end{align*}
	which are compatible with the Frobenius and the Hodge filtration.
\end{lemma}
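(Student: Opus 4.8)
\subsection*{Proof proposal}

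The plan is to reproduce, in rigid cohomology with coefficients, the argument indicated for Lemma~\ref{pro: dR sym}: combine the Leray spectral sequence of the structure map $\pi^{(k)}\colon \sE^k\to\sM$ with the relative K\"unneth formula, and then pass to the $\varepsilon$-isotypic part. All of the input here --- higher direct images along a proper smooth morphism of smooth pairs (Definition~\ref{definition: HDI}), a relative K\"unneth formula, and a Leray spectral sequence, each in the category of filtered overconvergent $F$-isocrystals --- is part of the formalism of \cite{Ba1} recalled in the Appendix; we only need to feed in the geometry of the universal elliptic curve and its fibre powers.

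The key computation is that of the $\varepsilon$-part of the higher direct images. By relative K\"unneth, $R^q\pi^{(k)}_*\bbQ_p$ is the direct sum over $q_1+\dots+q_k=q$ of the tensor products $R^{q_1}\pi_*\bbQ_p\otimes\dots\otimes R^{q_k}\pi_*\bbQ_p$, compatibly with Frobenius, Hodge filtration and the action of $\mu_2^k\rtimes\frS^k$, where $R^0\pi_*\bbQ_p=\bbQ_p$, $R^1\pi_*\bbQ_p=\sH(-1)$ and $R^2\pi_*\bbQ_p=\bbQ_p(-1)$. The projector $\varepsilon$ is an idempotent in $\bbQ[\mu_2^k\rtimes\frS^k]$, hence acts exactly on $\bbQ_p$-linear objects. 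Since the $i$-th copy of $\mu_2$ acts on $R^{q_i}\pi_*\bbQ_p$ by $(-1)^{q_i}$, the $\varepsilon$-part vanishes unless every $q_i$ is odd, which forces $q_i=1$ for all $i$ and $q=k$; on $(R^1\pi_*\bbQ_p)^{\otimes k}=\sH(-1)^{\otimes k}$ the group $\frS^k$ acts by permuting tensor factors with the Koszul sign of the permutation, so its $\operatorname{sgn}$-eigenspace is exactly the subspace of symmetric tensors. Hence $(R^q\pi^{(k)}_*\bbQ_p)(\varepsilon)\cong(\Sym^k\sH)(-k)$ for $q=k$ and is zero otherwise. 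Substituting into the $\varepsilon$-part of the Leray spectral sequence $E_2^{p,q}=H^p_\rig(\sM,R^q\pi^{(k)}_*\bbQ_p)\Rightarrow H^{p+q}_\rig(\sE^k)$ --- which is exact because $\varepsilon$ is an exact projector --- leaves only the row $q=k$, so the sequence degenerates and $H^{p+k}_\rig(\sE^k)(\varepsilon)\cong H^p_\rig(\sM,(\Sym^k\sH)(-k))$ for every $p$. Taking $p=0,1$ and twisting by $\bbQ_p(k)$ gives the two asserted isomorphisms. Compatibility with Frobenius is automatic since every map used is a morphism of $F$-isocrystals, and compatibility with the Hodge filtration follows because the same construction in de Rham cohomology yields the isomorphisms of Lemma~\ref{pro: dR sym}, which respect $F^\bullet$, the two pictures being matched by the comparison isomorphism recalled in the Appendix.

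The point that requires genuine care is the applicability of this formalism to $\pi\colon\sE^k\to\sM$. For $k>1$ the N\'eron product $\ol E^k$ is singular over $\bbZ_p$, and one works with the Kuga--Sato desingularization $\wt E^k$; following Deligne and Scholl one must check that $\wt E^k$ lies over $\ol M$ in such a way that the $\varepsilon$-part of $H^*_\rig(\sE^k)$ is still computed by the open smooth fibration $E^k\to M$, and that the smooth-pair structure endows the cohomology with the Frobenius and the filtration in their correct normalizations. Since overconvergent rigid cohomology is insensitive to the choice of compactification, the cohomological computation itself reduces cleanly to the fibration $E^k\to M$; it is only the bookkeeping of weights and of $F^\bullet$ --- and hence the last two compatibility assertions of the lemma --- that genuinely needs the smooth pairs $\sE^k$ and $\sM$.
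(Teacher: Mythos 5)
Your proposal is correct and follows exactly the route the paper itself indicates: the paper gives no detailed proof, stating only that the lemma follows ``as in Lemma \ref{pro: dR sym}'' from the standard argument with the Leray spectral sequence and the K\"unneth formula, which is precisely the computation you carry out (the $\varepsilon$-projector killing all rows except $q=k$, where it cuts out $(\Sym^k\sH)(-k)$). Your added care about the Kuga--Sato desingularization and the normalization of Frobenius and filtration is a reasonable elaboration of what the paper leaves implicit.
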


\begin{remark}
		 Implicit in Lemma \ref{pro: rig sym} is the fact that the canonical map 
		 $$
			 H^m_\dR(\sM, \Sym^k \sH) \xrightarrow\cong H^m_\rig(\sM, \Sym^k \sH)
		$$
		is an isomorphism for any integer $m \geq 0$.  We also have
		$$
			 H^m_\dR(\sM, \Sym^k \sH) \xrightarrow\cong H^m_\dR(M_{\bbQ_p}, \Sym^k \sH),
		$$
		in other words, the de Rham cohomology may be calculated on $M_{\bbQ_p}$.  Henceforth, we 
		will freely use this fact.
\end{remark}

Since $M$ is affine, by Remark \ref{rem: Frobenius lifting}, there exists an overconvergent 
Frobenius $\frobphi_M$, and we may consider rigid syntomic cohomology with coefficients of $\sM$.  
We have the following.

\begin{proposition}\label{pro: canonical isom}
	We have canonical isomorphisms
	\begin{equation*}\begin{split}
			H^{k+1}_\syn(E^k, k+1)(\epsilon) &\xrightarrow\cong H^0_\syn(\sV, H^{k+1}_\rig(\sE^k, \bbQ_p(k+1))(\epsilon)),  \\
			H^{1}_\syn(\sM, \Sym^k \sH(1)) &\xrightarrow\cong H^0_\syn(\sV, H^{1}_\rig(\sM, \Sym^k \sH)(1)).
	\end{split}\end{equation*}
\end{proposition}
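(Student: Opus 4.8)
The plan is to derive both isomorphisms from a single mechanism: the Leray-type spectral sequence (or rather the associated Hochschild-Serre type descent) relating rigid syntomic cohomology of a smooth $\sV$-scheme to syntomic cohomology of the base $\sV$ with coefficients in the rigid cohomology of the fibers. First I would recall from the Appendix that for a smooth pair $\sX$ over $\sV$ there is a description of $H^m_\syn(\sX, \shM)$ as the cohomology of a mapping-fibre complex built from the overconvergent rigid cohomology $R\Gamma_\rig(\sX_K, M_\rig)$ together with the Frobenius $(1-\Phi)$ and the Hodge filtration $F^0$, in the spirit of equation \eqref{eq: alpha}. Concretely, $H^m_\syn$ sits in a long exact sequence relating it to $H^0_\syn(\sV, H^m_\rig(-))$ and $H^1_\syn(\sV, H^{m-1}_\rig(-))$, where for a mixed object $V$ over $\sV$ one has $H^0_\syn(\sV, V) = \Fil^0 V^{\Phi=1}$ (the ``morphisms from the trivial object'') and $H^1_\syn(\sV, V) = \Ext^1$, computed as the cokernel of $(1-\Phi)$ on an appropriate $\Fil$-quotient.

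The key step is then to show that the relevant $H^1_\syn(\sV, -)$ term vanishes, so that the edge map is an isomorphism. For the first isomorphism this requires $H^1_\syn(\sV, H^{k}_\rig(\sE^k, \bbQ_p(k+1))(\epsilon)) = 0$; by Lemma \ref{pro: rig sym} this group is $H^1_\syn(\sV, H^0_\rig(\sM, \Sym^k \sH)(1))$, and by Remark \ref{rem: vanishing} — more precisely by the rigid-cohomology version announced there and proved in \S\ref{section: 4-3} — we have $H^0_\rig(\sM, \Sym^k \sH) = 0$, so the whole coefficient object vanishes and there is nothing to prove. For the second isomorphism I would argue identically: $H^1_\syn(\sM, \Sym^k \sH(1))$ fits in the sequence with $H^1_\syn(\sV, H^0_\rig(\sM, \Sym^k \sH)(1))$ on one side, which again vanishes by the same input, and $H^0_\syn(\sV, H^1_\rig(\sM, \Sym^k \sH)(1))$ on the other, giving the asserted identification. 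One should also note that $F^0$ of the Tate twist $\bbQ_p(k+1)$, $k \geq 0$, is zero, which is exactly the hypothesis $F^0 M = 0$ under which the clean description \eqref{eq: alpha} of syntomic cohomology holds; this is what makes the spectral sequence degenerate into the short form used here.

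The main obstacle is bookkeeping rather than conceptual: one must check that the Leray spectral sequence for $\pi: \sE^k \to \sM$ in rigid syntomic cohomology with coefficients is available in the generality needed (properness and smoothness of $\pi$ as a morphism of smooth pairs, which is arranged via the Kuga-Sato desingularization $\wt E^k$), that it is compatible with the $\mu_2^k \rtimes \frS^k$-action so that one may pass to the $\epsilon$-eigenspace, and that the identification $H^m_\syn(X,n) \cong H^m_\syn(\sX, \bbQ_p(n))$ recalled before Definition \ref{def: H syn} is compatible with all of this. Granting the formalism of the Appendix, each of these is routine; the genuinely substantive input is the vanishing $H^0_\rig(\sM, \Sym^k \sH) = 0$, which is deferred to \S\ref{section: 4-3}, and the fact that $H^0_\syn(\sV, -)$ computes exactly the Frobenius-invariant part of $\Fil^0$, so that both sides of each displayed isomorphism are literally the same group once the $H^1_\syn(\sV,-)$ obstruction is killed.
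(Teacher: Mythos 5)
Your proposal is correct and follows essentially the same route as the paper: both isomorphisms come from the short exact sequence $0 \to H^1_\syn(\sV, H^{m}_\rig(-)) \to H^{m+1}_\syn(-) \to H^0_\syn(\sV, H^{m+1}_\rig(-)) \to 0$ of the Appendix, with the obstruction term killed by the vanishing $H^{k}_\rig(\sE^k)(k)(\epsilon) = H^0_\rig(\sM,\Sym^k\sH) = 0$, which the paper deduces from the de Rham vanishing of Remark \ref{rem: vanishing} together with the de Rham--rigid comparison (rather than from the ordinary-locus computation of \S\ref{section: 4-3}, which is the one minor misattribution in your write-up).
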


\begin{proof}
	By \cite{Bes1} Remark 8.7.3 and the isomorphism of de Rham with rigid cohomology, we have a long exact sequence
	\begin{multline*}
		\cdots \rightarrow  H^{k}_\rig(\sE, \bbQ_p(k+1))(\epsilon) \rightarrow H^{k+1}_\syn(E^k, k+1)(\epsilon) \\
		\rightarrow F^{0} H^{k+1}_\rig(\sE^k, \bbQ_p(k+1))(\epsilon)
		 \xrightarrow{1 - \frobphi^*} H^{k+1}_\rig(\sE^k,\bbQ_p(k+1))(\epsilon) \rightarrow \cdots.  
	\end{multline*}
	The first map is obtained from the fact that 
	\begin{multline*}
		H^0_\syn(\sV, H^{k+1}_\rig(\sE^k, \bbQ_p(k+1))(\epsilon)) \\
		= \ker
		\left(  F^0 H^{k+1}_\rig(\sE^k, \bbQ_p(k+1))(\epsilon) \xrightarrow{1-\frobphi^*}
		H^{k+1}_\rig(\sE^k, \bbQ_p(k+1))(\epsilon)
		 \right),
	\end{multline*}
	and we see from the construction that it is surjective.
	Similarly, we have an exact sequence
	\begin{multline*}
		0 \rightarrow H^1_\syn(\sV, H^{0}_\rig(\sM, \Sym^k \sH)(1)) \rightarrow
		H^{1}_\syn(\sM, \Sym^k \sH(1)) 
		\\\rightarrow H^0_\syn(\sV, H^{1}_\rig(\sM, \Sym^k \sH)(1)) \rightarrow 0,
	\end{multline*}
	and the second map is given by the surjection.  The maps are isomorphisms since
	we have
	$$
		H^{k}_\rig(\sE^k)(k)(\epsilon) = H^0_\rig( \sM, \Sym^k \sH) = 0
	$$
	from Remark \ref{rem: vanishing} and the fact that rigid cohomology is isomorphic to de Rham cohomology
	in our case.
\end{proof}

\begin{definition}
	We define 
	\begin{equation}\label{eq: main isom}
		H^{k+1}_\syn(E^k, k+1)(\epsilon) \xrightarrow\cong H^{1}_\syn(\sM, \Sym^k \sH(1))
	\end{equation}
	to be the isomorphism making the diagram
	\begin{equation*}
		\begin{CD}
			H^{k+1}_\syn(E^k, k+1)(\epsilon) @>{\cong}>> H^0_\syn(\sV, H^{k+1}_\rig(\sE^k, \bbQ_p(k+1))(\epsilon))  \\
			@V{\cong}VV @V{\cong}VV\\
			H^{1}_\syn(\sM, \Sym^k \sH(1)) @>{\cong}>> H^0_\syn(\sV, H^{1}_\rig(\sM, \Sym^k \sH)(1))
		\end{CD}
	\end{equation*}
	commutative, where the horizontal maps are the canonical isomorphisms of Proposition \ref{pro: canonical isom}
	and the right vertical isomorphism is induced from Lemma \ref{pro: rig sym}.  
	It is canonical in a sense that it is the composition of canonical maps.
\end{definition}

\begin{definition}
	We denote again by $\Eis_\syn^{k+2}(\varphi)$ the element
	$$
		\Eis_\syn^{k+2}(\varphi) \in H^1_\syn(\sM, \Sym^k \sH(1))
	$$
	which is defined to be the image of the syntomic Eisenstein class of Definition \ref{def: syntomic Eisenstein}
	with respect to the canonical isomorphism of \eqref{eq: main isom}.
\end{definition}

The syntomic Eisenstein class may be characterized as follows.

\begin{proposition}\label{pro: syn to dR}
	The syntomic Eisenstein class $\Eis_\syn^{k+2}(\varphi)$ is characterized as the unique element in 
	$H^1_\syn(\sM, \Sym^k \sH(1))$ which maps to 
	$$
		\Eis_\dR^{k+2}(\varphi)  \in   H^1_\dR(M_{\bbQ_p}, \Sym^k \sH)
	$$
	through the boundary map
	$$
		H^1_\syn(\sM, \Sym^k \sH(1))   \rightarrow H^1_\dR(M_{\bbQ_p}, \Sym^k \sH).
	$$
\end{proposition}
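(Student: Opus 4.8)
The plan is to read off both existence and uniqueness from the exact sequence already established in the proof of Proposition \ref{pro: canonical isom}. \emph{Existence} requires nothing new: by Lemma \ref{lem: syn to dR} the syntomic regulator is compatible with the de Rham boundary map, so $\Eis_\syn^{k+2}(\varphi)$ maps to $\Eis_\dR^{k+2}(\varphi)$ under $H^{k+1}_\syn(E^k,k+1)(\epsilon)\to H^{k+1}_\dR(E^k_{\bbQ_p})(\epsilon)$. Transporting the source along the canonical isomorphism \eqref{eq: main isom} and the target along the de Rham identification of Lemma \ref{pro: dR sym} (base changed to $\bbQ_p$) — both being functorial, hence compatible with the respective boundary maps — yields that $\Eis_\syn^{k+2}(\varphi)\in H^1_\syn(\sM,\Sym^k\sH(1))$ maps to $\Eis_\dR^{k+2}(\varphi)\in H^1_\dR(M_{\bbQ_p},\Sym^k\sH)$ through the boundary map of the statement; that this target is the correct one (in fact $\Eis_\dR^{k+2}(\varphi)$ lands in $F^0$) is recorded in \eqref{eq: EisFil0}.

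For \emph{uniqueness} it suffices to prove that the boundary map $H^1_\syn(\sM,\Sym^k\sH(1))\to H^1_\dR(M_{\bbQ_p},\Sym^k\sH)$ is injective. Recall from the proof of Proposition \ref{pro: canonical isom} the exact sequence
\begin{multline*}
	0 \to H^1_\syn(\sV, H^0_\rig(\sM,\Sym^k\sH)(1)) \to H^1_\syn(\sM,\Sym^k\sH(1)) \\
	\to H^0_\syn(\sV, H^1_\rig(\sM,\Sym^k\sH)(1)) \to 0.
\end{multline*}
By Remark \ref{rem: vanishing}, together with the identification of rigid with de Rham cohomology, $H^0_\rig(\sM,\Sym^k\sH)=0$; hence the leftmost term vanishes and the right-hand arrow is an isomorphism. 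Its target is, by Besser's description, the kernel of $1-\frobphi^*$ on $F^0 H^1_\rig(\sM,\Sym^k\sH)(1)$, in particular a subspace of $H^1_\rig(\sM,\Sym^k\sH)(1)$, which under the Hodge-filtration compatible comparison isomorphism with de Rham cohomology sits inside $H^1_\dR(M_{\bbQ_p},\Sym^k\sH)$. A diagram chase identifies the composite of the right-hand isomorphism with this inclusion with the boundary map of the statement, so the latter is injective; combined with existence this gives the asserted characterization.

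The mathematical content is entirely concentrated in the vanishing $H^0_\rig(\sM,\Sym^k\sH)=0$, imported from Remark \ref{rem: vanishing}; everything else is bookkeeping with the functorial identifications of Besser's syntomic groups with $H^0_\syn(\sV,-)$ of rigid cohomology and the compatibility of all the boundary maps under these identifications. The one place where I would be careful is checking that the map named ``the boundary map'' in the proposition is literally the composite described above — i.e.\ that the comparison isomorphism $H^1_\rig\cong H^1_\dR$ used in Lemma \ref{pro: rig sym} and in the Appendix intertwines the two — but this is formal once the definitions are unwound.
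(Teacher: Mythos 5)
Your argument is correct and is essentially the paper's own proof: existence via Lemma \ref{lem: syn to dR} plus compatibility of the identifications from Proposition \ref{pro: rig sym} and Lemma \ref{pro: dR sym}, and uniqueness via injectivity of the boundary map, which factors as the isomorphism $H^1_\syn(\sM,\Sym^k\sH(1))\cong H^0_\syn(\sV,H^1_\rig(\sM,\Sym^k\sH)(1))$ (resting on the vanishing $H^0_\rig(\sM,\Sym^k\sH)=0$) followed by the inclusion into $H^1_\dR(M_{\bbQ_p},\Sym^k\sH)$. The only difference is that you unwind the exact sequence from the proof of Proposition \ref{pro: canonical isom} explicitly, whereas the paper simply cites that the boundary map is by construction an isomorphism followed by an injection.
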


\begin{proof}
	The fact that the syntomic Eisenstein class maps to the de Rham class follows from Lemma
	\ref{lem: syn to dR} and the fact that the diagram 
	$$
	\begin{CD}
	H^0_\syn(\sV, H^{k+1}_\rig(\sE^k, \bbQ_p(k+1))(\epsilon)) @>>> H^{k+1}_\dR(E_{\bbQ_p}^k)(\epsilon)\\
	@V{\cong}VV  @V{\cong}VV\\	
	 H^0_\syn(\sV, H^1_\rig(\sM, \Sym^k \sH(1))) @>>>  H^1_\dR(M_{\bbQ_p}, \Sym^k \sH)
	\end{CD}
	$$
	is commutative.  Here the first vertical map is induced from the isomorphism of Proposition 
	\ref{pro: rig sym} and the second vertical isomorphism  is given by 
	Lemma \ref{pro: dR sym}.  The boundary map is 
	defined to be the composition
	\begin{multline*}
		H^1_\syn(\sM, \Sym^k \sH(1))  \xrightarrow\cong H^0_\syn(\sV, H^1_\rig(\sM, \Sym^k \sH(1))) \\
		\hookrightarrow  H^1_\dR(\sM, \Sym^k \sH) \xrightarrow\cong  H^1_\dR(M_{\bbQ_p}, \Sym^k \sH),
	\end{multline*}
	hence it is injective.  This proves the uniqueness of our class.
\end{proof}

In this paper, we will mainly be interested in the restriction of the Eisenstein class to the ordinary part of the modular curve.   
We denote by $M^\ord$ the open subscheme of $M = M(N)_{\bbZ_p}$ obtained by removing the zero of the Eisenstein series 
$$
	E_{p-1}  \in \Gamma( M, \ul\omega^{\otimes (p-1)})
$$
of weight $p-1$.
We let $\sM^\ord$ be the smooth pair $\sM^\ord = (M^\ord, \ol M)$.  The overconvergent Frobenius $\frobphi_\sM$ on $\sM$
induces an overconvergent Frobenius for $\sM^\ord$, and the inclusion $\sM^\ord \hookrightarrow \sM$ is compatible
with the action of this Frobenius.  We have a pullback map for rigid syntomic cohomology
$$
	H^1_\syn(\sM, \Sym^k \sH(1)) \rightarrow H^1_\syn(\sM^\ord, \Sym^k \sH(1)),
$$
and we denote again by $\Eis^{k+2}_\syn(\varphi)$ the pull back
$$
	\Eis^{k+2}_\syn(\varphi) \in  H^1_\syn(\sM^\ord, \Sym^k \sH(1))
$$
of $\Eis^{k+2}_\syn(\varphi)$ by this map.
We will explicitly describe this cohomology class in terms of $p$-adic Eisenstein series.

%
\subsection{Characterization of the syntomic Eisenstein class}
%

In this section, we prove that unlike the de Rham case, the syntomic Eisenstein class is uniquely characterized by its residue. 
The result of this section will not be used in the proof of our main theorem.   We first define the residue morphism for rigid 
cohomology (with filtration) by pasting together de Rham and rigid cohomology.   We let 
$E^k_\Cusp = \wt E^k \setminus E^k$, which is smooth over $\bbZ_p$, and consider the pairs $\ol\sE^k = (\wt E^k, \wt E^k)$ and  
$\ol\sE^k_\Cusp = (E^k_\Cusp, E^k_\Cusp)$.  Then we have morphisms of smooth pairs
$$
	\xymatrix{
		\sE^k  \ar@{^{(}->}[r]<-0.5ex> &  \ol\sE^k  & \ar@{_{(}->}[l] <0.5ex> \ol\sE^k_\Cusp.
	}
$$
By taking the Gysin exact sequence of rigid cohomology with trivial coefficients and then taking 
the projector $\varepsilon$, we have an exact sequence
{\small \begin{equation}\label{eq: rigid gysin}
	0 \rightarrow H_\rig^{k+1}( \ol \sE^k)(k+1)(\varepsilon) \\
	\rightarrow H^{k+1}_\rig(\sE^k)(k+1)(\varepsilon) 
	\xrightarrow{\res} H^0_\rig(\ol\sE^k_\Cusp)(\epsilon) \rightarrow 0.
\end{equation}}
We have a canonical isomorphism
$$
	 H^0_\rig(\ol\sE^k_\Cusp)(\epsilon) \xrightarrow\cong H^0_\rig(\Isom)^{(k)}.
$$
As in the de Rham case, we define the residue morphism $\res^k$ to be the map making the following diagram commutative.
$$
		\begin{CD}
			H^{k+1}_\rig(\sE^k)(k+1)(\varepsilon) @>>>  H^{0}_\rig(\ol\sE^k_\Cusp)(\varepsilon) \\
			@A{\cong}AA @A{\cong}AA \\
			H^1_\rig(\sM, \Sym^k \sH)(1) @>{\res^k}>>   H^0_\rig(\Isom)^{(k)}.
		\end{CD}
$$	
It is known that the action of the Frobenius on  $H_\rig^{k+1}( \ol \sE^k)(k+1)$ is of pure weight $-k-1$, hence we have
$$
	H^0_\syn(\sV, H_\rig^{k+1}( \ol \sE^k)(k+1)(\epsilon)) = 0.
$$
Using the fact that $ H^0_\syn(\Isom)^{(k)} = H^0_\syn(\sV,   H^0_\rig(\Isom)^{(k)})$,
the above result and  \eqref{eq: rigid gysin} shows that the residue morphism gives an isomorphism
\begin{equation}\label{eq: comp one}
	H^0_\syn(\sV, H_\rig^1(\sM, \Sym^k \sH)(1)  ) \xrightarrow\cong  H^0_\syn(\Isom)^{(k)}.
\end{equation}

\begin{definition}
	We define the residue map for syntomic cohomology
	\begin{equation}\label{eq: syn res}
		\res^k_\syn: H^1_\syn(\sM, \Sym^k \sH(1)) \xrightarrow\cong H^0_\syn(\Isom)^{(k)}
	\end{equation}
	to be the isomorphism obtained as the composition of
	$$
		H^1_\syn(\sM, \Sym^k \sH(1)) \xrightarrow\cong H^0_\syn(\sV, H^1_\rig(\sM, \Sym^k \sH(1)))
	$$
	with \eqref{eq: comp one}.
\end{definition}

Using this map, we may now characterize the syntomic Eisenstein class by its residue.

\begin{proposition}\label{pro: syn res}
	The syntomic Eisenstein class
	$$
		\Eis_\syn^{k+1}(\varphi) \in H^1_\syn(\sM, \Sym^k \sH(1))
	$$
	is characterized as the unique element which satisfies
	$$
		\res^k_\syn(\Eis_\syn^{k+1}(\varphi) ) = \frac{-1}{N^{k-1}} \rho^k(\varphi)(g),
	$$
	where $\res^k_\syn$ is the syntomic residue morphism \eqref{eq: syn res}.
\end{proposition}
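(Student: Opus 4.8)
The plan is to combine the residue isomorphism \eqref{eq: syn res} with the characterization of the de Rham Eisenstein class by its residue (Corollary \ref{cor: resEisdR}) and the characterization of the syntomic Eisenstein class by its image in de Rham cohomology (Proposition \ref{pro: syn to dR}). The key point is that $\res^k_\syn$ is an \emph{isomorphism}, so once we know the value $\res^k_\syn(\Eis_\syn^{k+1}(\varphi))$ there is at most one class with that residue; it then suffices to compute this value and check it agrees with $\tfrac{-1}{N^{k-1}}\rho^k(\varphi)$.

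First I would set up the compatibility diagram relating the syntomic residue $\res^k_\syn$ and the de Rham residue $\res^k$ on $H^1_\dR(M_{\bbQ_p},\Sym^k\sH)$. Concretely, the boundary map $H^1_\syn(\sM,\Sym^k\sH(1))\to H^1_\dR(M_{\bbQ_p},\Sym^k\sH)$ of Proposition \ref{pro: syn to dR} and the residue maps fit into a commutative square, because the syntomic residue was \emph{defined} by pasting de Rham and rigid cohomology and the forgetful map to de Rham cohomology is built into that construction; on the target side one uses that $H^0_\syn(\Isom)^{(k)}=H^0_\syn(\sV,H^0_\rig(\Isom)^{(k)})$ maps compatibly to $H^0_\dR(\Isom)^{(k)}$. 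Chasing $\Eis_\syn^{k+1}(\varphi)$ around this square: by Proposition \ref{pro: syn to dR} it maps to $\Eis_\dR^{k+2}(\varphi)$ in de Rham cohomology, whose residue is $\tfrac{-1}{N^{k-1}}\rho^k(\varphi)(g)$ by Corollary \ref{cor: resEisdR}. Since the right vertical map $H^0_\syn(\Isom)^{(k)}\to H^0_\dR(\Isom)^{(k)}$ is injective (indeed $H^0_\syn(\sV,-)$ for the pure-weight-zero object $H^0_\rig(\Isom)^{(k)}$ injects into the underlying de Rham vector space), this forces $\res^k_\syn(\Eis_\syn^{k+1}(\varphi))=\tfrac{-1}{N^{k-1}}\rho^k(\varphi)(g)$.

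Finally, uniqueness is immediate: $\res^k_\syn$ is an isomorphism by \eqref{eq: syn res}, so the equation $\res^k_\syn(x)=\tfrac{-1}{N^{k-1}}\rho^k(\varphi)(g)$ has exactly one solution $x\in H^1_\syn(\sM,\Sym^k\sH(1))$, and we have just shown $\Eis_\syn^{k+1}(\varphi)$ is that solution. The main obstacle I anticipate is the bookkeeping in the compatibility square: one has to check that the residue map for syntomic cohomology, which was assembled from the Gysin sequence \eqref{eq: rigid gysin} together with the vanishing $H^0_\syn(\sV,H^{k+1}_\rig(\ol\sE^k)(k+1)(\epsilon))=0$, really is compatible under the boundary-to-de-Rham map with the de Rham residue of Lemma \ref{pro: dR sym}; this amounts to naturality of Gysin sequences with respect to the comparison between rigid/syntomic and de Rham cohomology, which is standard but must be stated carefully. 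Everything else is formal once that square commutes.
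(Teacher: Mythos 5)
Your proposal is correct and follows essentially the same route as the paper: the authors likewise chase $\Eis_\syn^{k+2}(\varphi)$ around the commutative square relating $\res^k_\syn$ to the de Rham residue $\res^k$ via the natural injections into de Rham cohomology, invoke Proposition \ref{pro: syn to dR} and Corollary \ref{cor: resEisdR} to compute the value, and deduce uniqueness from $\res^k_\syn$ being an isomorphism. Your additional remark on the injectivity of $H^0_\syn(\Isom)^{(k)}\hookrightarrow H^0_\dR(\Isom)^{(k)}$ is exactly the point the paper leaves implicit in calling the vertical maps ``natural injections.''
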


\begin{proof}
	We have a commutative diagram
	$$
		\begin{CD}
			H^0_\syn(\sV, H^1_\rig(\sM, \Sym^k \sH(1))) @>{\cong}>> H^0_\syn(\Isom)^{(k)} \\
			@VVV  @VVV \\
			H^1_\dR(M_{\bbQ_p}, \Sym^k \sH) @>{\res^k}>> H^0_\dR(\Isom)^{(k)},
		\end{CD}
	$$
	where the vertical maps are the natural injection.  The calculation of residue follows from
	Proposition \ref{pro: syn to dR}, which asserts that the image of $\Eis_\syn^{k+2}(\varphi)$ 
	in $H^1_\dR(M_{\bbQ_p}, \Sym^k \sH)$ is equal to the de Rham Eisenstein class
	$\Eis_\dR^{k+2}(\varphi)$, and the calculation in Corollary \ref{cor: resEisdR} of the de Rham Eisenstein class.
	The uniqueness follows since $\res^k_\syn$ is an isomorphism.
\end{proof}

%
%
%
\section{Explicit formulas}
%
%
%

In this section we relate the Eisenstein classes of section \ref{deRhamEis} to the Eisenstein series
considered by Katz in \cite{Ka3}. We use the comparison theorem of Beilinson-Levin \ref{respol}.

%
\subsection{Modular forms}
%

Let us assume that $N\ge 3$, so that the $\GarithN$- and the $\Gamma(N)$-moduli problems
are representable. Recall that we defined in (\ref{ulomegadefn}) the coherent sheaf
$$
	\ul\omega:=\pi_*\Omega^1_{E/M}
$$
on $M=\MN$ or $M=\MarithN$. 
\begin{definition}
	Let $k\in \bbZ$. A \emph{modular form $F$ of weight $k+2$} on $M=\MN$ or $M=\MarithN$ is a global
	section
	$$
		F\in \Gamma(M,\ul\omega^{\otimes k+2}).
	$$
\end{definition}
Using the \emph{Kodaira-Spencer isomorphism} 
$$
	\ul\omega^{\otimes 2}\isom \Omega^1_M
$$
we can identify the space of modular forms of weight $k+2$
$$
	\Gamma(M,\ul\omega^{\otimes k+2})\isom \Gamma(M,\ul\omega^{\otimes k}\otimes\Omega^1_M).
$$
In particular, using 
$$
	\Gamma(M,\ul\omega^{\otimes k}\otimes\Omega^1_M)=F^0H^1_\dR(M, \Sym^k \sH(1))
$$ 
we may consider the modular forms of weight $k+2$ as elements in 
$$
	H^1_\dR(M, \Sym^k \sH(1)).
$$

Finally, we note that the right action of $\GL_2(\bbZ/N\bbZ)$ on $\MN$ induces a left action
of $g\in \GL_2(\bbZ/N\bbZ)$ on $F\in \Gamma(\MN,\ul\omega^{\otimes k}\otimes\Omega^1_M)$ by pull-back:
$$
	F\mapsto g^*F.
$$

%
\subsection{The $q$-expansion principle}
%

Consider the Tate curve 
$$
	(\Tate(q^N),\omega_\can,\beta_\can)
$$ 
with its canonical invariant differential $\omega_\can$ and its canonical $\GarithN$-level structure 
$\beta_\can$ over $\bbZ((q))$ (see \cite{Ka3} 2.2). For $N\ge 3$, we get a map
$$
	\iota_\infty:\bbZ((q))\to \MarithN.
$$
The differential $\omega_\can$ provides a basis
of $\iota_\infty^*\ul\omega$ over $\bbZ((q))$ and hence an identification
$$
	\Gamma(\Spec\,\bbZ((q)),\iota_\infty^*\ul\omega^{\otimes k})\isom \bbZ((q)).
$$
Using the compatibility with base change we also get 
$$
	\Gamma(\Spec\,(\bbZ[1/N,\zeta_N]\otimes_{\bbZ}\bbZ((q))),\iota_\infty^*\ul\omega^{\otimes k})\isom \bbZ[1/N,\zeta_N]\otimes_{\bbZ}\bbZ((q)).
$$
Note that $\bbZ[1/N,\zeta_N]\otimes_{\bbZ}\bbZ((q))\subset\bbZ[1/N,\zeta_N]((q))$
\begin{definition}\label{q-expansiondefn}
	The \emph{$q$-expansion homomorphism} is the map
	\begin{align*}	
		q_\infty:\Gamma(\MarithN,\ul\omega^{\otimes k+2})&\to \bbZ((q))\\
		F&\mapsto \iota_\infty^*F.
	\end{align*}
	In the same way we get a $q$-expansion map
	\begin{align*}	
		q_\infty:\Gamma(\MN,\ul\omega^{\otimes k+2})&\to \bbZ[1/N,\zeta_N]((q))\\
		F&\mapsto \iota_\infty^*F.
	\end{align*}
\end{definition}
Note that the base change map $\Gamma(\MarithN,\ul\omega^{\otimes k+2})\to \Gamma(\MN,\ul\omega^{\otimes k+2})$
is injective and that we have a commutative diagram
\begin{equation}\label{q-exp-CD}
	\begin{CD}
		\Gamma(\MarithN,\ul\omega^{\otimes k+2})@>q_\infty>>\bbZ((q))\\
		@VVV@VVV\\
		\Gamma(\MN,\ul\omega^{\otimes k+2})@>q_\infty>> \bbZ[1/N,\zeta_N]((q)).
	\end{CD}
\end{equation}
	
\begin{theorem}[$q$-expansion principle]\label{q-exp-thm}
For a fixed weight $k+2\ge 0$ the homomorphisms $q_\infty$ are injective:
$$
	q_\infty:\Gamma(\MarithN,\ul\omega^{\otimes k+2})\hookrightarrow \bbZ((q))
$$
and 
$$
	q_\infty:\Gamma(\MN,\ul\omega^{\otimes k+2})\hookrightarrow \bbZ[1/N,\zeta_N]((q)).
$$
\end{theorem}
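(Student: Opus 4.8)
The plan is to deduce the $q$-expansion principle over general base rings from the geometric irreducibility of the moduli schemes, exactly as in Katz's treatment \cite{Ka3}. The key point is that the $q$-expansion homomorphism is, up to the trivialization by $\omega_\can$, simply restriction of a global section along the map $\iota_\infty\colon\Spec\bbZ((q))\to\MarithN$, and a global section of a line bundle on an integral scheme that vanishes on a nonempty open subscheme vanishes identically. So the first step is to observe that $\MarithN$ is smooth and affine over $\bbZ$ with geometrically irreducible fibers (stated in the excerpt), hence $\MarithN$ is integral, and in fact $\ul\omega$ is a line bundle on it, so $\Gamma(\MarithN,\ul\omega^{\otimes k+2})$ is a torsion-free $\bbZ$-module and in particular injects into its base change to $\bbQ$.

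Second, I would reduce to showing that the image of $\iota_\infty$ is scheme-theoretically dense, or more precisely that the composite ring map $\Gamma(\MarithN,\cO)\to\bbZ((q))$ is injective. Over $\bbQ$ this is classical: $\MarithN_\bbQ$ is a smooth geometrically irreducible affine curve, the cusp $\infty$ lies on its compactification, and the formal/analytic parameter $q$ at $\infty$ gives a local embedding, so any regular function on the curve vanishing to all orders in $q$ is zero; equivalently the local ring at $\infty$ embeds in $\bbZ((q))\otimes\bbQ$ and the curve being irreducible, the global functions embed too. Tensoring with $\bbZ[1/N,\zeta_N]$ and using the identification $\MN\isom\MarithN\otimes_\bbZ\bbZ[1/N,\zeta_N]$ from the first subsection handles the $\Gamma(N)$-version, since $\bbZ[1/N,\zeta_N]$ is flat over $\bbZ$ and $\bbZ[1/N,\zeta_N]\otimes_\bbZ\bbZ((q))\subset\bbZ[1/N,\zeta_N]((q))$ as noted in the excerpt; thus the bottom $q_\infty$ in diagram \eqref{q-exp-CD} is injective, and then by the commutativity of \eqref{q-exp-CD} and the injectivity of the left vertical arrow, the top $q_\infty$ is injective as well. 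Actually it is cleaner to argue the two statements in the opposite logical order: prove the $\Gamma(N)$-statement over $\bbZ[1/N,\zeta_N]$ first by the curve argument, then deduce the $\GarithN$-statement by the injectivity of $\Gamma(\MarithN,\ul\omega^{\otimes k+2})\to\Gamma(\MN,\ul\omega^{\otimes k+2})$ together with compatibility of $q$-expansions.

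Third, for the integrality refinement — that the image lands in $\bbZ((q))$ rather than merely $\bbQ((q))$, and that injectivity holds already at the level of $\bbZ$-modules and not just after inverting primes — I would invoke that $\iota_\infty$ is defined over $\bbZ$ (the Tate curve with its canonical level structure and differential is defined over $\bbZ((q))$, per \cite{Ka3} 2.2), so $q_\infty$ is a priori a map of $\bbZ$-modules with target $\bbZ((q))$; injectivity then follows from the $\bbQ$-statement because the source is $\bbZ$-torsion-free. The one genuine subtlety — and the step I expect to be the main obstacle to write carefully — is handling the bad primes, i.e. primes dividing $N$, for the $\GarithN$-scheme: there $\MarithN$ may have vertical components or the reduction mod such $p$ is more delicate, and one must know that $\MarithN$ is nevertheless integral with $\iota_\infty$ hitting a dense open. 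This is precisely where one uses that $\MarithN$ has \emph{geometrically irreducible fibers} over all of $\Spec\bbZ$ (the theorem of Katz--Mazur / Deligne--Rapoport on irreducibility of modular curves in arbitrary characteristic), so that $\MarithN$ is irreducible, and that the cusp $\infty$ together with its formal parameter $q$ gives a faithfully flat, or at least schematically dominant, map $\Spf\bbZ[[q]]\to\wh{\MarithN}$ along the cusp — whence $\Gamma(\MarithN,\cO)\hookrightarrow\bbZ((q))$. Granting that input, the rest is formal.
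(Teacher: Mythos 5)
Your proposal is correct, but note that the paper itself offers no argument here: it simply defers to Katz \cite{Ka1}, Corollary 1.6.2, and your sketch is essentially a reconstruction of that standard proof (irreducibility of the modular curve plus schematic density of the formal neighborhood of the cusp). One remark: for the statement as actually formulated here the ``main obstacle'' you identify in your last paragraph does not arise. Since $\MarithN$ is flat over $\bbZ$ and $\ul\omega^{\otimes k+2}$ is invertible, $\Gamma(\MarithN,\ul\omega^{\otimes k+2})$ is $\bbZ$-torsion-free, so --- as you yourself observe --- injectivity over $\bbZ$ follows formally from injectivity after tensoring with $\bbQ$, and the latter only uses that the \emph{generic} fiber $\MarithN_\bbQ$ is irreducible with $\iota_\infty$ hitting its generic point (e.g.\ because $\cO_{\ol\sModular,\infty}\hookrightarrow\bbQ[[q]]$ by Krull intersection and the curve is integral). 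No information about the special fibers at primes dividing $N$ is needed; that input becomes essential only for the stronger form of the $q$-expansion principle over general base rings (e.g.\ forms over $\bbF_p$), which is what Katz proves but more than what is asserted here. With that caveat, the argument is complete.
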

For a proof see \cite{Ka1} Corollary 1.6.2.

%
\subsection{Eisenstein series}
%

Let
$$
	\bbQ[(\bbZ/N\bbZ)^2]:=\{\varphi:(\bbZ/N\bbZ)^2\to \bbQ\}
$$
be the space of $\bbQ$-valued functions on $(\bbZ/N\bbZ)^2$.
We want to define explicitly Eisenstein series on $\MN$. For this 
we consider $M(N)$ over $\Spec\,\bbQ$ and 
writing down Eisenstein series on $\MN(\bbC)$. Then we use the
$q$-expansion principle to show that these Eisenstein series are in fact 
defined over $\bbQ$ and that they are in fact already modular forms on $\MarithN$. 
Note that we can parametrize as in \cite{HK2} section 7
$$
	\MN(\bbC)=\SL_2(\bbZ)\backslash(\frH\times \GL_2(\bbZ/N)).
$$
A neighborhood around the cusps is then given by 
$$
	\pm U(\bbZ)\backslash (\frH\times \GL_2(\bbZ/N))\isom \bbC^\times \times \left(\pm U(\bbZ/N)\backslash\GL_2(\bbZ/N)\right),
$$
where $\pm U:= \left\{\pm \begin{pmatrix} 1&*\\0&1\end{pmatrix} \right\} $.
Here the map is given by $(\tau,g)\mapsto (e^{2\pi i\tau/N},g)$. The cusp $\infty$
corresponds in this description to the coset of $\id$.

We are going to define certain holomorphic Eisenstein series $E_{k+2,0,\varphi}$. These will be used to 
express $\Eis^{k+2}_\dR(\varphi)$ explicitly and are essentially the Eisenstein series used by
Katz to define his $p$-adic measure. 
\begin{definition}
	Let $k\ge 1$ and $\varphi\in \bbQ[(\bbZ/N\bbZ)^2]$.
	Define a holomorphic \emph{Eisenstein series} on $\MN(\bbC)$ by the formula
	$$
		E_{k+2,0,\varphi}(\tau,g):=\frac{(-1)^{k+2}N^{k+2}(k+1)!}{2(2\pi i)^{k+2}}\sum_{(m,n)\in\bbZ^2\setminus (0,0)}
		\frac{\wh{g\varphi}(m,n)}{(m+n\tau)^{k+2}},
	$$
	where $\tau$ is the coordinate in the upper half plane and $\wh{g\varphi}$ the symplectic Fourier transform
	 introduced in \eqref{symplecticFourier}.
\end{definition}

\begin{lemma}\label{lem: comparison Katz}
	Consider the Eisenstein series $G_{k+2,0,f}$ on $\MarithN$ for $f\in \bbQ[(\bbZ/N\bbZ)^2]$
	defined in Katz \cite{Ka3} 3.6.9. 	
	If one identifies $\MarithN(\bbC)$ with the component of $g\in \GL_2(\bbZ/N)$ in $\MN(\bbC)$, one gets
	$$
		E_{k+2,0,\varphi}(\tau,g)=G_{k+2,0,P_1(\wh{g\varphi})}(\tau),
	$$
	where $P_1(\wh{g\varphi})$ is the composition of the Fourier transforms defined in \eqref{partialFourier} and \eqref{symplecticFourier}
	for $g \varphi$.
\end{lemma}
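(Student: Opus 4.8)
The plan is to compare $q$-expansions, using the $q$-expansion principle (Theorem~\ref{q-exp-thm}) to reduce everything to an identity of formal power series. Concretely, both sides of the claimed equality are sections of $\ul\omega^{\otimes k+2}$ over the component of $g$ in $\MN$ (after extending scalars to $\bbC$), so it suffices to check that their images under $q_\infty$, i.e.\ their pullbacks along $\iota_\infty$, agree. Since $\MarithN$ injects into $\MN$ compatibly with $q$-expansions via diagram~\eqref{q-exp-CD}, it is enough to identify the two $q$-expansions as elements of $\bbC((q))$.

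First I would recall Katz's formula for the $q$-expansion of $G_{k+2,0,f}$ at the standard cusp, from \cite{Ka3} 3.6.9: it is a constant term given by an $L$-value (essentially $L(f,-k-1)$ type expression) plus a sum over $n\ge 1$ of coefficients built from divisor-type sums $\sum_{d\mid n} f(\ast,\ast) d^{k+1}$, all normalized by the factor $\tfrac{(-1)^{k+2}N^{k+2}(k+1)!}{2(2\pi i)^{k+2}}$ that already appears in our definition of $E_{k+2,0,\varphi}$. On the other side, I would expand the Kronecker–Eisenstein-type series
$$
	E_{k+2,0,\varphi}(\tau,g)=\frac{(-1)^{k+2}N^{k+2}(k+1)!}{2(2\pi i)^{k+2}}\sum_{(m,n)\ne(0,0)}\frac{\wh{g\varphi}(m,n)}{(m+n\tau)^{k+2}}
$$
by the classical Lipschitz formula $\sum_{m\in\bbZ}(m+w)^{-(k+2)} = \frac{(-2\pi i)^{k+2}}{(k+1)!}\sum_{d\ge 1} d^{k+1} e^{2\pi i d w}$ applied to the $n\ne 0$ terms (splitting off the $n=0$ contribution, which produces the constant term as a value of $L(P_1\wh{g\varphi},\,\cdot\,)$ via \eqref{L-defn} and the relation $P_1(g\varphi)=P_2(\wh{g\varphi}^t)$ already invoked in Definition~\ref{horospherical}). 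Collecting the resulting double sum over $n\ge 1$ and $d\ge 1$ and reindexing by the product $nd$, one gets precisely the divisor-sum coefficients appearing in Katz's formula, but with $\wh{g\varphi}$ replaced by its first partial Fourier transform $P_1(\wh{g\varphi})$ — this is exactly where the operator $P_1$ enters, since the Fourier variable in the Lipschitz expansion pairs against the first argument of $\wh{g\varphi}$.

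The bookkeeping to watch is: (i) the choice of cusp and level structure — one must check that Katz's normalization at the Tate curve $(\Tate(q^N),\omega_\can,\beta_\can)$ matches the point $e^{2\pi i\tau/N}$ in the analytic uniformization $\MN(\bbC)=\SL_2(\bbZ)\backslash(\frH\times\GL_2(\bbZ/N))$ described above, including the identification of the canonical differential $\omega_\can$ with $(2\pi i)\,d(e^{2\pi i\tau/N})$ or the appropriate power, which accounts for the powers of $N$ and $2\pi i$; and (ii) that the symplectic-versus-partial Fourier transforms are normalized consistently with \eqref{partialFourier}–\eqref{symplecticFourier}. The main obstacle is purely this normalization matching — tracking the constant $\frac{(-1)^{k+2}N^{k+2}(k+1)!}{2(2\pi i)^{k+2}}$ and the role of $N$ in the level structure through Katz's conventions in \cite{Ka3} — rather than anything structural; once the $q$-expansions are seen to coincide as elements of $\bbZ[1/N,\zeta_N]((q))$ (or rather $\bbC((q))$, then descended), the $q$-expansion principle finishes the proof and simultaneously shows both series are defined over $\bbQ$ and descend to $\MarithN$.
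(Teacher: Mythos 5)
Your argument is essentially correct in outline, but it takes a much heavier route than the paper, which disposes of this lemma with the single line ``this follows directly from the definition.'' The point is that Katz's $G_{k+2,0,f}$ in \cite{Ka3} 3.6.9 is itself given by a lattice sum of Eisenstein--Kronecker type, so the claimed identity is a finite Fourier-inversion exercise: rewriting the sum $\sum_{(m,n)\neq(0,0)}\wh{g\varphi}(m,n)(m+n\tau)^{-(k+2)}$, weighted by the symplectic Fourier transform, as Katz's sum indexed by congruence classes weighted by $P_1(\wh{g\varphi})$, with the prefactor $\tfrac{(-1)^{k+2}N^{k+2}(k+1)!}{2(2\pi i)^{k+2}}$ absorbing the normalization. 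No $q$-expansion, Lipschitz formula, or appeal to the $q$-expansion principle is needed at this stage. What your proposal actually proves along the way --- the Lipschitz expansion, the splitting of the $n=0$ terms into an $L$-value constant term, the reindexing of the double sum into divisor sums --- is precisely the content of the \emph{next} lemma in the paper (the $q$-expansion formula \eqref{eq: q expansion}, for which the paper cites \cite{Ka3} 3.2.5) and of Proposition \ref{Eis-q-expansion}, where the $q$-expansion principle is invoked to descend to $\bbQ$ and to $\MarithN$. So your argument is not wrong, but it conflates two steps the paper deliberately separates, and it buys the rationality and descent statements as by-products at the cost of redoing an analytic computation that is only needed later; the paper's route isolates the present lemma as pure bookkeeping of conventions. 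One small caution if you do run your version: the map $q_\infty$ as defined is the expansion at the cusp $\infty$ of the identity component, so to compare on the component of $g$ you must first use $E_{k+2,0,\varphi}(\tau,g)=E_{k+2,0,g\varphi}(\tau,\id)$ (as the paper does in the following lemma) before invoking injectivity of the $q$-expansion map.
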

\begin{proof}
	This follows directly from the definition.
\end{proof}
To define the Eisenstein measure later, we need to know the $q$-expansion of $E_{k+2,0,\varphi}$.
\begin{lemma}
	Let $k\ge 1$ and $\varphi\in \bbQ[(\bbZ/N\bbZ)^2]$.
	Then the $q$-expansion of $E_{k+2,0,\varphi}$ at the cusp $g\in \GL_2(\bbZ/N)$ is given by
	\begin{multline}\label{eq: q expansion}
		\frac{1}{2}L \left(-1-k, P_1(g\varphi)(0,m) -(-1)^{k+1} P_1(g\varphi)(0,-m)\right)\\+\sum_{n>0}
		q^n \sum_{n=dd'}  \left( d^{k+1}P_1(g\varphi)(d',d)- (-d)^{k+1} P_1(g \varphi) (-d', -d) \right).
	\end{multline}
\end{lemma}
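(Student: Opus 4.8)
The plan is to compute the Fourier expansion of the defining series directly; the normalization in the definition of $E_{k+2,0,\varphi}$ is chosen precisely so that the scalars work out. Since $k\ge 1$ the weight $k+2$ is at least $3$, so the double sum
$$
	\sum_{(m,n)\in\bbZ^2\setminus(0,0)}\frac{\wh{g\varphi}(m,n)}{(m+n\tau)^{k+2}}
$$
converges absolutely and may be rearranged freely. I would split it according to the sign of $n$: the $n=0$ part produces the constant term, and the $n>0$ and $n<0$ parts produce the two summands of the $q$-series. Throughout one uses that the local parameter at the cusp indexed by $g$ is $q=e^{2\pi i\tau/N}$, via the parametrization of the cusp neighbourhoods recalled above, so that the action of $g$ is already built into $E_{k+2,0,\varphi}(\tau,g)$.

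For the $n=0$ part one has $\sum_{m\neq 0}\wh{g\varphi}(m,0)\,m^{-(k+2)}$, which, since $\wh{g\varphi}(\cdot,0)$ is periodic modulo $N$, is an $L$-series of the type \eqref{L-defn} and hence has a well-defined value. Substituting the definition \eqref{symplecticFourier} of the symplectic Fourier transform and carrying out the sum over one of the two residue variables converts $\wh{g\varphi}$ into the partial Fourier transform $P_1(g\varphi)$ of \eqref{partialFourier}; concretely one uses the elementary identities $P_1(\wh\psi)(a,b)=P_1(\psi)(b,a)$ and $\sum_u\psi(u,v)=P_1(\psi)(0,v)$. Applying the functional equation for the $L$-series displayed just after \eqref{L-defn} with $k$ replaced by $k+2$ --- equivalently, the Hurwitz-type formula expressing this value through the Bernoulli polynomial $B_{k+2}$ exactly as in Definition \ref{horospherical} --- then rewrites the constant term as $\tfrac12 L\big(-1-k,\;P_1(g\varphi)(0,m)-(-1)^{k+1}P_1(g\varphi)(0,-m)\big)$; the sign $(-1)^{k+1}$ and the symmetrization come from combining the $m>0$ and $m<0$ terms together with the parity $B_{k+2}(1-x)=(-1)^{k+2}B_{k+2}(x)$.

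For the $n\neq 0$ parts, I would group the summation variable $m$ into residue classes $m\equiv\mu\pmod N$ and apply the classical Lipschitz summation formula
$$
	\sum_{m\in\bbZ}\frac{1}{(z+m)^{k+2}}=\frac{(-2\pi i)^{k+2}}{(k+1)!}\sum_{d\ge 1}d^{k+1}e^{2\pi i dz},
$$
with $z=(\mu+n\tau)/N$ for $n>0$, and its reflected form (which brings in the factor $(-1)^{k+2}$) for $n<0$. Since $e^{2\pi i d(\mu+n\tau)/N}=\zeta_N^{d\mu}q^{dn}$, summing over $\mu$ against $\wh{g\varphi}(\mu,n)$ produces precisely $P_1(\wh{g\varphi})(d,n)=P_1(g\varphi)(n,d)$; collecting terms by the power of $q$ and writing this exponent as $n=dd'$ (with $d$ the index from the Lipschitz sum and $d'$ the cofactor) turns the double sum into a sum over factorizations, and the two summands $d^{k+1}P_1(g\varphi)(d',d)$ and $-(-d)^{k+1}P_1(g\varphi)(-d',-d)$ are the $n>0$ and $n<0$ contributions respectively. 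The constant $\frac{(-1)^{k+2}N^{k+2}(k+1)!}{2(2\pi i)^{k+2}}$ is exactly what cancels the factors $(2\pi i)^{\pm(k+2)}$, $N^{k+2}$ and $(k+1)!$ accumulated along the way.

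A structurally cleaner, less computational alternative is to invoke Lemma \ref{lem: comparison Katz}, which identifies $E_{k+2,0,\varphi}(\tau,g)$ with Katz's series $G_{k+2,0,P_1(\wh{g\varphi})}(\tau)$, and then to quote the $q$-expansion of $G_{k+2,0,f}$ from \cite{Ka3}, again using $P_1(\wh\psi)(a,b)=P_1(\psi)(b,a)$ to pass from $P_1(\wh{g\varphi})$ back to $P_1(g\varphi)$. In either approach the only real difficulty is bookkeeping: keeping the Fourier-transform conventions straight (which of $P_1,P_2$ occurs, the signs in the exponentials, and the order of the two arguments) so that $\wh{g\varphi}$ converts cleanly into $P_1(g\varphi)$, and tracking all the scalar factors --- the signs $(-1)^{k+2}$, the powers of $N$, the factorial $(k+1)!$, the powers of $2\pi i$, and especially the halves coming from the even/odd symmetrization and from separating $n>0$ from $n<0$ --- so that they assemble correctly into the coefficients in \eqref{eq: q expansion}.
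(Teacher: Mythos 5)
Your fallback argument --- identify $E_{k+2,0,\varphi}(\tau,g)$ with $G_{k+2,0,P_1(\wh{g\varphi})}(\tau)$ via Lemma \ref{lem: comparison Katz} and quote Katz's $q$-expansion --- is exactly the paper's proof: the paper cites \cite{Ka3} 3.2.5 for the cusp $\id=\infty$ and reduces the general cusp to it via $E_{k+2,0,\varphi}(\tau,g)=E_{k+2,0,g\varphi}(\tau,\id)$ (which is the same equivariance your primary argument builds in by working with $\wh{g\varphi}$ throughout). Your primary argument, the direct Lipschitz-formula computation, is a genuinely self-contained alternative that the paper does not carry out, and it is sound: the splitting by the sign of $n$, the grouping of $m$ into residue classes mod $N$, the identity $P_1(\wh\psi)(a,b)=P_1(\psi)(b,a)$, and the cancellation of the normalizing constant against $(-2\pi i)^{k+2}/(N^{k+2}(k+1)!)$ are all correct. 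What the citation route buys is precisely the avoidance of the one genuinely delicate spot in the direct route, namely the constant term: note that unfolding $\wh{g\varphi}(m,0)$ naively produces a transform in the \emph{second} variable of $g\varphi$, and one must route the computation through the functional equation so that what emerges is an $L$-value of $m\mapsto P_1(\wh{g\varphi})(m,0)=P_1(g\varphi)(0,m)$ and not of some other partial transform; in Katz's normalization this identification is immediate, whereas in the direct computation it is exactly the kind of convention-chasing you flag. I would also track the overall factors of $\tfrac12$ in the $n\neq 0$ terms carefully against the displayed formula, since the even/odd symmetrization and the $n>0$ versus $n<0$ split each contribute a half; but none of this affects the validity of the method.
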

\begin{proof}
	For the computation at the cusp $\id=\infty$, see for example \cite{Ka3} 3.2.5.  For the general case,
	we use that $E_{k+2,0,\varphi}(\tau,g)=E_{k+2,0,g\varphi}(\tau,\id)$.
\end{proof}
If $\varphi\in\bbQ[(\bbZ/N\bbZ)^2]$ this $q$-expansion has coefficients in $\bbQ$
and the $q$-expansion principle, (\ref{q-exp-CD}) and \ref{q-exp-thm}, allow us to conclude:
\begin{proposition}[$q$-expansion of Eisenstein series \cite{Ka3} 3.6.9.]\label{Eis-q-expansion}
	Let $k\ge 1$ and $\varphi\in \bbQ[(\bbZ/N\bbZ)^2]$. Then there are modular forms of weight $k+2$
	$$
		E_{k+2,0,\varphi}\in \Gamma(\MN_\bbQ,\ul\omega^{\otimes k+2})
	$$
	called \emph{Eisenstein series of weight $k+2$}, whose $q$-expansion on the component
	$g\in\GL_2(\bbZ/N) $ is given by \eqref{eq: q expansion}.
\end{proposition}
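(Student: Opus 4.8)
The plan is to recognize $E_{k+2,0,\varphi}$ as the analytic avatar of an algebraic modular form defined over $\bbQ$, and then to read off its $q$-expansion from the preceding lemma. First, since $k\ge 1$ the weight $k+2$ is at least $3$, so the defining series converges absolutely and locally uniformly on $\frH$ and, for each fixed $g\in\GL_2(\bbZ/N)$, defines a holomorphic function of $\tau$. A standard computation of the $\SL_2(\bbZ)$-transformation law, using the interaction of the symplectic Fourier transform with the $\GL_2(\bbZ/N)$-action, shows that the change of summation variable forced by $\gamma\in\SL_2(\bbZ)$ is exactly compensated by the corresponding twist on $\wh{g\varphi}$, so that $(\tau,g)\mapsto E_{k+2,0,\varphi}(\tau,g)$ descends to a holomorphic modular form of weight $k+2$ on $\MN(\bbC)=\SL_2(\bbZ)\backslash(\frH\times\GL_2(\bbZ/N))$; by the preceding lemma its Fourier expansion at each cusp $g$ is a power series with no polar part, so it is holomorphic at the cusps. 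By the GAGA comparison between analytic and algebraic modular forms on the (proper) compactified modular curve --- or, equivalently, by Lemma \ref{lem: comparison Katz}, which identifies $E_{k+2,0,\varphi}(\tau,g)$ with the analytic avatar of Katz's algebraic Eisenstein series $G_{k+2,0,P_1(\wh{g\varphi})}$ of \cite{Ka3} 3.6.9 --- this section comes from an element of $\Gamma(\MN_\bbC,\ul\omega^{\otimes(k+2)})$.

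It remains to descend to $\bbQ$, and for this one invokes the $q$-expansion principle. By compatibility of GAGA with the algebraic $q$-expansion defined via the Tate curve $(\Tate(q^N),\omega_\can,\beta_\can)$, the $q$-expansion of $E_{k+2,0,\varphi}$ at the cusp $\infty$ --- i.e.\ on the component $g=\id$, which under \eqref{q-exp-CD} is its $q$-expansion regarded on $\MarithN$ --- is the series \eqref{eq: q expansion}. Its coefficient of $q^n$ for $n>0$, namely $\sum_{n=dd'}\bigl(d^{k+1}P_1(g\varphi)(d',d)-(-d)^{k+1}P_1(g\varphi)(-d',-d)\bigr)$, is $\bbQ$-linear in the values of $\varphi$, hence rational; and the constant term is, up to the factor $\tfrac12$, the value at the non-positive integer $-1-k$ of the $L$-series attached to a $\bbQ$-valued $N$-periodic function on $\bbZ$, which is rational by the classical formula for such values in terms of (generalized) Bernoulli numbers. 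Thus the $q$-expansion lies in $\bbQ((q))$. Since taking global sections of the vector bundle $\ul\omega^{\otimes(k+2)}$ commutes with flat base change and the $q$-expansion map of Theorem \ref{q-exp-thm} is injective and base-change compatible, a routine linear-algebra argument over the field extensions involved shows that the section produced over $\bbC$ already lies in $\Gamma(\MarithN_\bbQ,\ul\omega^{\otimes(k+2)})$, hence via \eqref{q-exp-CD} in $\Gamma(\MN_\bbQ,\ul\omega^{\otimes(k+2)})$. This is the asserted modular form, and by construction (together with the identity $E_{k+2,0,\varphi}(\tau,g)=E_{k+2,0,g\varphi}(\tau,\id)$ used in the preceding lemma) its $q$-expansion on the component $g$ is \eqref{eq: q expansion}.

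The step requiring the most care is not a deep input but the bookkeeping that reconciles the three incarnations of ``the $q$-expansion'': the classical Fourier expansion of the analytic form at the cusp labelled by $g$, the algebraic $q$-expansion defined through $(\Tate(q^N),\omega_\can,\beta_\can)$ on $\MarithN$, and the expansion on the component $g$ of $\MN(\bbC)$. One must check that the normalizations --- the exponent $N$ in $q^N$, the canonical differential $\omega_\can$, and the local parameter $e^{2\pi i\tau/N}$ at $\infty$ --- are all consistent, which is exactly what \cite{Ka3} 3.2.5 (and the reduction to the cusp $\id$ in the preceding lemma) accomplish. The rationality of the constant term (a partial-zeta value at a non-positive integer) is classical, the convergence and modularity of the series are elementary for weight $\ge 3$, and the descent to $\bbQ$ is the direct application of the $q$-expansion principle already signalled in the text.
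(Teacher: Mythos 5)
Your argument is correct and follows the same route the paper takes: the paper's entire justification is the one sentence preceding the proposition, namely that the $q$-expansion \eqref{eq: q expansion} has rational coefficients (the constant term being an $L$-value at a non-positive integer, hence a Bernoulli-type rational number) and that the $q$-expansion principle of Theorem \ref{q-exp-thm} together with \eqref{q-exp-CD} then yields descent to $\bbQ$, with the result itself attributed to Katz \cite{Ka3} 3.6.9. Your write-up merely fills in the convergence, modularity, GAGA, and descent bookkeeping that the paper leaves implicit.
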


%
\subsection{Determination of the de Rham Eisenstein class}
%

We will determine in this paragraph the exact relation between the
the Eisenstein series $E_{k+2,0,\varphi}$ and the de Rham Eisenstein class
$\Eis^{k+2}_\dR(\varphi)$.

Consider the Eisenstein series $E_{k+2,0,\varphi}$ and form the section 
	$$
		E_{k+2,0,\varphi}\frac{dq}{q}\wedge dz_1\wedge\ldots\wedge dz_k
	$$
of $\Gamma(\MN_\bbQ,\Omega^1\otimes\ul\omega^{\otimes k})$. Using the
Kodaira-Spencer isomorphism $\Omega^1\isom\ul\omega^{\otimes 2}$ one can consider this also as
a section of $\ul\omega^{\otimes k+2}$.
\begin{proposition} 
	The Eisenstein class in de Rham cohomology $\Eis^{k+2}_\dR(\varphi)$ is given by
	\begin{equation}\label{eq: de Rham Eisenstein}
		\Eis^{k+2}_\dR(\varphi)=\frac{2}{N^{k+1}k!}E_{k+2,0,\varphi}\frac{dq}{q}\wedge dz_1\wedge\ldots\wedge dz_k.
	\end{equation}
\end{proposition}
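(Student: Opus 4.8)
The plan is to observe that both sides of \eqref{eq: de Rham Eisenstein} lie in the space $\Gamma(\MN,\ul\omega^{\otimes k+2})$ of weight-$(k+2)$ modular forms, and then to pin down their difference by a residue computation, using that this difference has no cuspidal component. The left-hand side lies in $F^0H^1_\dR(M,\Sym^k\sH(1))=\Gamma(M,\ul\omega^{\otimes k}\otimes\Omega^1_M)$ by \eqref{eq: EisFil0}, and the right-hand side lies there by Proposition \ref{Eis-q-expansion} together with the Kodaira--Spencer identification $\Omega^1_M\isom\ul\omega^{\otimes 2}$; so it suffices to prove the equality inside $\Gamma(\MN,\ul\omega^{\otimes k+2})$.

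First I would reduce the identity to a comparison of constant terms at the cusps. Recall from \cite{BL} that the de Rham realization of the elliptic polylogarithm --- equivalently, the $F^0$-part of its real-analytic Hodge realization --- is a \emph{holomorphic} Eisenstein series; hence both $\Eis^{k+2}_\dR(\varphi)$ and, trivially, the right-hand side of \eqref{eq: de Rham Eisenstein} lie in the Eisenstein subspace of $\Gamma(\MN,\ul\omega^{\otimes k+2})$. By the localization sequence of \S\ref{deRhamEis}, whose kernel is $H^{k+1}_\dR(\ol E^k)(\varepsilon)$ --- the interior cohomology, pure of weight $k+1$ and containing no nonzero Eisenstein class --- a holomorphic Eisenstein series is determined by $\res^k$, equivalently by the constant terms of its $q$-expansions at the cusps. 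Alternatively one may invoke the $q$-expansion principle \ref{q-exp-thm} together with the $\GL_2(\bbZ/N)$-equivariance of both $\varphi\mapsto\Eis^{k+2}_\dR(\varphi)$ and $\varphi\mapsto E_{k+2,0,\varphi}$ (the latter via $E_{k+2,0,\varphi}(\tau,g)=E_{k+2,0,g\varphi}(\tau,\id)$) to reduce to the cusp $\infty$. Either way, it is enough to check that $\res^k$ of the two sides agree.

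Then I would carry out the residue computation. On the left, Corollary \ref{cor: resEisdR} together with the second expression for the horospherical map in Definition \ref{horospherical} gives
\[
	\res^k(\Eis^{k+2}_\dR(\varphi))(g)=\frac{-1}{N^{k-1}}\rho^k(\varphi)(g)=\frac{1}{N^{k}k!}\,L(P_1(g\varphi),-k-1).
\]
On the right, under the Kodaira--Spencer isomorphism and the identification $H^k_\dR(\ol E^k_\Cusp)(\varepsilon)\isom H^0_\dR(\Isom)^{(k)}$ of Lemma \ref{pro: dR sym} (with its trivialization $\Isom_\infty=\mu_{2,\infty}$ and the chosen basis $dz_1,\dots,dz_k$), the residue of $E_{k+2,0,\varphi}\tfrac{dq}{q}\wedge dz_1\wedge\cdots\wedge dz_k$ at the cusp $g$ is the constant term of the $q$-expansion \eqref{eq: q expansion}, namely $\tfrac12 L\bigl(-1-k,\,P_1(g\varphi)(0,m)-(-1)^{k+1}P_1(g\varphi)(0,-m)\bigr)$, up to an explicit scalar. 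Reconciling the two $L$-value normalizations --- using the relations $P_2(\wh\varphi^t)=P_1(\varphi)$ and $P_2(\varphi^t)=P_1(\varphi)^t$ and the functional equation $L(P_2(\varphi),1-k)=\tfrac{(-1)^k2N^k(k-1)!}{(2\pi i)^k}L(\varphi,k)$ recalled in \S\ref{deRhamEis} --- should produce precisely the constant $\tfrac{2}{N^{k+1}k!}$, so the two residues coincide, finishing the plan.

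The hard part will be the bookkeeping in this last step: one must track \emph{all} normalizing constants through the Kodaira--Spencer isomorphism $\Omega^1_M\isom\ul\omega^{\otimes 2}$, through the identifications of $H^0_\dR(\ol E^k_\Cusp)(\varepsilon)$ with $\bbQ[\Isom]^{(k)}$ and of $\Isom$ with $\coprod_{P(\bbZ/N)\backslash\GL_2(\bbZ/N)}\Spec\bbZ[1/N,\zeta_N]$, through the canonical trivialization of $\Isom$ at $\infty$, and through the partial and symplectic Fourier transforms, and then reconcile the Hurwitz-type $L$-value in \eqref{eq: q expansion} with the $L$-value appearing in $\rho^k$ via the functional equation --- it is exactly this reconciliation that forces the scalar $2/(N^{k+1}k!)$. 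A subsidiary point, not visible from the residue alone, is the vanishing of the cuspidal component of $\Eis^{k+2}_\dR(\varphi)$; this has to be imported from the explicit Beilinson--Levin description of the de Rham polylogarithm rather than derived here.
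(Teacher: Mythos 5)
Your overall strategy --- both sides are holomorphic Eisenstein series of weight $k+2$, such a series is determined by its residues at the cusps, and the left-hand residue is supplied by Corollary \ref{cor: resEisdR} --- is the same skeleton as the paper's proof, which likewise fixes the class by a residue comparison after observing that $\Eis^{k+2}_\dR(\varphi)$ is a scalar multiple of the Eisenstein symbol. Where you diverge is in how the scalar is extracted. The paper imports from \cite{Bei} 2.1.3 and \cite{HK2} an explicit section $\frac{(2\pi i)^k}{N}\sum_{\gamma\in\pm U(\bbZ)\backslash\Sl_2(\bbZ)}\frac{\rho(\varphi)(\gamma g)}{(c\tau+d)^{k+2}}\frac{dq}{q}\wedge dz_1\wedge\cdots\wedge dz_k$ whose residue is already known to be $\rho(\varphi)$; the residue comparison then identifies $-N^{k-1}\Eis^{k+2}_\dR(\varphi)$ with this section, and the constant $2/(N^{k+1}k!)$ drops out by a term-by-term \emph{inspection} of two lattice sums (the Eisenstein symbol rewritten as $\frac{(-1)^{k+1}N^k(k+1)}{(2\pi i)^{k+2}}\sum\widehat{g\varphi}(m,n)(m+n\tau)^{-k-2}$ versus the defining series of $E_{k+2,0,\varphi}$) --- no $q$-expansion and no functional equation enter at that stage. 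You instead propose to compute $\res^k$ of the right-hand side from the constant term of the $q$-expansion \eqref{eq: q expansion} and then reconcile the two $L$-value normalizations via the functional equation. That is workable, but it requires one input the paper never has to establish: the precise normalization of $\res^k$ on sections of the form $F\,\frac{dq}{q}\wedge dz_1\wedge\cdots\wedge dz_k$ in terms of the constant term of $q_\infty(F)$, through all the identifications of $H^k_\dR(\ol E^k_\Cusp)(\varepsilon)$ with $H^0_\dR(\Isom)^{(k)}$ and the trivialization at $\infty$. You flag this as bookkeeping, which is fair, but be aware it is exactly the content that the paper outsources to Beilinson's residue computation of the Eisenstein symbol; without citing that, you would have to prove it. Two smaller points: your fallback via the $q$-expansion principle does not work as stated, since Theorem \ref{q-exp-thm} needs the full $q$-expansion, not just constant terms, and the full $q$-expansion of $\Eis^{k+2}_\dR(\varphi)$ is not available a priori; and your appeal to Beilinson--Levin for the vanishing of the cuspidal component of $\Eis^{k+2}_\dR(\varphi)$ is the same (necessary) import the paper makes when it asserts the class ``is a multiple of the Eisenstein symbol.''
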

\begin{proof}
	Using \cite{Bei} 2.1.3. and the explicit formula in \cite{HK2} p. 329 top, the 
	section 
	$$
		\frac{(2\pi i)^k}{N}\sum_{\gamma\in \pm U(\bbZ)\backslash \Sl_2(\bbZ)}\frac{\rho(\varphi)(\gamma g)}{(c\tau +d)^{k+2}}
		\frac{dq}{q}\wedge dz_1\wedge\ldots\wedge dz_k
	$$
	of $\Gamma(\MN_\bbQ,\Omega^1\otimes\ul\omega^{\otimes k})$ has residue $\rho(\varphi)$ and represents
	the de Rham realization of the Eisenstein symbol. On the other hand
	$\Eis^{k+2}_\dR(\varphi)$ is a multiple of the Eisenstein symbol and has residue $\frac{-1}{N^{k-1}}\rho(\varphi)$ 
	following \ref{cor: resEisdR}. Thus, $-N^{k-1}\Eis^{k+2}_\dR(\varphi)$ is the Eisenstein symbol. 
	Using the definition of $\rho(\varphi)$ one computes (see \cite{HK2} p. 334 bottom) that the Eisenstein symbol is in fact
	$$
		\frac{(-1)^{k+1}N^k(k+1)}{(2\pi i)^{k+2}}\sum_{(m,n)\in\bbZ^2\setminus(0,0)}
		\frac{\widehat{(g\varphi)}(m,n)}{(m+n\tau)^{k+2}},
	$$
	which is $\frac{-2}{N^{2}k!}E_{k+2,0,\varphi}$ by inspection. Putting everything together gives the desired result. 
\end{proof}

%
%
%
\section{Syntomic class in the ordinary locus}
%
%
%

In this section, we give a characterization of the restriction of the syntomic Eisenstein class
to the ordinary locus.

%
\subsection{Characterization of the ordinary class}
%

We denote again by $M^\ord$ the open subscheme of $M = M(N)_{\bbZ_p}$ 
obtained by removing the zero of the Eisenstein series 
$
	E_{p-1}  \in \Gamma( M, \ul\omega^{\otimes (p-1)}),
$
and we let $\sM^\ord = (M^\ord, \ol M)$.  The main result of this paper is the explicit description of the syntomic Eisenstein
class on the ordinary locus
$$
	\Eis^{k+2}_\syn(\varphi) \in  H^1_\syn(\sM^\ord, \Sym^k \sH(1)).
$$
It is again characterized as in Proposition \ref{pro: syn to dR} by the de Rham class as follows.

\begin{proposition}\label{pro: ord to dR}
	The syntomic Eisenstein class $\Eis^{k+2}_\syn(\varphi)$ restricted to the ordinary locus 
	is the unique class in $ H^1_\syn(\sM^\ord, \Sym^k \sH(1))$
	which maps to the de Rham class 
	$$
		\Eis^{k+2}_\dR(\varphi) \in H^1_\dR(\sM^\ord, \Sym^k \sH)
	$$ 
	with respect to the boundary map
	$$
		H^1_{\syn}(\sM^\ord, \Sym^k \sH(1)) \rightarrow H^1_\dR(\sM^\ord, \Sym^k \sH).
	$$
\end{proposition}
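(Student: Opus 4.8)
The plan is to mirror, almost verbatim, the proof of Proposition~\ref{pro: syn to dR}, replacing the smooth pair $\sM$ by $\sM^\ord$ throughout; the argument then splits into an \emph{existence} part (compatibility of the Eisenstein classes with restriction to the ordinary locus) and a \emph{uniqueness} part (injectivity of the boundary map $H^1_\syn(\sM^\ord, \Sym^k \sH(1)) \to H^1_\dR(\sM^\ord, \Sym^k \sH)$).

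For existence, recall that the overconvergent Frobenius $\frobphi_\sM$ restricts to an overconvergent Frobenius on $\sM^\ord$ compatibly with the open immersion $\sM^\ord \hookrightarrow \sM$. Functoriality of rigid syntomic cohomology with coefficients therefore produces the restriction map $H^1_\syn(\sM, \Sym^k \sH(1)) \to H^1_\syn(\sM^\ord, \Sym^k \sH(1))$, and because the isomorphisms appearing in Proposition~\ref{pro: canonical isom}, Lemma~\ref{pro: rig sym} and Lemma~\ref{pro: dR sym} (and their ordinary-locus counterparts) are natural in the pair, this map sits in a commutative square with the two boundary maps to de Rham cohomology and with the de Rham restriction $H^1_\dR(M_{\bbQ_p}, \Sym^k \sH) \to H^1_\dR(\sM^\ord, \Sym^k \sH)$. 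Since $\Eis^{k+2}_\syn(\varphi)$ on $\sM^\ord$ was defined as the restriction of the class on $\sM$, and likewise $\Eis^{k+2}_\dR(\varphi)$ on $\sM^\ord$ is the restriction of the de Rham class on $M_{\bbQ_p}$, Proposition~\ref{pro: syn to dR} gives at once that the restricted syntomic class maps to the restricted de Rham class.

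For uniqueness I would reproduce the argument of Proposition~\ref{pro: canonical isom}. Besser's descent exact sequence (\cite{Bes1} Remark~8.7.3), applied to the smooth pair $\sM^\ord$ with its overconvergent Frobenius, reads
$$0 \rightarrow H^1_\syn(\sV, H^0_\rig(\sM^\ord, \Sym^k \sH)(1)) \rightarrow H^1_\syn(\sM^\ord, \Sym^k \sH(1)) \rightarrow H^0_\syn(\sV, H^1_\rig(\sM^\ord, \Sym^k \sH)(1)) \rightarrow 0,$$
and the boundary map is the composition of the second arrow with the canonical inclusion $H^0_\syn(\sV, H^1_\rig(\sM^\ord, \Sym^k \sH)(1)) = \ker(1 - \frobphi^*) \hookrightarrow F^0 H^1_\rig(\sM^\ord, \Sym^k \sH)(1) \hookrightarrow H^1_\rig(\sM^\ord, \Sym^k \sH(1)) \cong H^1_\dR(\sM^\ord, \Sym^k \sH)$, using the ordinary analogue of the remark after Lemma~\ref{pro: rig sym} (rigid and de Rham cohomology of $\sM^\ord$ coincide and may be computed on $M^\ord_{\bbQ_p}$). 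The second arrow above is an isomorphism — so the full composition is injective, which yields uniqueness — precisely when $H^0_\rig(\sM^\ord, \Sym^k \sH) = 0$.

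The main obstacle is exactly this vanishing, which is the ordinary-locus version of Remark~\ref{rem: vanishing}. Over the whole Kuga--Sato variety it follows from a Frobenius-weight argument, but over the ordinary locus — a partially proper rigid subspace of $\ol M$ rather than an affinoid — rigid cohomology with coefficients in $\Sym^k \sH$ is more subtle, and establishing $H^0_\rig(\sM^\ord, \Sym^k \sH) = 0$ (e.g.\ by ruling out horizontal sections of $\Sym^k \sH$ over $M^\ord$, or via a slope estimate using the unit-root splitting) is the content of \S \ref{section: 4-3}; granting that result, the proof is complete.
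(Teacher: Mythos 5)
Your proposal matches the paper's own proof: existence follows from Proposition \ref{pro: syn to dR} together with the commutative square of restriction and boundary maps, and uniqueness reduces via the short exact sequence to the vanishing $H^0_\rig(\sM^\ord, \Sym^k \sH(1)) = 0$, which the paper likewise isolates as Lemma \ref{lem: ord vanish} and defers to \S\ref{section: 4-3} (where it is proved exactly by the unit-root-splitting and $q$-expansion argument you anticipate). No gaps; this is essentially the same argument.
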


\begin{proof}
	The fact that the syntomic class maps to the de Rham class follows from Proposition \ref{pro: syn to dR}
	and the fact that the following diagram
	$$
		\begin{CD}
			H^1_{\syn}(\sM, \Sym^k \sH(1)) @>>> H^1_\dR(M_{\bbQ_p}, \Sym^k \sH)\\
				@VVV @VVV\\
			H^1_{\syn}(\sM^\ord, \Sym^k \sH(1)) @>>> H^1_\dR(\sM^\ord, \Sym^k \sH)
		\end{CD}
	$$
	 is commutative, where the vertical maps are the pullbacks.  The boundary map is define as the composition
	 of the surjection in the short exact sequence
	 \begin{multline*}
	 	0 \rightarrow H^1_\syn( \sV, H^0_\rig(\sM^\ord, \Sym^k \sH(1))) 
		\rightarrow H^1_\syn(\sM^\ord, \Sym^k \sH(1)) \\
		\rightarrow H^0_\syn(\sV, H^1_\rig(\sM^\ord, \Sym^k \sH(1)))
		\rightarrow 0
	 \end{multline*}
	 and the injection 
	 $$
	 	H^0_\syn(\sV, H^1_\rig(\sM^\ord, \Sym^k \sH(1))) \hookrightarrow H^1_\dR(\sM^\ord, \Sym^k\sH).
	$$
	By Lemma \ref{lem: ord vanish} below, the boundary map in injective.
	Thus we have the uniqueness.
\end{proof}

What now remains for the proof of Proposition \ref{pro: ord to dR} is the following lemma.

\begin{lemma}\label{lem: ord vanish}
	We have $H^0_\rig(\sM^\ord, \Sym^k \sH(1)) = 0$.
\end{lemma}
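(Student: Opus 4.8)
The plan is to compare $H^0_\rig(\sM^\ord,\Sym^k\sH(1))$ with $H^0_\rig(\sM,\Sym^k\sH(1))$ via the pullback along the open immersion $\sM^\ord\hookrightarrow\sM$, and to show this pullback is an isomorphism. Since $H^0_\rig(\sM,\Sym^k\sH)=0$ is already available — it is used in the proof of Proposition \ref{pro: canonical isom}, where it follows from Remark \ref{rem: vanishing} together with the comparison of rigid with de Rham cohomology — and the Tate twist changes only the Frobenius structure, this yields $H^0_\rig(\sM^\ord,\Sym^k\sH(1))=0$ at once. As everywhere in this section, the relevant range is $k\ge 1$.

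So it remains to show that the pullback
$$
	H^0_\rig(\sM,\Sym^k\sH(1)) \longrightarrow H^0_\rig(\sM^\ord,\Sym^k\sH(1))
$$
is bijective. Here $H^0_\rig$ of the overconvergent $F$-isocrystal $\Sym^k\sH(1)$ is its space of horizontal sections over a strict neighbourhood of the appropriate tube. Injectivity is clear: a horizontal section vanishing on a strict neighbourhood of the tube of $\sM^\ord$ vanishes on that of $\sM$ by analytic continuation, the tube of the (geometrically connected) modular curve being connected. For surjectivity one must extend a horizontal section given over the ordinary part across the finitely many open residue discs $D_s$ around the supersingular points of $M(N)_{\bbF_p}$; a strict neighbourhood of the ordinary tube overlaps each $D_s$ in an annulus near its boundary. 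The key point is that $\Sym^k\sH$ is a \emph{non-singular} convergent $F$-isocrystal on all of $\sM$, not just on $\sM^\ord$: by Definition \ref{def: H syn}, $\sH=R^1\pi_*\bbQ_p(1)$ with $\pi\colon E\to M$ the universal elliptic curve, which is smooth and proper over all of $M$ — in particular over the supersingular points, which are singular only for the Hasse invariant $E_{p-1}$, so that the regular singularities of $\sH$ occur only at the cusps. Hence the horizontal sections of $\Sym^k\sH$ on $D_s$ form a constant local system of finite rank, and the given horizontal section on the annulus above extends over all of $D_s$; gluing over the finitely many supersingular points gives a horizontal section over $\sM$ restricting to the given one.

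The step to justify carefully is surjectivity, i.e.\ the extension of a horizontal section of the convergent $F$-isocrystal $\Sym^k\sH$ across the supersingular residue discs — which rests on a convergent isocrystal admitting a full set of horizontal sections on each residue disc lying over a point of smoothness. An alternative that avoids it is to pass first to de Rham cohomology of the algebraic generic fibre: there is a natural injection $H^0_\rig(\sM^\ord,\Sym^k\sH(1))\hookrightarrow H^0_\dR(M^\ord_{\bbQ_p},\Sym^k\sH)$; on $M(N)_{\bbQ_p}$ the Gauss-Manin connection is everywhere non-singular, so a flat section over $M^\ord_{\bbQ_p}=M(N)_{\bbQ_p}\setminus V(E_{p-1})$ cannot have a pole along the finite set $V(E_{p-1})$ and therefore extends to $M(N)_{\bbQ_p}$; and $H^0_\dR(M(N)_{\bbQ_p},\Sym^k\sH)=0$ by Remark \ref{rem: vanishing}, the de Rham cohomology being computable on $M_{\bbQ_p}$. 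In either approach the essential input is the vanishing over the open modular curve recorded in Remark \ref{rem: vanishing}.
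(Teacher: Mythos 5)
Your main route is correct but genuinely different from the paper's. The paper proves the lemma by direct computation on the ordinary locus itself: it pulls the horizontal section back to the Katz moduli space $\wt\cM$ of trivialized elliptic curves, where $\Sym^k\wt\sH$ is free on the basis $\wt\omega^{\vee n}\wt u^{\vee k-n}$, writes $\wt\alpha=\sum_n c_n\wt\omega^{\vee n}\wt u^{\vee k-n}$, and converts $\nabla\wt\alpha=0$ into the system $N\theta(c_0)=0$, $N\theta(c_n)=-(k-n+1)c_{n-1}$; since $N\theta$ acts as $q\,d/dq$ on $q$-expansions, an inductive constant-term argument forces $c_0=\cdots=c_{k-1}=0$ and $c_k$ constant, and the invariance of $\wt\alpha$ under $[a,1]$, $a\in\bbZ_p^\times$, kills $c_k$ (this is where $k\ge 1$ enters, matching your caveat). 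You instead reduce to the vanishing over the whole modular curve (Remark \ref{rem: vanishing} plus the de Rham--rigid comparison) by extending the overconvergent horizontal section across the supersingular residue discs, using that $\Sym^k\sH$ is a convergent isocrystal on all of $M_k$ and is therefore constant on each residue disc, so that a horizontal section on the boundary annulus is a constant vector and extends. That key fact is standard (it is exactly the convergence/Taylor-isomorphism property of convergent isocrystals over a smooth point), so your first argument is complete modulo citing it. It buys a shorter proof that imports the global de Rham input; the paper's computation is self-contained within the $\wt\cM$/$q$-expansion machinery that it must develop anyway for Theorem \ref{thm: main}, and is the same technique reused there.

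The ``alternative that avoids it'' at the end, however, does not avoid anything: there is no evident natural map $H^0_\rig(\sM^\ord,\Sym^k\sH(1))\to H^0_\dR(M^\ord_{\bbQ_p},\Sym^k\sH)$. The comparison morphism of the appendix, \eqref{equation: comparison}, goes from de Rham to rigid cohomology, and for $H^0$ the content of its being bijective is precisely that every overconvergent horizontal section is algebraic. For $\sM^\ord$, whose complement in $\ol M$ contains the supersingular points, that is essentially the extension statement you were trying to bypass (or an appeal to the Baldassarri--Chiarellotto comparison theorem, which the paper does not invoke for $\sM^\ord$). So the second route is circular as written; keep the first argument as the actual proof and only use the algebraic-continuation remark, if at all, after the rigid section has been extended across the supersingular discs.
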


The proof of this lemma will be given in  \S \ref{section: 4-3}. 
The advantage of considering $\sM^\ord$ over $\sM$ is that it is equipped with an overconvergent Frobenius
which has a natural interpretation in terms of the moduli problem.  We now proceed to describe this Frobenius.
First, let $M^\ord_\arith$ be the open subscheme of $M_\arith := M_\arith(N)$ obtained by removing the zero of $E_{p-1} 
\in \Gamma(M_\arith, \ul\omega^{\otimes (p-1)})$, and denote by $\cM_\arith^\ord$ the formal completion of 
$M_\arith^\ord$ with respect to the special fiber.  This scheme parameterizes isomorphism classes
of $\Gamma(N)^\arith$ elliptic curves $(E, \beta)$ over $\bbZ_p$ such that $E$ is \textit{ordinary}, in other 
words, the Hasse invariant of $E$ is one.  The lifting of the kernel of the absolute Frobenius of the special 
fiber gives a subgroup $H$ of $E$ of order $p$.  The correspondence
$$
	(E, \beta) \mapsto (E/H, \beta'),
$$
where $\beta'$ is defined as the composition $\mu_N \times \bbZ/N \xrightarrow\beta E \rightarrow E/H$
defines a morphism of moduli spaces
\begin{equation}\label{eq: Frob first}
	\Frob: \cM^\ord_\arith \rightarrow \cM^\ord_\arith
\end{equation}
over $\bbZ_p$.  Denote by $\cM^\ord_{\arith \bbQ_p}$ the rigid analytic space over $\bbQ_p$
associated to the formal scheme $\cM^\ord_\arith$.
 By \cite{Ka1} Theorem 3.1, the construction of $H$ is known to extend to a certain
strict neighborhood $U$ of $\cM^\ord_{\arith  \bbQ_p}$ in $\ol M^\an_{\bbQ_p}$.
Hence $\Frob$ also extends to this strict neighborhood, implying that $\Frob$ is in fact 
overconvergent in the sense of Definition \ref{def: overconvergent Frobenius}.

Let $\cM^\ord$ be the formal completion of $M^\ord$ with respect to the special fiber.  Then
we define $\phi_{\cM^\ord} : \cM^\ord \rightarrow \cM^\ord$ to be the Frobenius on $\cM^\ord$ 
defined as the tensor product 
\begin{equation}\label{eq: phi first}
	\phi_{\cM^\ord} := \Frob \otimes \sigma
\end{equation}
through the isomorphism
$
	\cM^\ord = \cM^\ord_\arith \otimes_{\bbZ_p} \bbZ_p[\zeta_N],
$
where $\Frob$ is as above and $\sigma$ is the canonical Frobenius on $\bbZ_p[\zeta_N]$
lifting the absolute Frobenius of the special fiber.  This gives a lifting of the absolute Frobenius of the
special fiber of $\cM^\ord$, and  since $\Frob$ is overconvergent, $\phi_{\cM^\ord}$
is also overconvergent.

%
\subsection{Moduli space of trivialized elliptic curves}
%

The difficulty in explicitly describing the cohomology of $\Sym^k \sH$ stems from the fact that $\sH$ 
is only locally free and does not have a free basis over $\cM^\ord_{\bbQ_p}$.  We introduce here a certain
$p$-adic universal covering over $\cM^\ord_{\bbQ_p}$ such that the pull-back of $\sH$ to this covering
is free.

Suppose $B$ is a $p$-adic ring, i.e., a ring which is complete 
and separated in the $p$-adic topology. A \emph{trivialized elliptic curve}
$(E/B,\eta)$ is a pair consisting of an elliptic curve $E/B$ together with
an isomorphism of formal groups 
$$
	\eta:\wh E\isom \wh\bbG_m
$$
over $B$.  Let $N \geq 3$ be an integer prime to $p$.   We let 
$$
	\wt \cM_\arith := M(\Gamma_{00}(p^\infty) \cap \Gamma(N)^\arith)_{\bbZ_p},
$$
which parameterizes the isomorphism class of trivialized elliptic curves over $\bbZ_p$
with $\GarithN$-structure.  This is an affine scheme, and since any trivialized $E/B$ is
ordinary, $\wt \cM_\arith$ is a covering of $\cM_\arith$.  We let
$$
	V(\bbZ_p,\GarithN) := \Gamma( \wt \cM_\arith, \cO_{\wt \cM_\arith}).
$$
An element $F\in V(\bbZ_p,\GarithN)$ is called a \emph{generalized $p$-adic modular form}.
For any $p$-adic ring $B$ the above functor restricted to $B$ is represented by
$$
	V(B,\GarithN):=V(\bbZ_p,\GarithN)\otimes_{\bbZ_p} B.
$$
We let $G(N):= \bbZ_p^\times \times (\bbZ/N\bbZ)^\times$. Then the group $G(N)$ acts on 
$F\in V(B,\GarithN)$ by the formula
$$
	[a,b]F(E,\eta,\beta):=F(E,a^{-1}\eta,\beta\verk (b,b^{-1})).
$$
Let $\chi:G(N)\to B^\times$ be a continuous character.  We say that an element $F \in V(B,\GarithN)$ is
\emph{of weight $\chi$}, if
$$
	[a,b]F=\chi(a,b)F
$$
for all $(a,b)\in G(N)$. If $\chi$ is of the form $\chi_k\rho$, where $\rho$ is a character of
finite order on $G(N)$ and $\chi_k(a,b)=a^k$, then one calls $F$ of weight $k$ and Nebentypus $\rho$.

A trivialization $\eta:\wh E\isom \wh\bbG_m$ induces a differential $\omega_\eta$ on $E$ by pulling
back the standard differential $dT/(1+T)$ on $\wh\bbG_m$. This differential $\eta^*(dT/(1+T))$ is 
necessarily the restriction of a differential $\omega_\eta$ on $E$. If $B/\bbZ_p$ is flat, then
conversely the trivialization $\eta$ is uniquely determined by $\omega_\eta$. For this let 
$u$ be a formal parameter for $\wh E$ and integrate $\omega_\eta$ formally over $B\otimes \bbQ$, i.e.,
writing $\omega_\eta=d\Psi(u)$ with $\Psi(u)=\sum_{n\ge 1}a_nu^n $ with $a_n\in B\otimes \bbQ$.
Then $u\mapsto \exp(\Psi(u))$ gives the trivialization $\eta$. This construction 
$$
	(E,\eta,\beta)\mapsto (E,\eta^*(dT/(1+T)),\beta)
$$
induces a homomorphism
\begin{align}
	\Gamma(\MarithN,\shomega^{\otimes k+2})&\to V(\bbZ_p,\GarithN)\\
\nonumber  F&\mapsto \wt F
\end{align}
where $\wt F(E,\eta,\beta):=F(E,\eta^*(dT/(1+T)),\beta)$.  Thus a modular form in the
usual sense naturally gives a  generalized $p$-adic modular form.   We 
define a graded subring $GV^{\cdot} (\bbZ_p,\GarithN)\subset V(\bbZ_p,\GarithN)$.
We say that $F\in GV^k(\bbZ_p,\GarithN)$ iff for all $a\in \bbZ_p^\times$, we have
$$
	[a,1]F=a^kF.
$$

Finally note that we have a $q$-expansion principle. The Tate curve 
$$
	(\Tate(q^N),\omega_\can,\beta_\can)
$$
viewed over $\wh{\bbZ_p((q))}$, the $p$-adic completion of $\bbZ_p((q))$, has a canonical 
trivialization by noting that its formal group is by construction $\wh\bbG_m$. Evaluation at
$\Tate(q^N)$ then defines an injective $q$-expansion map
$$
	q_\infty:V(B,\GarithN)\hookrightarrow \wh{B((q))}.
$$
by construction this $q$-expansion is compatible with the $q$-expansion for 
$\Gamma(\MarithN,\shomega^{\otimes k+2})$.

Similarly, we let
$$
	\wt \cM = M(\Gamma_{00}(p^\infty) \cap \Gamma(N))_{\bbZ_p},
$$
which parameterizes the isomorphism class of trivialized elliptic curves over $\bbZ_p$
with a $\Gamma(N)$-structure.   Note that we have an isomorphism
\begin{equation}\label{eq: parts}
	\wt \cM \cong \wt \cM_\arith \otimes_{\bbZ_p} \bbZ_p[\zeta_N].
\end{equation}
We let
$
	V(\bbZ_p, \Gamma(N)) := \Gamma(\wt \cM, \cO_{\wt \cM}).
$
Then the isomorphism \eqref{eq: parts} implies that
$$
	V(\bbZ_p, \Gamma(N)) = V(\bbZ_p, \GarithN) \otimes_{\bbZ_p} \bbZ_p[\zeta_N].
$$

%
\subsection{The Frobenius and the Gauss-Manin connection}
%

We first describe the Frobenius $\Frob$ and $\phi_{\wt\cM}$ on $\wt\cM_\arith$ and $\wt\cM$
lifting the Frobenius $\Frob$ and $\phi_{\cM^\ord}$ on $\cM^\ord_\arith$ and $\cM^\ord$.  Then
we discuss the Frobenius and the Gauss-Manin connection on $\sH$.

Let $(E,\eta,\beta)$ be a trivialized $\GarithN$-elliptic curve. Define
$$
	E':=E/\eta^{-1}(\mu_p)
$$
and let $\pi:E\to E'$ be the canonical map. Then $\pi^t:E'\to E$ is \'etale
and we define $\eta':=\eta\verk\pi^t$. As usual a $\GarithN$-structure $\beta$ on
$E$ gives rise to a $\GarithN$-structure $\beta'$ on $E'$ (see \cite{Ka3} 5.5.0.)
We define the Frobenius endomorphism
$$
	\Frob: \wt \cM_\arith \rightarrow \wt \cM_\arith
$$
to be the morphism induced from $(E, \eta, \beta) \mapsto (E', \eta', \beta')$.
This morphism naturally lifts the Frobenius morphism \eqref{eq: Frob first},
and induces the morphism
$
	\Frob :  V(B,\GarithN)\to V(B,\GarithN)
$
on the global section of  $\wt \cM$ given by
$$
	\Frob \, F(E,\eta,\beta):=F(E',\eta',\beta').
$$
As $\Frob(\Tate(q^N),\omega_\can,\beta_\can)=(\Tate(q^{pN}),\omega_\can,\beta_\can)$
the effect on the $q$-expansion is $\Frob F(q)=F(q^p)$. Note finally (\cite{Ka3} 5.5.8.) 
that $\Frob $ commutes with the action of $G(N)$.  For the case of full level $N$-structure, the Frobenius 
\begin{equation}\label{eq: Frob full level}
	\frobphi_{\wt\cM}: \wt \cM \rightarrow \wt \cM
\end{equation}
on
$
	\wt \cM \cong \wt \cM_\arith \otimes_{\bbZ_p} \bbZ_p[\zeta_N].
$
is given as the tensor product $\frobphi_{\wt\cM} := \Frob \otimes \sigma$, where $\Frob$ is as above and $\sigma$
is the Frobenius on $\bbZ_p[\zeta_N]$.

For each $N\ge 3$ one can define a derivation $N\theta:V(\bbZ_p,\GarithN)\to V(\bbZ_p,\GarithN)$
by using the square of the canonical form $\eta^*(dT/(1+T))$ and the Kodaira-Spencer isomorphism
$\shomega^{\otimes 2}\isom \Omega^1_{\wt\cM_\arith/\bbZ_p}$ to define a global section of 
$\Omega^1_{\wt\cM_\arith/\bbZ_p}$.  The derivation $N\theta$ is then the dual of this global section. We recall from
\cite{Ka3} 5.8.1. the main property of $N\theta$. The following diagram commutes
\begin{equation}\label{eq: N theta}
	\begin{CD}
		V(\bbZ_p,\GarithN)@>N\theta >>V(\bbZ_p,\GarithN)\\
		@Vq_\infty VV@V q_\infty VV\\
		\wh{\bbZ_p((q))}@>q\frac{d}{dq} >>\wh{\bbZ_p((q))}.
	\end{CD}
\end{equation}
Moreover, for $(a,b)\in G(N)$ one has
$$
	[a,b]\verk N\theta=a^2N\theta\verk[a,b].
$$
The same derivation is defined also for $V(\bbZ_p, \Gamma(N))$.

We now consider the filtered overconvergent Frobenius isocrystal $\sH$ on $\cM^\ord_{\bbQ_p}$.
Let $\sH := R^1 \pi_* \bbQ_p(1)$ as in Definition \ref{def: H syn}, and we denote by $\wt \sH$ the
pull back of $\sH$ to $\wt \cM_{\bbQ_p}$.  
We now explicitly calculate the Frobenius and the Gauss-Manin connection on $\wt\sH$.

We denote by $\wt E$ the universal elliptic curve over $\wt \cM$.  Then this curves
has a universal trivialization
$$
		\eta: \wh\bbG_m  \cong \wt E
$$
over $\wt \cM$ which gives rise to a canonical section $\wt\omega$ of $\ul\omega$, characterized
by the property that $\wt\omega$ restricts to $\eta^*(dT/(1+T))$ on $\wh\bbG_m$.
On the Tate module, $\wt\omega$
coincides with the canonical differential $\omega_\can$ in the usual sense.
Since the scheme $\wt \cM$ is affine,  we may take sections $x$ and $y$ on 
$\wt E$ such that $\wt E$ is defined by the Weierstrass equation
$$
	\wt E: y^2 = 4 x^3 - g_2 x - g_3, \qquad g_2, g_3 \in V(\bbZ_p, \GarithN)
$$
and $\wt\omega = dx/y$. We let $\wt\eta := x dx/y$.  Then $\{ \wt\omega, \wt\eta\}$ 
form a basis of $\wt\sH^\vee$.  This choice gives a splitting
$$
	\wt\sH^\vee \cong \ul\omega \bigoplus \ul\omega^{-1}.
$$
By \cite{Ka1} Lemma (A2.1), the Frobenius on this module acts  as
$$
	\Phi \begin{pmatrix} \wt\omega \\   \wt\eta \end{pmatrix}  =
	\begin{pmatrix}   p/\lambda  & 0  \\  c   &  \lambda  \end{pmatrix}
	\begin{pmatrix} \wt\omega \\   \wt\eta \end{pmatrix}
$$
for some $\lambda$ invertible in $V(\bbZ_p,\Gamma(N))$.  By looking at the Frobenius action 
given in \cite{Ka1} (A2.2.6) of the restriction of this module to the cusp (which amounts to looking at the $q$-expansion), 
we see that in fact $\lambda = 1$ in a neighborhood of the cusp, hence globally due to the 
$q$-expansion principle. 

By a theorem due to Dwork (see \cite{Ka1} Theorem A2.3.6), there exists a Frobenius compatible splitting 
$$
\xymatrix{
	0 \ar[r] & \ul\omega \ar[r] & \wt\sH^\vee \ar[r] &
	\ul\omega^{-1} \ar@/_/[l]  \ar[r] & 0.
}
$$
The image $\wt u$ of the basis $\wt\eta$ is a horizontal section of $\wt\sH$, stable by the Frobenius $\Phi$.
The section $\wt u$ generates the unit root part $U$ of $\wt\sH$.  Using this basis, we see that the Frobenius on 
$\wt\sH$ acts as
$$
	\Phi \begin{pmatrix} \wt\omega \\   \wt u \end{pmatrix}  =
	\begin{pmatrix}   p  & 0  \\  0   &  1 \end{pmatrix}
	\begin{pmatrix} \wt\omega \\   \wt u \end{pmatrix}.
$$

We denote by $\xi$ the differential form in $\Omega^1_{\wt\cM/\bbZ_p}$ which corresponds to 
$\wt\omega^{\otimes 2}$ through the Kodaira-Spencer isomorphism 
$
	 \ul\omega^{\otimes 2} \cong \Omega^1_{\wt\cM/\bbZ_p}.
$
This $\xi$ is the dual basis
of the differential operator $N \theta$ given above.
If we express the Gauss-Manin connection using the basis $\{ \wt\omega, \wt u \}$, we have
$$
	\nabla \begin{pmatrix} \wt\omega \\   \wt u \end{pmatrix}  =
	A
	\begin{pmatrix} \wt\omega \\   \wt u \end{pmatrix} \otimes \xi
$$
for some $2\times2$-matrix $A$ whose components are in $V(\bbQ_p, \Gamma(N))$.  Then we see by looking
near the cusps that
$$
	A = \begin{pmatrix} 0 & 1 \\ 0 & 0\end{pmatrix}.
$$
Again by the $q$-expansion principle, this holds globally.  Hence we have 
$\nabla(\wt\omega) = \wt u \otimes \xi$ and $\nabla(\wt u) = 0$.
The dual basis $\wt\omega^{\vee}$, $\wt u^\vee$ of $\wt\omega$, $\wt u$ gives a basis of $\wt\sH$, and the connection is given by
$\nabla(\wt\omega^{\vee}) = 0$ and $\nabla(\wt u^\vee) = \wt\omega^\vee$.

%
\subsection{Calculation of Cohomology}\label{section: 4-3}
%

We now give a proof of Lemma \ref{lem: ord vanish}.

\begin{proof}[Proof of Lemma \ref{lem: ord vanish}]
	Consider a class $\alpha \in H^0_\rig(\sM^\ord, \Sym^k \sH(1))$.  Since $M^\ord$ is affine, by definition of 
	rigid cohomology, it is represented by a section
	$$
		\alpha \in \Gamma(M^\ord_{\bbQ_p}, j^\dagger \Sym^k \sH)
	$$
	such that $\nabla(\alpha) = 0$, where $M^\ord_{\bbQ_p}$ is the rigid analytic space associated to $M^\ord_{\bbQ_p}$.  
	If we let $\cM^\ord$ be the formal completion of $M^\ord$ with respect to the special fiber and 
	$\cM^\ord_{\bbQ_p}$ the rigid analytic space over ${\bbQ_p}$ associated to $\cM^\ord$, then we may regard $\alpha$ 
	as an element in $\Gamma(\cM^\ord_{\bbQ_p},  \Sym^k \sH)$ through the natural injection
	$$
		 \Gamma(M^\ord_{\bbQ_p}, j^\dagger \Sym^k \sH) \hookrightarrow  \Gamma(\cM^\ord_{\bbQ_p},  \Sym^k \sH).
	$$
	Furthermore, $\wt\cM_{\bbQ_p}$ is defined over $\cM^\ord_{\bbQ_p}$, and we have a commutative diagram
	$$
		\begin{CD}
			 \Gamma(\cM^\ord_{\bbQ_p},  \Sym^k \sH) @>{\subset}>>   \Gamma(\wt\cM_{\bbQ_p},  \Sym^k \wt \sH)   \\
			 @V{\nabla}VV	@V{\nabla}VV\\
		 	 \Gamma(\cM^\ord_{\bbQ_p},  \Sym^k \sH \otimes \Omega^1_{\cM^\ord_{\bbQ_p}})
			@>{\subset}>>  \Gamma(\wt\cM_{\bbQ_p},  \Sym^k \wt\sH \otimes \Omega^1_{\wt\cM_{\bbQ_p}}).
		\end{CD}	
	$$
	By consideration of the previous section, the module $\Sym^k \wt\sH$ has a basis consisting of
	$\wt\omega^{\vee n} \wt u^{\vee k-n}$ for $0 \leq n \leq k$.  If we denote by $\wt\alpha$ the image of $\alpha$
	in $\Gamma(\wt\cM_{\bbQ_p},  \Sym^k \wt \sH)$, then it is of the form
	$$
		\wt \alpha = \sum_{n=0}^k c_n \wt\omega^{\vee n} \wt u^{\vee k-n}
	$$
	for some functions $c_n \in V(\bbQ_p, \Gamma(N))$.  Since $\nabla(\alpha) = 0$, we have $\nabla(\wt\alpha) = 0$.
	Hence we have
	\begin{multline*}
		\nabla(\wt \alpha) =\left(  \sum_{n=1}^{k}  (k-n+1)  c_{n-1} \wt\omega^{\vee n} \wt u^{\vee k-n} 
		+  \sum_{n=0}^k N \theta(c_n) \wt\omega^{\vee n} \wt u^{\vee k-n} \right) \otimes \xi \\
		=0.
	\end{multline*}
	This gives the differential equations
	$
		N \theta(c_0) = 0
	$
	and
	$$
		N \theta(c_{n}) = - (k-n +1) c_{n-1}
	$$
	for $1 \leq n \leq k$.  By \eqref{eq: N theta}, the differential operator $N \theta$ acts as $q (d/dq)$ on the $q$-expansion.
	Hence the fact that $N \theta(c_0) = 0$ implies that $c_0$ is constant.  Furthermore, since the constant term of 
	$N \theta(c_1)$ with respect to the $q$-expansion must be zero,  the equation $N \theta(c_1) = - k c_0$
 	implies that both sides of this equation must be zero.  Hence we see that $c_0 = 0$ and $c_1$ is constant.  
	By continuing this argument for $1 \leq n \leq k$, we see that $c_0 = c_1 = \cdots = c_{k-1} = 0$ and $c_k$ is constant.  
	Hence we have
	$$
		\wt \alpha = c_k \wt\omega^{\vee k}
	$$
	for some constant $c_k \in {\bbQ_p}$.  Finally, since $\wt \alpha$ is the image of an element 
	$\alpha \in \Gamma(\cM^\ord_{\bbQ_p}, \Sym^k \sH)$, it must be invariant under the action of $[a,1]$ for any
	$a \in \bbZ_p^\times$.  Hence
	$$
		[a,1]^*  \wt \alpha = [a,1]^* (c_k  \wt\omega^{\vee k}) = a^{-k} c_k  \wt\omega^{\vee k} = \wt\alpha = c_k  \wt\omega^{\vee k}
	$$
	for any $a \in \bbZ_p^\times$.  This implies that $c_k = 0$, hence $\wt \alpha = 0$.
	This proves that $\alpha = 0$ as desired.
\end{proof}

%
%
%
\section{$p$-adic Eisenstein series and the syntomic class}
%
%
%

In this section, we introduce the $p$-adic Eisenstein series and prove our main theorem.
We first start with a review of $p$-adic modular forms.

%
\subsection{$p$-adic modular forms}
%

In this section, we define a modified version of Katz measure which will be used to construct
$p$-adic Eisenstein series of negative weights.  We first review the definition of $p$-adic modular 
forms $\Phi_{k,r,f}$ defined by Katz.

\begin{definition}
	We define the $p$-adic modular form $\Phi_{k,r,f}$ as the $p$-adic modular form in 
	$V(\bbZ_p, \Gamma(N)^\arith) \otimes \bbQ_p$ defined in \cite{Ka3} Lemma 5.11.4.
\end{definition}

By definition, we have
\begin{align*}
	\Phi_{k,0,f} &= G_{k+1,0,f},  &
	\Phi_{0,r,f} &= G_{0, r+1, f^t}.
\end{align*}
By \cite{Ka3} Lemma 5.11.0, Definition 5.11.2 and Lemma 5.11.4, 
this function is known to satisfy the $q$-expansion
\begin{multline*}
	2\Phi_{k,0,f} =
	L(-k, f(m,0) - (-1)^{k} f(-m,0)) \\
	+   \sum_{n>0}q^n  \sum_{dd'|n} \left( d^k  f(d,d') - (-d)^k  f(-d,-d') \right)
\end{multline*}
for $k \geq 2$ and
$$
	2\Phi_{k,r,f} = \sum_{n>0}q^n  \sum_{dd'|n} \left( d^r (d')^k f(d,d') - (-d)^r (-d')^k f(-d,-d') \right)
$$
if $r$, $k \geq 1$.

\begin{proposition}\label{pro: mu k}
	We fix an integer $k > 0$.  For $r \geq 0$ and functions $f: (\bbZ/N)^2 \rightarrow \bbZ_p$,
	we let
	$$
		\Phi^{(p)}_{k+1,r,f} :=  \Phi_{k+1,r, f(u,v)} - p^{r} \Frob(\Phi_{k+1, r, f(u,pv)}).
	$$
	Then there exists  a measure $\mu_N^{k+1}$ on $\bbZ_p \times (\bbZ/N)^2$ 
	whose moments are given by
	\begin{equation}\label{eq: moments}
		\int_{\bbZ_p \times (\bbZ/N)^2} y^r f(u,v) d \mu_N^{k+1} = 2 \Phi^{(p)}_{k+1, r, f} 
	\end{equation}
	for any $r \geq 0$ and $f: (\bbZ/N)^2 \rightarrow \bbZ_p$.
\end{proposition}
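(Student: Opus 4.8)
The plan is to construct the measure $\mu_N^{k+1}$ by specifying its moments via the right-hand side of \eqref{eq: moments} and then verifying that these moments actually arise from a genuine $\bbZ_p$-valued (or $p$-adically bounded) measure. Concretely, one reduces to the standard criterion for the existence of a measure: a function on $\bbZ_p \times (\bbZ/N)^2$ whose values on locally constant functions of the form $y^r f(u,v)$ (equivalently, on the monomials $y^r$ times characteristic functions of residue classes $(u,v)$ modulo $N$) are prescribed, extends to a measure precisely when the prescribed values satisfy the Kummer-type congruences coming from the fact that the indicator of a residue class modulo $p^n$ is a $\bbZ_p$-combination of monomials $y^r$. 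So the first step is to make the identification: the data $(r,f) \mapsto 2\Phi^{(p)}_{k+1,r,f}$ should be recognized as the moments of a measure with values in the $p$-adically complete ring $V(\bbZ_p,\Gamma(N)^\arith)$.

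The key input is the Frobenius-stabilization $\Phi^{(p)}_{k+1,r,f} = \Phi_{k+1,r,f(u,v)} - p^r \Frob(\Phi_{k+1,r,f(u,pv)})$, which is exactly the $p$-stabilization that kills the Euler factor at $p$ and makes the $q$-expansion coefficients $p$-integral and congruence-friendly. Using the explicit $q$-expansion recalled just before the proposition,
$$
2\Phi_{k+1,r,f} = \text{(constant term)} + \sum_{n>0} q^n \sum_{dd'|n} \left( d^r (d')^{k+1} f(d,d') - (-d)^r(-d')^{k+1} f(-d,-d') \right),
$$
one computes the $q$-expansion of $\Phi^{(p)}_{k+1,r,f}$: the effect of subtracting $p^r\Frob(\Phi_{k+1,r,f(u,pv)})$ is to restrict the inner sum to those $d$ prime to $p$ (the term $d^r(d')^{k+1}f(d,d')$ survives only for $p\nmid d$, since $\Frob$ sends $q\mapsto q^p$ and the prefactor $p^r$ matches $d^r$ when $d$ is replaced by $pd$). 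Thus the $n$-th coefficient of $2\Phi^{(p)}_{k+1,r,f}$ is
$$
\sum_{\substack{dd'|n \\ p \nmid d}} \left( d^r (d')^{k+1} f(d,d') - (-d)^r(-d')^{k+1}f(-d,-d')\right),
$$
which, viewing $d\in\bbZ_p^\times$, is manifestly of the form $\int y^r f(u,v)\, d(\text{something})$ where the something is the measure on $\bbZ_p^\times \times (\bbZ/N)^2$ (extended by zero to $\bbZ_p\times(\bbZ/N)^2$) built from the elementary measures $d\mapsto d'^{k+1}\delta_{(d,d')}$. Then I would invoke the $q$-expansion principle for $p$-adic modular forms (the injective $q$-expansion map $V(B,\Gamma(N)^\arith)\hookrightarrow \wh{B((q))}$) to transport this measure-theoretic identity from $q$-expansions back to the ring $V(\bbZ_p,\Gamma(N)^\arith)$ itself: a family of $p$-adic modular forms indexed by $(r,f)$ whose $q$-expansions are the moments of a measure is itself the moments of a measure, since the $q$-expansion map is injective and continuous.

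The main obstacle is the integrality/boundedness: one must check that $\Phi^{(p)}_{k+1,r,f}$ lies in $V(\bbZ_p,\Gamma(N)^\arith)$ (not merely in $V(\bbZ_p,\Gamma(N)^\arith)\otimes\bbQ_p$) and, more importantly, that the moments satisfy the Kummer congruences uniformly in $r$ — i.e. that for $y$ ranging over $\bbZ_p^\times$ the quantities $\int y^r f\,d\mu$ are $p$-adically continuous in $r$ in the appropriate sense. This is where the constant term requires care: for $r=0$ and $k+1\geq 2$ there is an $L$-value constant term $L(-k, f(m,0)-(-1)^k f(-m,0))$, and one needs the Kubota–Leopoldt $p$-adic $L$-function / the known $p$-adic interpolation of these Eisenstein constant terms (this is precisely Katz's construction) to see that the stabilized constant terms fit into the measure; for $r\geq 1$ the constant term vanishes and there is nothing to check there. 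I would handle this by citing Katz \cite{Ka3} for the existence of the unstabilized Eisenstein measure and then observing that the stabilization $\Phi\mapsto\Phi^{(p)}$ corresponds on the measure side to the standard operation of restricting a measure on $\bbZ_p$ to $\bbZ_p^\times$ (pushforward by $y\mapsto y$, killing the part supported on $p\bbZ_p$), which visibly preserves the property of being a measure. Assembling these pieces gives the measure $\mu_N^{k+1}$ with the stated moments, completing the proof.
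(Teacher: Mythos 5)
Your overall strategy coincides with the paper's: both reduce to Katz's integrality criterion for $p$-adic measures (\cite{Ka3} Lemma 6.0.9, i.e.\ integrality of the binomial transforms $\sum_{m} c(m,r)\,2\Phi^{(p)}_{k+1,m,f}$, which is exactly the Kummer-congruence condition you describe), verify it on the explicit $q$-expansion of the stabilized forms, and use the injectivity/integrality of the $q$-expansion map to pull the conclusion back to $V(\bbZ_p,\GarithN)$. Your reading of each nonconstant $q$-coefficient of $2\Phi^{(p)}_{k+1,r,f}$ as a finite $\bbZ_p$-combination of Dirac measures at units is precisely the content of the binomial check in the paper (the coefficients $\binom{d'}{r}$ and $\binom{pd'}{r}$ that appear there are integers), so this is a repackaging rather than a different route.

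The one place your plan would not go through as written is the constant term at $r=0$. There is no ``unstabilized Eisenstein measure'' with moments $2\Phi_{k+1,r,f}$ available to cite: Katz's measure $\mu^{(a,b)}_N$ interpolates $2\Phi_{k,r,f}-2[a,b]\Phi_{k,r,f}$, and the twist by $(1-[a,b])$ is there precisely because the untwisted constant terms obstruct measure-hood; since $(1-[a,b])$ cannot be inverted on measures, restricting $\mu^{(a,b)}_N$ to $\bbZ_p^\times$ in the $x$-variable does not yield $\mu^{k+1}_N$. No $p$-adic $L$-function input is needed, however: because $\Frob$ fixes constant terms, $p^{0}=1$, and $f(u,pv)$ agrees with $f(u,v)$ on the locus $v=0$ entering the $L$-value, the constant terms of the two terms defining $\Phi^{(p)}_{k+1,0,f}$ are literally equal and cancel, while for $r\ge 1$ (and weight $k+1\ge 2$, which is where the hypothesis $k>0$ enters) both constant terms vanish outright. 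With that substitution your argument is complete and agrees with the paper's.
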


\begin{proof}
	We use the integrality criterion for $p$-adic measures \cite{Ka3} Lemma 6.0.9.
	By calculation of the $q$-expansion and our choice of $k$, 
	the constant term of  $\Phi_{k+1,r, f}$ is \text{zero} unless $r=0$.  Again by calculation of the $q$-expansion,
	we see that the constant term of $\Phi_{k+1,r,f(u,v)}$ is equal to the constant term of $\Frob(\Phi_{k+1,r, f(u,pv)})$,
	which implies that the constant term of $\Phi^{(p)}_{k+1,r,f}$
	is zero for any $r \geq 0$.    As in the proof of \cite{Ka3} Theorem 6.1.1,
	the integrality of the other terms of the $q$-expansion may be seen as follows.
	If we write $\binom{y}{r} = \sum_{m=0}^r c(n,r) y^m$, then the $q$-expansion of
	$$
		\sum_{m=0}^r c(m,r) 2\Phi^{(p)}_{k+1,m, f}
	$$
	is given by
	\begin{multline*}
		 \sum_{n>0}q^n  \sum_{dd'|n} \left( d^{k+1} \binom{d'}{r}  f(d,d') -  (-d)^{k+1} \binom{-d'}{r} f(-d,-d') \right)\\
		 -  \sum_{n>0}q^{pn}  \sum_{dd'|n} \left(  d^{k+1} \binom{pd'}{r} f(d,pd') - (-d)^{k+1}\binom{-pd'}{r}  f(-d,-pd') \right).
	\end{multline*}
	Hence we see that 
	$
		\sum_{m=0}^r c(m,r) 2\Phi^{(p)}_{m, k+1, f} 
	$ is integral in $V(\bbZ_p, \Gamma(N)^\arith)$.  By \cite{Ka3} Lemma 6.0.9,
	this implies that
	\eqref{eq: moments} defines a $p$-adic measure on $\bbZ_p \times (\bbZ/N)^2$ with values
	in $V(\bbZ_p, \Gamma(N)^\arith)$.  
\end{proof}

\begin{remark}
	Let $(a,b)$ be an element in $G(N) := \bbZ_p^\times \times (\bbZ/N)^\times$.
	In \cite{Ka3} Theorem 6.1.1, Katz defined a $p$-adic measure $\mu^{(a,b)}_N$ on $\bbZ_p^2 \times (\bbZ/N)^2$
	satisfying the interpolation property
	$$
		\int_{\bbZ_p^2 \times (\bbZ/N)^2} x^k y^r d \mu_N^{(a,b)}  = 2 \Phi_{k,r,f} - 2 [a,b] \Phi_{k,r,f},
	$$ 
	where $[a,b]$ denotes the action of $G(N)$ on $V(\bbZ_p, \Gamma(N)^\arith)$ given in \cite{Ka3} 5.3.2.
	The relation of our measure $\mu_N^{k+1}$ to $\mu_N^{(a,b)}$ is given by the formula
	$$
		(1 - [a,b]) \int_{\bbZ_p \times (\bbZ/N)^2} \psi(y) d \mu_N^{k+1}  =	
		\int_{\bbZ_p \times \bbZ_p^\times \times (\bbZ/N)^2} x^{k+1} \psi(y) d \mu_N^{(a,b)}.
	$$
\end{remark}

%
\subsection{Eisenstein series of negative weight}
%

Using the measure $\mu_N^{k+1}$ defined in the previous section, we define the $p$-adic Eisenstein
series of negative weight.  The following result is important in defining such Eisenstein series.

\begin{lemma}
	The measure $\mu_N^{k+1}$ defined in Proposition \ref{pro: mu k} has support on $\bbZ_p^\times \times (\bbZ/N)^2$.
\end{lemma}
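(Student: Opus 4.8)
The plan is to show that the restriction of $\mu_N^{k+1}$ to $p\bbZ_p\times(\bbZ/N)^2$ is the zero measure. Since a $p$-adic measure on $\bbZ_p\times(\bbZ/N)^2$ is a continuous linear functional on continuous functions, and the functions $y^r f(u,v)$ with $r\ge 0$ and $f\colon(\bbZ/N)^2\to\bbZ_p$ span a dense subspace (Mahler's theorem in the $\bbZ_p$-variable), it suffices to prove that
\[
	\int_{\bbZ_p\times(\bbZ/N)^2}\mathbf 1_{p\bbZ_p}(y)\,y^r f(u,v)\,d\mu_N^{k+1}=0
\]
for all $r\ge 0$ and all such $f$, where $\mathbf 1_{p\bbZ_p}$ denotes the characteristic function of $p\bbZ_p$.

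The first step is to record the $q$-expansion of the moments $2\Phi^{(p)}_{k+1,r,f}$ in combined form. This is essentially contained in the proof of Proposition \ref{pro: mu k}: after reindexing the $\Frob$-term there (replacing $q^{pn}$ by $q^n$ and the divisor $d'$ by $pd'$), one sees that the $p$-stabilization $\Phi^{(p)}_{k+1,r,f}=\Phi_{k+1,r,f(u,v)}-p^r\Frob\bigl(\Phi_{k+1,r,f(u,pv)}\bigr)$ has exactly the effect of deleting from the divisor sum every term with $p\mid d'$. Concretely, with vanishing constant term,
\[
	2\Phi^{(p)}_{k+1,r,f}=\sum_{n>0}q^n\sum_{\substack{dd'\mid n\\ p\,\nmid\,d'}}\bigl(d^{k+1}(d')^r f(d,d')-(-d)^{k+1}(-d')^r f(-d,-d')\bigr),
\]
where $d'$ is the divisor identified with the measure variable $y$.

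The second step combines this with the uniform limit $\mathbf 1_{p\bbZ_p}(y)\,y^r=\lim_{m\to\infty}\bigl(y^r-y^{r+(p-1)p^m}\bigr)$ on $\bbZ_p$, valid because $y^{(p-1)p^m}\to\mathbf 1_{\bbZ_p^\times}(y)$ uniformly. Applying the continuous functional $\mu_N^{k+1}$ term by term gives
\[
	\int\mathbf 1_{p\bbZ_p}(y)\,y^r f(u,v)\,d\mu_N^{k+1}=2\Phi^{(p)}_{k+1,r,f}-\lim_{m\to\infty}2\Phi^{(p)}_{k+1,\,r+(p-1)p^m,\,f}.
\]
Because every divisor $d'$ appearing in the $q$-expansion above is prime to $p$, both $d'$ and $-d'$ are units in $\bbZ_p$, so $(\pm d')^{r+(p-1)p^m}\equiv(\pm d')^r\pmod{p^{m+1}}$; hence the $q$-expansion of $2\Phi^{(p)}_{k+1,\,r+(p-1)p^m,\,f}-2\Phi^{(p)}_{k+1,r,f}$ has all its coefficients in $p^{m+1}\bbZ_p$. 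By the $q$-expansion principle for generalized $p$-adic modular forms this difference lies in $p^{m+1}V(\bbZ_p,\GarithN)$, hence tends to $0$; so the limit equals $2\Phi^{(p)}_{k+1,r,f}$ and the integral vanishes, as required.

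I expect the only point needing care to be the very last step: converting coefficientwise $p$-adic smallness of the $q$-expansions (with a uniform rate) into genuine convergence in $V(\bbZ_p,\GarithN)\otimes\bbQ_p$. This rests on the integrality form of the $q$-expansion principle, namely that $q_\infty$ is injective on $V(\bbZ_p,\GarithN)\otimes\bbQ_p$ and that an element of $V(\bbZ_p,\GarithN)\otimes\bbQ_p$ whose $q$-expansion is $p$-integral already lies in $V(\bbZ_p,\GarithN)$. Everything else is the same divisor-sum bookkeeping as in Proposition \ref{pro: mu k}, together with the elementary fact that $\mathbf 1_{p\bbZ_p}$ is a uniform limit of the polynomials $1-y^{(p-1)p^m}$.
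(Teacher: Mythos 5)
Your proof is correct, but it takes a genuinely different route from the paper's. The paper expresses the restriction of the measure to $p\bbZ_p\times(\bbZ/N)^2$ by averaging over $p$-th roots of unity, $\int_{p\bbZ_p}\psi\,d\mu=\frac1p\sum_{\zeta_p\in\mu_p}\int_{\bbZ_p}\zeta_p^{y}\psi(y)\,d\mu$, extends the moment formula "by continuity" to arbitrary continuous $\psi$, and then checks that after the averaging the two divisor sums in the $q$-expansion cancel term by term. You instead (i) reduce to the monomial test functions $y^rf(u,v)$ by density and boundedness of the functional, (ii) approximate the indicator of $p\bbZ_p$ by the polynomials $1-y^{(p-1)p^m}$ via Fermat--Euler, and (iii) observe that the $p$-stabilization $\Phi^{(p)}_{k+1,r,f}$ has a single combined $q$-expansion running only over divisors $d'$ prime to $p$, so that raising $d'$ to the exponent $(p-1)p^m$ changes the coefficients only by elements of $p^{m+1}\bbZ_p$. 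Step (iii) is really the conceptual heart of the statement and your reorganized formula makes the support property visible before any test function is applied, which is arguably cleaner than the paper's cancellation computation; the price is the extra reduction to monomials and the need, which you correctly flag, for the integrality form of the $q$-expansion principle (or, alternatively, the continuity of $q_\infty$ on the $p$-adically complete ring $V(\bbZ_p,\GarithN)$ together with the fact that the limit of the integrals already exists by the measure property) to convert the uniform coefficientwise bound into convergence in $V(\bbZ_p,\GarithN)\otimes\bbQ_p$. Both of these ingredients are available from Katz, so there is no gap.
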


\begin{proof}
	We prove that 
	$$
		\int_{p \bbZ_p \times (\bbZ/N)^2} \psi(y) f(u,v) d \mu_N^{k+1}  =0
	$$
	for any continuous function $\psi : \bbZ_p \rightarrow \bbZ_p$ and $f : (\bbZ/N)^2 \rightarrow \bbZ_p$.
	By continuity, the $q$-expansion of $\int_{\bbZ_p \times (\bbZ/N)^2}  \psi(y) f(u,v) d \mu_N^{k+1}$
	is given by
	\begin{multline*}
		\sum_{n>0}q^n  \sum_{dd'|n} \left( d^{k+1} \psi(d')  f(d,d') - (-d)^{k+1} \psi(-d') f(-d,-d') \right) \\
		- \sum_{n>0}q^{pn}  \sum_{dd'|n} \left( d^{k+1} \psi(pd')  f(d,pd') - (-d)^{k+1}\psi(-pd')  f(-d,-pd') \right)
	\end{multline*}
	Note that we have
	$$
		\int_{p \bbZ_p \times (\bbZ/N)^2} \psi(y) f(u,v) d \mu_N^{k+1}  =
		\frac{1}{p} \sum_{\zeta_p \in \mu_p} \int_{\bbZ_p \times (\bbZ/N)^2} \zeta^y_p \psi(y) f(u,v) d \mu_N^{k+1},
	$$
	where $\zeta_p$ is a primitive $p$-th root of unity.  By applying the $q$-expansion formula to the function 
	$\widetilde{\psi}(y) = \zeta_p^y \psi(y)$ and noting that $\sum_{\zeta_p} \zeta_p^y = p$ is $p|y$ and $=0$ 
	otherwise, we see by calculating the $q$-expansion that the right hand side of the above equality
	is zero.  Hence we have our assertion.
\end{proof}

Using the above fact, we define the $p$-adic Eisenstein series $\Phi^{(p)}_{k+1,r, f}$ for $k>0$ and $r< 0$ as follows.

\begin{definition}
	For any integers $k>0$ and $r \in \bbZ$, we define the $p$-adic Eisenstein series $\Phi^{(p)}_{k+1,r, f}$ to be
	the $p$-adic modular form such that
	$$
		2 \Phi^{(p)}_{k+1,r, f} := \int_{\bbZ_p^\times \times (\bbZ/N)^2} y^r f(u,v) d \mu_N^{k+1}
	$$
	in $V(\bbZ_p, \Gamma(N)^\arith)$.
\end{definition}

Recall Lemma \ref{lem: comparison Katz} that the Eisenstein series $E_{k+2,0,\varphi}$ is related to $G_{k,0,f}$ through the formula
$$
	E_{k+2,0,\varphi}(\tau,g)=G_{k+2,0,P_1(\wh{g\varphi})}(\tau).
$$
We define a $p$-adic version $E^{(p)}_{k+2,r,\varphi}$ as follows.

\begin{definition}
	For any integers $k$, $r$ such that $k>0$ and $\varphi : (\bbZ/N)^2 \rightarrow \bbZ_p$,
	we define the \textit{$p$-adic Eisenstein-Kronecker series} $E^{(p)}_{k+2, r, \varphi}$ to be the
	$p$-adic modular form in $V(\bbZ_p, \Gamma(N))$ given by
	$$
		E^{(p)}_{k+2, r, \varphi}(g): = \Phi^{(p)}_{k+1, r, P_1(\widehat{g \varphi})} \,  \in  V(\bbZ_p, \Garith(N))
	$$
	on the component for $g \in GL_2(\bbZ/N)$.
\end{definition}

From the definition, we have the following.

\begin{lemma}
	For any integer $k>0$, we have
	$$
		E^{(p)}_{k+2, 0, \varphi} = (1 - \frobphi^*_{\wt\cM}) E_{k+2,0, \varphi},
	$$
	where $\frobphi_{\wt\cM} := \Frob \otimes \sigma$ is the Frobenius on $\wt\cM$ of \eqref{eq: Frob full level}.
\end{lemma}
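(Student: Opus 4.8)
The plan is to reduce everything to the defining formulas together with the $q$-expansion principle. First I would observe that both sides lie in $V(\bbZ_p,\Gamma(N))\otimes\bbQ_p$, so by injectivity of the $q$-expansion map for generalized $p$-adic modular forms it suffices to verify the identity on each component $g\in\GL_2(\bbZ/N)$. On the component for $g$ the left hand side is $\Phi^{(p)}_{k+1,0,P_1(\wh{g\varphi})}$ by the definition of $E^{(p)}_{k+2,0,\varphi}$, and the $r=0$ case of Proposition \ref{pro: mu k} rewrites this as
\[
	\Phi^{(p)}_{k+1,0,P_1(\wh{g\varphi})} = \Phi_{k+1,0,P_1(\wh{g\varphi})(u,v)} - \Frob\bigl(\Phi_{k+1,0,P_1(\wh{g\varphi})(u,pv)}\bigr).
\]
Using $\Phi_{k+1,0,f}=G_{k+2,0,f}$ and Lemma \ref{lem: comparison Katz}, the first term is exactly the component for $g$ of $E_{k+2,0,\varphi}$, so the whole problem is to recognise $\Frob(\Phi_{k+1,0,P_1(\wh{g\varphi})(u,pv)})$ as the component for $g$ of $\frobphi^{*}_{\wt\cM}E_{k+2,0,\varphi}$.

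For this, recall from \eqref{eq: Frob full level} and \eqref{eq: parts} that $\frobphi_{\wt\cM}=\Frob\otimes\sigma$ on $\wt\cM\cong\wt\cM_\arith\otimes_{\bbZ_p}\bbZ_p[\zeta_N]$, where $\Frob$ is Katz's Frobenius on generalized $p$-adic modular forms — the very operator occurring in Proposition \ref{pro: mu k} — and $\sigma$ is the arithmetic Frobenius of $\bbZ_p[\zeta_N]$ with $\sigma(\zeta_N)=\zeta_N^{p}$. The key elementary point, which I would isolate as a short lemma, is that since $p$ is prime to $N$ the Galois twist $\sigma$ acts on the finite-Fourier-transformed function $P_1(\wh{g\varphi})$ precisely by the substitution appearing in $\Phi^{(p)}$:
\[
	\sigma\bigl(P_1(\wh{g\varphi})(m,n)\bigr)=P_1(\wh{g\varphi})(m,pn),
\]
which one reads off directly from \eqref{partialFourier} and \eqref{symplecticFourier} by replacing $\zeta_N$ with $\zeta_N^{p}$ and reindexing the summations by multiplication by $p$. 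Since $f\mapsto G_{k+2,0,f}=\Phi_{k+1,0,f}$ is linear in $f$ with building blocks defined over $\bbQ$, this yields $\Phi_{k+1,0,P_1(\wh{g\varphi})(u,pv)}=(\id\otimes\sigma)\,\Phi_{k+1,0,P_1(\wh{g\varphi})}$, and applying $\Frob=\Frob\otimes\id$ gives $(\Frob\otimes\sigma)\,\Phi_{k+1,0,P_1(\wh{g\varphi})}=\frobphi^{*}_{\wt\cM}E_{k+2,0,\varphi}$ on the component for $g$. Combining this with the first term produces $E^{(p)}_{k+2,0,\varphi}=(1-\frobphi^{*}_{\wt\cM})E_{k+2,0,\varphi}$ on each component, and the $q$-expansion principle (Theorem \ref{q-exp-thm} together with its $p$-adic analogue) upgrades this to the asserted equality of $p$-adic modular forms.

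The step I expect to be the real content — everything else being formal unwinding — is the identification of the function-level substitution $f(u,v)\mapsto f(u,pv)$ built into Katz's measure with the $\sigma$-component of the geometric Frobenius $\frobphi_{\wt\cM}$. Concretely one must keep straight which Frobenius is which: the $\Frob$ appearing in $\Phi^{(p)}$ is the $\Frob\otimes\id$ factor of $\frobphi_{\wt\cM}$, while the $\id\otimes\sigma$ factor is invisible in the measure and surfaces only through the way $\sigma$ permutes the $N$-torsion level structures — equivalently the cusps indexed by $g$ — which is exactly what the Fourier-transform identity above encodes; this is also the place where one must check that passing to the component for $g$ is compatible with $\frobphi_{\wt\cM}$.
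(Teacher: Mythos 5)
Your argument is correct and follows essentially the same route as the paper: working on the component for $g$, identifying the $\sigma$-factor of $\frobphi_{\wt\cM}=\Frob\otimes\sigma$ with the substitution $P_1(\wh{g\varphi})(m,n)\mapsto P_1(\wh{g\varphi})(m,pn)$ via the change of variables in the partial Fourier transform, and then matching the two terms of $\Phi^{(p)}_{k+1,0,P_1(\wh{g\varphi})}$ with $(1-\frobphi^*_{\wt\cM})\Phi_{k+1,0,P_1(\wh{g\varphi})}$. The appeal to the $q$-expansion principle at the end is superfluous (equality on each component is already the asserted equality), but harmless.
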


\begin{proof}
	We have
	$
		\sigma(g \varphi)(m,n) = [p] g \varphi(m,n) =g \varphi( p^{-1} m,n ).
	$
	Hence
	\begin{multline*}
		\sigma(P_1 (g \varphi))(m,n) =
		\frac{1}{N}\sum_{v}  \sigma(g \varphi)(v,n) \exp \left[ \frac{2 \pi i v m}{N} \right] \\
		= 	\frac{1}{N}\sum_{v}  g \varphi(p^{-1} v,n) \exp \left[ \frac{2 \pi i v m}{N} \right]\\
		= 	\frac{1}{N}\sum_{v'} g \varphi(v',n) \exp \left[ \frac{2 \pi i v' (pm)}{N} \right]
		= P_1(g \varphi)(pm, n),
	\end{multline*}
	where we have used the change of variables $v = p v'$ in $\bbZ/N$.
	Since $P_1(\widehat{g\varphi}) = (P_1(g\varphi))^t$, we have
	$$
		\sigma(P_1(\widehat{g\varphi}))(m,n) = P_1(\widehat{g\varphi})(m, pn).
	$$
	This implies that
	\begin{multline*}
		\Phi^{(p)}_{k+1, 0, P_1(\widehat{g\varphi})} := \Phi_{k+1, 0, P_1(\widehat{g\varphi})(u,v)} - 
		\Frob \left( \Phi_{k+1, 0, P_1(\widehat{g\varphi})(u, pv)} \right)  \\
		= (1 - \frobphi^*)  \Phi_{k+1, 0, P_1(\wh{g\varphi})}.
	\end{multline*}
	Our assertion now follows from the fact that
	$E^{(p)}_{k+2, 0, \varphi}(g)  = \Phi^{(p)}_{k+1, 0, P_1(\widehat{g\varphi})}$
	and	$E_{k+2, 0, \varphi}(g) = G_{k+2,0, P_1(\wh{g\varphi})} = \Phi_{k+1, 0, P_1(\wh{g\varphi})}$.
\end{proof}

\begin{lemma}
	Suppose $k$ is an integer $>0$.
	The $p$-adic Eisenstein-Kronecker series $E^{(p)}_{k+2,r,\varphi}$ satisfy the differential equation
	$$
		\left(  q \frac{d}{dq} \right) E^{(p)}_{k+2,r,\varphi} = E^{(p)}_{k+2+1,r+1,\varphi}.
	$$
\end{lemma}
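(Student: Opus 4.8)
The plan is to reduce the statement to an identity of $q$-expansions and invoke the $q$-expansion principle. Throughout, $q\frac{d}{dq}$ is understood as the derivation $N\theta$ on generalized $p$-adic modular forms, which by \eqref{eq: N theta} induces $q\,d/dq$ on $q$-expansions; since $N\theta$ is defined geometrically (via the square of the canonical differential and Kodaira--Spencer), it commutes with the formation of the components $F\mapsto F(g)$, $g\in\GL_2(\bbZ/N)$. Hence it suffices to prove, for each $g$, the identity
\[
	N\theta\big(\Phi^{(p)}_{k+1,\,r,\,P_1(\wh{g\varphi})}\big)=\Phi^{(p)}_{k+2,\,r+1,\,P_1(\wh{g\varphi})}
\]
in $V(\bbZ_p,\GarithN)\otimes\bbQ_p$; the right-hand side is defined, because $\Phi^{(p)}_{k+2,\bullet,\bullet}$ is built from the measure $\mu_N^{k+2}$ --- the measure of Proposition~\ref{pro: mu k} for the integer $k+1>0$ --- which is again supported on $\bbZ_p^\times\times(\bbZ/N)^2$ (the support argument above applies verbatim with $k+1$ for $k$). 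By injectivity of $q_\infty$ (Theorem~\ref{q-exp-thm}), it is enough to verify the identity on $q$-expansions.

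Writing $f=P_1(\wh{g\varphi})$, I would first record the $q$-expansion of $\Phi^{(p)}_{k+1,r,f}$, read off from the moment formula \eqref{eq: moments} as in the proof of the support lemma above: for $r\ge 0$, a main part $\sum_{n>0}q^{n}\sum_{dd'=n}(\cdots)$ whose coefficients are of the shape $d^{k+1}(d')^{r}f(d,d')$ (up to signs and the transpose conventions of \cite{Ka3}), a Frobenius-twisted part $\sum_{n>0}q^{pn}\sum_{dd'=n}(\cdots)$ with coefficients of shape $d^{k+1}(pd')^{r}f(d,pd')$, and --- only when $r=0$ --- an $L$-value constant term; for $r<0$, using that $\mu_N^{k+1}$ is supported on units, one integrates $y^{r}$ against its extension by zero to $p\bbZ_p$, whereupon the Frobenius-twisted part vanishes and the main part runs only over $dd'=n$ with $p\nmid d'$. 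Now applying $q\,d/dq$ multiplies the coefficient of $q^{m}$ by $m$; since $m=n=dd'$ in the main part and $m=pn=d\cdot(pd')$ in the Frobenius-twisted part, this raises the exponent of both factors by one (so $d^{k+1}(d')^{r}\mapsto d^{k+2}(d')^{r+1}$, $d^{k+1}(pd')^{r}\mapsto d^{k+2}(pd')^{r+1}$) and kills the constant term. This is precisely the $q$-expansion of $\Phi^{(p)}_{k+2,r+1,f}$ obtained from Proposition~\ref{pro: mu k} with $k+1$ for $k$ and $r+1$ for $r$ (and the support-on-units description when $r+1\le 0$). Injectivity of $q_\infty$ then yields the identity, hence the lemma.

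The step needing genuine care --- and the main, if modest, obstacle --- is the behavior at $r=0$ and the transition $r=-1\to r=0$. At $r=0$, $q\,d/dq$ annihilates the $L$-value constant term on the left; this is consistent because $\Phi^{(p)}_{k+2,1,f}$ has vanishing constant term, which is exactly the cancellation recorded in the proof of Proposition~\ref{pro: mu k}. At $r=-1\to r=0$, one must know that the negative-weight definition $2\Phi^{(p)}_{k+2,0,f}=\int_{\bbZ_p^\times\times(\bbZ/N)^2}f\,d\mu_N^{k+2}$ agrees with the direct definition $\Phi_{k+2,0,f(u,v)}-\Frob(\Phi_{k+2,0,f(u,pv)})$: this follows from $\mu_N^{k+2}$ being supported on $\bbZ_p^\times\times(\bbZ/N)^2$, together with the fact that the two relevant $L$-value constant terms coincide and cancel. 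The remaining content is routine tracking of the Frobenius-twisted $q^{pn}$-sum through the computation.
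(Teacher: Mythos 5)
Your proposal is correct and follows the same route as the paper: both reduce to the $q$-expansion of $2\Phi^{(p)}_{k+1,r,P_1(\wh{g\varphi})}$ obtained by continuity from the moment formula, observe that $q\,d/dq$ multiplies the coefficient of $q^{n}$ (resp.\ $q^{pn}$) by $n=dd'$ (resp.\ $pn=d\cdot pd'$), thereby shifting $d^{k+1}(d')^{r}$ to $d^{k+2}(d')^{r+1}$ in both parts, and conclude by the injectivity of $q_\infty$. The paper compresses this to ``direct calculation''; your extra care with the constant term at $r=0$ and with the support-on-units description for $r<0$ is consistent with, and fills in, what the paper leaves implicit.
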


\begin{proof}
	By continuity, the $q$-expansion of $2 E^{(p)}_{k+2,r,\varphi}(g) = 2 \Phi^{(p)}_{k+1, r, P_1(\wh{g\varphi})}$ is
	{\small\begin{multline*}
		\sum_{n > 0} q^n \sum_{dd'|n} \left( d^{k+1} d'^r P_1(\wh{g\varphi})(d,d') - (-d)^{k+1} (-d')^r P_1(\wh{g\varphi})(-d,-d')  \right) \\
		- \sum_{n > 0} q^{pn} \sum_{dd'|n} \left( d^{k+1} (pd')^r P_1(\wh{g\varphi})(d,pd') - (-d)^{k+1} (-pd')^r 
		P_1(\wh{g\varphi})(-d,-pd') \right).
	\end{multline*}}
	Our assertion follows by direct calculation.
\end{proof}

%
\subsection{The syntomic class}
%

We next determine the section 
$$
	\alpha_\Eis^{k+2}(\varphi) \in \Gamma(\ol\cM_{\bbQ_p},  \Sym^k \sH_\rig)
$$
giving the syntomic Eisenstein class $\Eis^{k+2}_\syn(\varphi)$.  By definition,
$\alpha^{k+2}_\Eis(\varphi)$ is an element satisfying the differential equation
\begin{equation}\label{eq: diff alpha}
	\nabla(\alpha^{k+2}_\Eis(\varphi)) = (1 - \Phi) \Eis^{k+2}_\dR(\varphi).
\end{equation}	
In order to describe $\alpha^{k+2}_\Eis(\varphi)$ explicitly, we consider its image with respect to the natural injection
$$
	\Gamma(\ol\cM_{\bbQ_p}, \Sym^k \sH_\rig):=\Gamma(\ol\cM_{\bbQ_p}, j^\dagger \Sym^k \sH) \hookrightarrow \Gamma(\widetilde{\cM}_{\bbQ_p}, \Sym^k \wt\sH).
$$

\begin{definition}
	We define the element $\wt\alpha^{k+2}_\Eis(\varphi) \in \Gamma(\widetilde{\cM}_{\bbQ_p}, \Sym^k \wt\sH)$ by the formula
	\begin{equation}\label{eq: def alpha 00}
		\wt\alpha^{k+2}_\Eis(\varphi)  = \sum_{n= 0}^{k}  \frac{(-1)^n}{(k-n)!} 
		E^{(p)}_{k+1-n, -1-n,\varphi} \wt\omega^{\vee n} \wt u^{\vee k-n},
	\end{equation}
	where $E^{(p)}_{k+1-n, -1-n, \varphi}$ are the $p$-adic Eisenstein-Kronecker series.
\end{definition}

The connection on $\ol\cM_{\bbQ_p}$ and $\widetilde{\cM}_{\bbQ_p}$ gives the commutative diagram
$$
	\xymatrix{%
			\Gamma(\ol\cM_{\bbQ_p},  \Sym^k \sH_\rig) \ar[r] \ar[d]_\nabla & 
				\Gamma(\widetilde{\cM}_{\bbQ_p}, \Sym^k \wt\sH) \ar[d]_\nabla  \\
			\Gamma(\ol\cM_{\bbQ_p}, \Sym^k \sH_\rig \otimes \Omega^1_{\ol\cM_{\bbQ_p}})\ar[r] & 
				\Gamma(\widetilde{\cM}_{\bbQ_p}, \Sym^k \wt\sH \otimes \Omega^1_{\widetilde{\cM}_{\bbQ_p}}).
	}%
$$
By definition of the $p$-adic Eisenstein-Kronecker series, we have
\begin{multline*}
	\nabla(\widetilde\alpha^{k+2}_\Eis(\varphi)) =  \sum_{n= 0}^{k}  \frac{(-1)^n}{(k-n)!} 
	E^{(p)}_{k+2-n, -n,\varphi} \wt\omega^{\vee n} \wt u^{\vee k-n} \otimes \xi \\
	+  \sum_{n= 1}^{k}  \frac{(-1)^{n+1}}{(k-n)!} E^{(p)}_{k+2-n, -n,\varphi} \wt\omega^{\vee n} \wt u^{\vee k-n} \otimes \xi
	= 	\frac{1}{k!} E^{(p)}_{k+2, 0,\varphi}  \wt u^{\vee k} \otimes \xi.
\end{multline*}	
Therefore, if we identify  $\Eis^{k+2}_\dR(\varphi)$ with its image in 
$\Gamma(\widetilde{\cM}_{\bbQ_p}, \Sym^k \sH\otimes \Omega^1_{\widetilde{\cM}_{\bbQ_p}})$, 
then by definition of $\Eis^{k+2}_\dR(\varphi)$, we have 
$$	
	\nabla(\widetilde\alpha^{k+2}_\Eis(\varphi)) = (1 - \wt\Phi) \Eis^{k+2}_\dR(\varphi),
$$
where $\wt\Phi$ is the Frobenius on $\Sym^k \wt \sH$.
Hence this element satisfies a condition similar to \eqref{eq: diff alpha}.  
We next prove that $\widetilde\alpha^{k+2}_\Eis(\varphi)$ is in fact the image of an element $\alpha^{k+2}_\Eis(\varphi)$ in 
$\Gamma(\ol\cM_{\bbQ_p}, \Sym^k \sH_\rig)$. 

\begin{lemma}
	There exists an element  
	$$
		\alpha^{k+2}_\Eis(\varphi) \in \Gamma(\ol\cM_{\bbQ_p}, \Sym^k \sH_\rig)
	$$ whose image in
	$\Gamma(\widetilde{\cM}_{\bbQ_p}, \Sym^k \wt\sH \otimes \Omega^1_{\widetilde{\cM}_{\bbQ_p}})$
	is $\wt\alpha^{k+2}_\Eis(\varphi)$.
\end{lemma}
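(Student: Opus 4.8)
The plan is to recognize $\wt\alpha^{k+2}_\Eis(\varphi)$ as the pull-back of the section already attached to the syntomic Eisenstein class, and thereby to inherit overconvergence. Recall that $\Eis^{k+2}_\syn(\varphi)\in H^1_\syn(\sM^\ord,\Sym^k\sH(1))$ exists (Definition \ref{def: syntomic Eisenstein}, restricted to $\sM^\ord$), and that by the $(\alpha,\xi)$-description of rigid syntomic cohomology with coefficients from the Appendix (Proposition \ref{prop: fund class}), combined with Proposition \ref{pro: ord to dR}, it is represented by a pair $(\alpha^{k+2}_\Eis(\varphi),\Eis^{k+2}_\dR(\varphi))$ in which $\alpha^{k+2}_\Eis(\varphi)\in\Gamma(\ol\cM_{\bbQ_p},\Sym^k\sH_\rig)$ is an overconvergent section and $\nabla(\alpha^{k+2}_\Eis(\varphi))=(1-\Phi)\Eis^{k+2}_\dR(\varphi)$, i.e.\ equation \eqref{eq: diff alpha} holds. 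I would take this $\alpha^{k+2}_\Eis(\varphi)$ as the desired element and show that its image in $\Gamma(\wt\cM_{\bbQ_p},\Sym^k\wt\sH)$ equals $\wt\alpha^{k+2}_\Eis(\varphi)$.

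Next I would pull $\alpha^{k+2}_\Eis(\varphi)$ back along $\Gamma(\ol\cM_{\bbQ_p},\Sym^k\sH_\rig)\hookrightarrow\Gamma(\wt\cM_{\bbQ_p},\Sym^k\wt\sH)$. Since $\nabla$ is compatible with this inclusion, $\Phi$ restricts to $\wt\Phi$, and $\Eis^{k+2}_\dR(\varphi)$ restricts to its image, the pull-back again satisfies $\nabla(\,\cdot\,)=(1-\wt\Phi)\Eis^{k+2}_\dR(\varphi)$ over $\wt\cM_{\bbQ_p}$; but this is exactly the identity verified for $\wt\alpha^{k+2}_\Eis(\varphi)$ in the telescoping computation immediately preceding the lemma. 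Hence the difference $\delta$ of the two sections is horizontal on $\wt\cM_{\bbQ_p}$. Writing $\delta=\sum_{n=0}^{k}c_n\,\wt\omega^{\vee n}\wt u^{\vee k-n}$ and running the recursion from the proof of Lemma \ref{lem: ord vanish} (using $\nabla(\wt\omega^\vee)=0$, $\nabla(\wt u^\vee)=\wt\omega^\vee$, and that $N\theta$ acts as $q\frac{d}{dq}$ on $q$-expansions) forces $c_0=\dots=c_{k-1}=0$ and $c_k$ constant, so $\delta=c\,\wt\omega^{\vee k}$ for a constant $c$.

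It remains to see that $c=0$, and here I would invoke the $\bbZ_p^\times$-action. The pull-back of $\alpha^{k+2}_\Eis(\varphi)$ is invariant under $[a,1]$ for every $a\in\bbZ_p^\times$, since $\alpha^{k+2}_\Eis(\varphi)$ is defined over $\ol\cM_{\bbQ_p}$. On the other hand $\wt\alpha^{k+2}_\Eis(\varphi)$ is $[a,1]$-invariant termwise: the $p$-adic Eisenstein--Kronecker series $E^{(p)}_{k+1-n,-1-n,\varphi}$ has weight $(k+1-n)+(-1-n)=k-2n$, so $[a,1]$ multiplies it by $a^{k-2n}$, while $[a,1]^*\wt\omega=a^{-1}\wt\omega$ (from the definition of the $\bbZ_p^\times$-action on trivializations) and $\wt u$ spans the weight-$(+1)$ line $\ul\omega^{-1}\subset\wt\sH^\vee$, giving $[a,1]^*\wt\omega^\vee=a\,\wt\omega^\vee$, $[a,1]^*\wt u^\vee=a^{-1}\wt u^\vee$, hence $[a,1]^*(\wt\omega^{\vee n}\wt u^{\vee k-n})=a^{2n-k}\,\wt\omega^{\vee n}\wt u^{\vee k-n}$; the two powers of $a$ cancel in each term. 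Therefore $\delta=c\,\wt\omega^{\vee k}$ is $[a,1]$-invariant, whereas $[a,1]^*(c\,\wt\omega^{\vee k})=c\,a^{k}\wt\omega^{\vee k}$; choosing $a\in 1+p\bbZ_p$ with $a^k\neq 1$ (possible as $k>0$) forces $c=0$. Thus $\alpha^{k+2}_\Eis(\varphi)$ maps to $\wt\alpha^{k+2}_\Eis(\varphi)$, which proves the lemma.

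The only genuinely nontrivial point is the first one: that the primitive of \eqref{eq: diff alpha} supplied by the syntomic formalism is an \emph{overconvergent} section on $M^\ord$ rather than merely a section over the ordinary tube $\cM^\ord_{\bbQ_p}$; this is precisely the property the argument transports to $\wt\alpha^{k+2}_\Eis(\varphi)$, while the $\nabla$-equation for $\wt\alpha^{k+2}_\Eis(\varphi)$, the classification of horizontal sections, and the weight bookkeeping are all routine given §4 and §5. If one preferred not to cite the syntomic class at this stage, one could argue overconvergence directly: the weight computation above already shows $\wt\alpha^{k+2}_\Eis(\varphi)$ descends to a section $\alpha^\circ$ over $\cM^\ord_{\bbQ_p}$, the form $\nabla(\alpha^\circ)=(1-\Phi)\Eis^{k+2}_\dR(\varphi)$ is overconvergent, so its class vanishes in $H^1_\dR(\cM^\ord_{\bbQ_p},\Sym^k\sH)\cong H^1_\rig(\sM^\ord,\Sym^k\sH)$, whence it is already a coboundary in the overconvergent complex, and comparing primitives by the same horizontal-section argument shows $\alpha^\circ$ is itself overconvergent.
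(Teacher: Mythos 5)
Your argument is correct in substance but runs in the opposite direction from the paper's, and is considerably heavier. The paper's entire proof is the weight computation you relegate to a sub-step: one checks that $[a,1]$ acts on $\wt\omega^{\vee n}\wt u^{\vee k-n}$ by $a^{2n-k}$ and on $E^{(p)}_{k+1-n,-1-n,\varphi}$ by $a^{k-2n}$ (citing \cite{Ka3} Lemma 5.11.6), so that $\wt\alpha^{k+2}_\Eis(\varphi)$ is $[a,1]$-invariant and therefore descends. You instead extract an overconvergent primitive from the already-defined syntomic class via Propositions \ref{prop: fund class} and \ref{pro: ord to dR}, and then identify its pullback with $\wt\alpha^{k+2}_\Eis(\varphi)$ by the horizontal-section analysis of Lemma \ref{lem: ord vanish} plus the same weight bookkeeping. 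What your route buys is a genuine repair: descent by $[a,1]$-invariance a priori only produces a section over the ordinary tube $\cM^\ord_{\bbQ_p}$, whereas the lemma asserts membership in $\Gamma(\ol\cM_{\bbQ_p}, j^\dagger \Sym^k\sH)$; the paper's proof is silent on this overconvergence, while you inherit it from the syntomic formalism, where $\alpha\in\Gamma(\ol\cX_K,M_\rig)$ by construction. The costs are twofold. First, you front-load half of the proof of Theorem \ref{thm: main}, whose logic in the paper is the reverse (construct the pair, then identify it with $\Eis^{k+2}_\syn(\varphi)$ by uniqueness); there is no logical circle, but the exposition would need reorganizing. Second, your assertion that $\Eis^{k+2}_\syn(\varphi)$ is represented by a pair whose second component is the \emph{form} $\Eis^{k+2}_\dR(\varphi)$ needs one more word: Corollary \ref{cor: fund class} and Proposition \ref{pro: ord to dR} only give equality of de Rham cohomology classes, and upgrading to equality of closed forms in $\Gamma(\ol M_{\bbQ_p},\ul\omega^{\otimes k}\otimes\Omega^1_{\ol M_{\bbQ_p}}(\log D))$ requires $F^0(\Sym^k\sH(1))=0$ together with $E_1$-degeneration, so that $F^0$-closed forms inject into $H^1_\dR$. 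Finally, your closing alternative is the one step I would not accept as routine: vanishing of the class of $(1-\Phi)\Eis^{k+2}_\dR(\varphi)$ in the cohomology of the tube does not obviously imply vanishing in $H^1_\rig(\sM^\ord,\Sym^k\sH)$, since the restriction map from overconvergent to convergent cohomology need not be injective; but that alternative is not needed for your main argument.
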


\begin{proof}
	It is sufficient to prove that $\widetilde\alpha^{k+2}_\Eis(\varphi)$ descends to $\ol\cM_{\bbQ_p}$.  In order to prove this statement,
	it is sufficient to show that $\widetilde\alpha^{k+2}_{\Eis}(\varphi)$ is invariant under the action of $[a,1]$ for any 
	$a \in \bbZ_p^\times$.
	By definition, $[a,1]$ acts on $\wt\omega$ by $[a,1] \wt\omega = a^{-1} \wt\omega$, and since $\wt u = \wt\omega^{-1}$,
	we have $[a,1] \wt u = a \wt u$.  Hence by duality, we have
	$$
		[a,1] (\wt\omega^{\vee n} \wt u^{\vee k-n}) = a^{2n-k} (\wt\omega^{\vee n} \wt u^{\vee k-n}).
	$$
	By \cite{Ka3} Lemma 5.11.6, we have
	\begin{equation*}
		[a,1] E^{(p)}_{k+1-n,  -1-n,\varphi} = a^{k-2n} E^{(p)}_{k,r,\varphi}.
	\end{equation*}
	Our assertion now follows from the definition of $\widetilde\alpha^{k+2}_\Eis(\varphi)$.
\end{proof}

We may now use $\alpha^{k+2}_\Eis(\varphi)$ to explicitly describe the syntomic Eisenstein class.
Our result shows that the syntomic Eisenstein class is related to $p$-adic Eisenstein-Kronecker
series, much in the same way as the Eisenstein class in absolute Hodge cohomology is related to
real analytic Eisenstein-Kronecker series.

\begin{theorem}\label{thm: main}
	The syntomic Eisenstein class 
	$
		\Eis_\syn^{k+2}(\varphi) 
	$
	restricted to the ordinary locus is expressed as
	$$
		\Eis_\syn^{k+2}(\varphi) = (\alpha^{k+2}_{\Eis}(\varphi), \Eis_\dR^{k+2}(\varphi)),
	$$
	where $\Eis_\dR^{k+2}(\varphi)$ is the section of $\Gamma(\ol M_K, \Sym^k \sH \otimes \Omega^1_{\ol M_K}(\log \Cusp))$
	defined in \eqref{eq: de Rham Eisenstein} giving the de Rham Eisenstein class, and $\alpha^{k+2}_{\Eis}(\varphi)$
	is the section defined in the previous lemma which is the unique section mapping to
	$$
		\wt\alpha^{k+2}_\Eis(\varphi)  = \sum_{n= 0}^{k}  \frac{(-1)^n}{(k-n)!} 
		E^{(p)}_{k+1-n, -1-n,\varphi} \wt\omega^{\vee n} \wt u^{\vee k-n}
	$$
	on $\wt \cM_{\bbQ_p}$.
\end{theorem}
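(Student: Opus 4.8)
The plan is to recognize the pair $(\alpha^{k+2}_{\Eis}(\varphi),\Eis^{k+2}_\dR(\varphi))$ as a representative of a class in $H^1_\syn(\sM^\ord,\Sym^k\sH(1))$ in the sense of Proposition \ref{prop: fund class}, and then to pin that class down by the uniqueness statement of Proposition \ref{pro: ord to dR}. Three inputs are already available: the section $\alpha^{k+2}_{\Eis}(\varphi)\in\Gamma(\ol\cM_{\bbQ_p},\Sym^k\sH_\rig)$ constructed in the lemma just above, which lifts $\wt\alpha^{k+2}_\Eis(\varphi)$ uniquely; the identity $\nabla(\wt\alpha^{k+2}_\Eis(\varphi))=(1-\wt\Phi)\Eis^{k+2}_\dR(\varphi)$ on $\wt\cM_{\bbQ_p}$, obtained from the $q$-expansions of the $p$-adic Eisenstein--Kronecker series together with the explicit formulas for $\nabla$ and $\Phi$ on the basis $\{\wt\omega,\wt u\}$; and Proposition \ref{pro: ord to dR}, which states that $\Eis^{k+2}_\syn(\varphi)$ is the \emph{unique} element of $H^1_\syn(\sM^\ord,\Sym^k\sH(1))$ whose image under the boundary map to $H^1_\dR(\sM^\ord,\Sym^k\sH)$ is $\Eis^{k+2}_\dR(\varphi)$.

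First I would verify that the pair is admissible for the description of Proposition \ref{prop: fund class}. Since $F^0(\Sym^k\sH(1))=0$, that proposition presents $H^1_\syn(\sM^\ord,\Sym^k\sH(1))$ by pairs $(\alpha,\xi)$, with $\alpha$ an overconvergent section of $\Sym^k\sH_\rig$ on $M^\ord$, with $\xi$ an algebraic section of $\Sym^k\sH\otimes\Omega^1_{\ol M_K}(\log\Cusp)$ representing the corresponding de Rham class, subject to $\nabla(\alpha)=(1-\Phi)\xi$. Here $\xi=\Eis^{k+2}_\dR(\varphi)$ is precisely the log-differential form \eqref{eq: de Rham Eisenstein}, which is algebraic and represents the de Rham Eisenstein class by Lemma \ref{lem: syn to dR} and the identification of de Rham with rigid cohomology; and $\alpha=\alpha^{k+2}_\Eis(\varphi)$ is overconvergent by construction. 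For the differential equation \eqref{eq: diff alpha} it suffices to pull back to $\wt\cM_{\bbQ_p}$: the restriction map $\Gamma(\ol\cM_{\bbQ_p},\Sym^k\sH_\rig\otimes\Omega^1_{\ol\cM_{\bbQ_p}})\hookrightarrow\Gamma(\wt\cM_{\bbQ_p},\Sym^k\wt\sH\otimes\Omega^1_{\wt\cM_{\bbQ_p}})$ is injective, and on $\wt\cM_{\bbQ_p}$ the identity $\nabla(\wt\alpha^{k+2}_\Eis(\varphi))=(1-\wt\Phi)\Eis^{k+2}_\dR(\varphi)$ holds, where $\wt\Phi$ is the pullback of $\Phi$ along $\phi_{\wt\cM}$, the lift of $\phi_{\cM^\ord}$ of \eqref{eq: phi first}. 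Hence \eqref{eq: diff alpha} holds on $\ol\cM_{\bbQ_p}$ and the pair $(\alpha^{k+2}_\Eis(\varphi),\Eis^{k+2}_\dR(\varphi))$ defines a class in $H^1_\syn(\sM^\ord,\Sym^k\sH(1))$.

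It then remains to identify this class with $\Eis^{k+2}_\syn(\varphi)$. By the description of the boundary map in Proposition \ref{prop: fund class}, a pair $(\alpha,\xi)$ is sent under $H^1_\syn(\sM^\ord,\Sym^k\sH(1))\to H^1_\dR(\sM^\ord,\Sym^k\sH)$ to the de Rham class of $\xi$; applied to our pair this gives the class of $\Eis^{k+2}_\dR(\varphi)$, i.e. $\Eis^{k+2}_\dR(\varphi)$ itself. By Proposition \ref{pro: ord to dR} this boundary map is injective and $\Eis^{k+2}_\syn(\varphi)$ is its unique preimage of $\Eis^{k+2}_\dR(\varphi)$, so $\Eis^{k+2}_\syn(\varphi)=(\alpha^{k+2}_\Eis(\varphi),\Eis^{k+2}_\dR(\varphi))$, which is the claim; the uniqueness of $\alpha^{k+2}_\Eis(\varphi)$ among sections over $\ol\cM_{\bbQ_p}$ restricting to $\wt\alpha^{k+2}_\Eis(\varphi)$ on $\wt\cM_{\bbQ_p}$ is immediate from the injectivity of that restriction.

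The conceptual content --- reformulating \eqref{eq: diff alpha} on Katz's measure space, where $\nabla$ becomes a character twist, and solving it by reading off the moments $E^{(p)}_{k+1-n,-1-n,\varphi}$ --- has already been carried out in the definition of $\wt\alpha^{k+2}_\Eis(\varphi)$ via \eqref{eq: def alpha 00} and in the descent lemma. So within the proof of Theorem \ref{thm: main} itself the remaining points are purely a matter of compatibility: one must make sure that the pair formalism of the appendix is applied to $\sM^\ord$ with exactly the overconvergent Frobenius $\phi_{\cM^\ord}$ whose lift $\phi_{\wt\cM}$ produces the operator $1-\wt\Phi$ appearing on $\wt\cM_{\bbQ_p}$, and that $\Eis^{k+2}_\dR(\varphi)$ of \eqref{eq: de Rham Eisenstein} is genuinely the representative of the de Rham Eisenstein class in the rigid-syntomic setting. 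Matching these Frobenius structures across the covering $\wt\cM_{\bbQ_p}\to\cM^\ord_{\bbQ_p}$ is the point I expect to require the most care.
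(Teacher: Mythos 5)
Your proposal is correct and follows essentially the same route as the paper: verify that the pair $(\alpha^{k+2}_\Eis(\varphi),\Eis^{k+2}_\dR(\varphi))$ satisfies the conditions of Proposition \ref{prop: fund class}, use Corollary \ref{cor: fund class} to see that its boundary image is the de Rham Eisenstein class, and conclude by the uniqueness of Proposition \ref{pro: ord to dR}. The only point you leave tacit is the condition $\nabla(\Eis^{k+2}_\dR(\varphi))=0$ required by Proposition \ref{prop: fund class}, which the paper dispatches by noting that $\ol M_K$ is a curve.
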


\begin{proof}
	By construction of $\alpha^{k+2}_\Eis(\varphi)$, we have
	$$
		\nabla(\alpha^{k+2}_\Eis(\varphi)) = (1 - \Phi) \Eis_\dR^{k+2}(\varphi).
	$$
	Furthermore, since $\ol M_K$ is a curve, we have $\nabla(\Eis_\dR^{k+2}(\varphi)) = 0$. 
	Hence by Proposition \ref{prop: fund class}, the pair $(\alpha^{k+2}_{\Eis}(\varphi), \Eis_\dR^{k+2}(\varphi))$
	defines an element in $H^1_\syn(\sM^\ord, \Sym^k \sH)$.  By Corollary \ref{cor: fund class}, we see that
	this class maps to the de Rham Eisenstein class through the boundary morphism.  Hence the characterization
	in Proposition \ref{pro: ord to dR} of the syntomic Eisenstein class on the ordinary locus shows that
	$$
		\Eis_\syn^{k+2}(\varphi) = (\alpha^{k+2}_{\Eis}(\varphi), \Eis_\dR^{k+2}(\varphi))
	$$
	as desired.
\end{proof}

\appendix

%
%
%
\section{Rigid syntomic cohomology}       
%
%
%

In this section, we review the basic facts concerning rigid syntomic cohomology.
Let $K$ be a finite unramified extension of $\bbQ_{p}$ with ring of integers $\cO_{K}$ and 
residue field $k$. We denote by $\sigma$ the lifting  of the absolute Frobenius of $k$ to $\cO_{K}$ and $K$.

%
\subsection{Filtered overconvergent $F$-isocrystal}		
%

Here, we define the notion of \textit{filtered overconvergent $F$-isocrystals}, which are the smooth coefficients for rigid syntomic cohomology.  This is what is referred to as \textit{syntomic coefficients} in \cite{Ba1} Definition 1.1, but extend to deal with the case 
without a global Frobenius.

\begin{definition}
	We say that a pair of schemes $\sX = (X, \ol X)$ is a \textit{smooth pair}, if $X$ is a smooth scheme of finite type over 
	$S := \Spec\, \cO_{K}$, and $\ol X$ is a smooth compactification such that the complement  $D:= \ol X \setminus X$ is 
	a simple normal crossing divisor relative to $S$.  
\end{definition}

In what follows, we fix a smooth pair $\sX = (X, \ol X)$.
Let $X_{k}: = X \otimes k$ and $\ol X_k := \ol X \otimes k$.  
We denote by $\Isocda(X_{k}/K)$ the category of overconvergent isocrystals on 
$X_{k}$ (\cite{Ber2} Definition 2.3.6).    The realization, in the sense of \cite{Ber2} p.68, of the category $\Isocda(X_{k}/K)$ 
may be given as follows.  Let $\cX$ and $\ol\cX$ be the formal completion of $X$ and $\ol X$ with respect to the special fiber, 
and let $\cX_{K}$ and $\ol\cX_K$ be the associated rigid analytic space.  Note that these rigid analytic spaces are 
the tubular neighborhoods
\begin{align*}
	] X_k [_{\ol\cX} &= \cX_K, &   ]\ol X_k[_{\ol\cX} &= \ol\cX_K.
\end{align*}

For any strict neighborhood $U$ of $j \colon\cX_{K} \hookrightarrow \ol\cX_K$, we let
$j^{\dagger}$ be the functor defined in \cite{Ber2} (2.1.1.3).  This functor associates to a coherent $\cO_{U}$-module $M$ the
coherent $j^{\dagger} \cO_{\ol\cX_K}$-module $j^{\dagger} M$.
The category $\Isocda(X_{k}/K)$ may be realized as the category whose objects consists of 
the pair $(M_{\rig}, \nabla_\rig)$, where $M_{\rig}$ is a coherent 
$j^{\dagger} \cO_{X_{K}^{\an}}$-modules on $\ol\cX_K$ with integrable connection
$$
	\nabla_\rig \colon M_{\rig} \rightarrow M_{\rig} \otimes \Omega^{1}_{\ol\cX_K}
$$
which is \textit{overconvergent}, in the sense of loc. cit. Definition 2.2.5.
We denote by $F^{*}_{\sigma}$ the functor defined in \cite{Ber2} 2.3.7
$$
	F^{*}_{\sigma} \colon\Isocda(X_{k}/K) \rightarrow \Isocda(X_{k}/K)
$$
obtained as the composition of the base extension $\sigma: K \rightarrow K$ with
the absolute Frobenius $F_k: X_k \rightarrow X_k$ of the special fiber.  	
A \textit{Frobenius structure} on an overconvergent isocrystal $\cM_\rig$ on $X_k$ is 
an isomorphism
$$
	\Phi: F_{\sigma}^{*} \cM_{\rig} \xrightarrow{\cong} \cM_{\rig}
$$
in $\Isocda(X_k/K)$.

Next, let $X_{K} := X \otimes K$ and $\ol X_{K} : = \ol X \otimes K$.  
Consider a coherent $M$ module on $\ol X_{K}$ with integrable connection
$$\nabla : M \rightarrow M \otimes \Omega^{1}_{\ol X_{K}}(\log D)$$
on $M$ with logarithmic singularities along $D$.  We may associate to $M$ an overconvergent
isocrystal $\cM_\rig$ on $X_k$ as follows.  Let $X_{K}^{\an}$ be the rigid analytic space associated to $\ol X_{K}$.  
Then $X_{K}^{\an}$ is a strict neighborhood of  $j \colon\cX_{K} \hookrightarrow \ol\cX_K$.  
We let $M_\rig$ be the $j^{\dagger} \cO_{\ol\cX_K}$-module
$$
	M_\rig := j^{\dagger} ( M|_{X_{K}^{\an}})
$$
with an overconvergent connection $\nabla_\rig$ induced from $\nabla$.  
Then $(M_\rig, \nabla_\rig)$ represents an overconvergent isocrystal $\cM_\rig$ in $\Isocda(X_k/K)$.
We now give the definition of the category of filtered 
overconvergent $F$-isocrystals on the smooth pair $\sX$.
\begin{definition}
	We define the category  $S(\sX)$ of \textit{filtered overconvergent $F$-isocrystals} on $\sX$
	to be the category consisting of the 4-uple $$\shM = (M, \nabla, F^{\bullet}, \Phi),$$ where
	\begin{enumerate}
		\item $M$ is a coherent $\cO_{\ol X_K}$-module with an integrable connection
		$$
			\nabla: M \rightarrow M \otimes \Omega^1_{\ol X_K}(\log D)
		$$
		with logarithmic singularities along $D$.
		\item $F^{\bullet}$ is a descending, exhaustive, and separated filtration on $M$ satisfying Griffiths transverality
		$$
			\nabla(F^{\bullet} M) \subset  F^{\bullet-1} M  \otimes \Omega^{1}_{\ol X_{K}}(\log D).
		$$
		\item  Let $\cM_\rig$ be an overconvergent isocrystal represented by $(M_\rig, \nabla_\rig)$.
		Then $\Phi$ is a Frobenius structure on $\cM_\rig$.
	\end{enumerate}
	The morphisms in this category are morphisms of coherent $\cO_{\ol X_K}$-modules compatible
	with the additional structures.
\end{definition}

Next, we define the de Rham and rigid cohomology of filtered overconvergent $F$-isocrystals.  
Let $\sX$ be a smooth pair, and let $\shM = (M, \nabla, F^{\bullet}, \Phi)$ be a filtered 
overconvergent $F$-isocrystal on $\sX$. 
Let
\begin{align*}
	\DR^\bullet_\dR(M)&:= M \otimes \Omega^{\bullet}_{\ol X_{K}}(\log D) \\
	\DR^\bullet_\rig(M_\rig) &:=M_\rig \otimes \Omega^\bullet_{\ol\cX_K}
\end{align*}
where $\Omega^{\bullet}_{\ol X_{K}}(\log D)$ is the de Rham complex on $\ol X_{K}$ with
logarithmic singularities along $D$.  Then $\DR^\bullet_\dR(M)$ has a filtration defined by
\begin{equation}\label{eq: filtration}
	F^{m} \DR^\bullet_\dR(M) := F^{m-q} M \otimes \Omega^{q}_{\ol X_{K}}(\log D).
\end{equation}
We associate to $M$ the de Rham cohomology
\begin{align*}
	H^{i}_{\dR}(\sX, \cM) &:=  R^{i} \Gamma(\ol X_{K}, \DR^\bullet_\dR(M) ),
\end{align*}
which has a Hodge filtration defined by the Hodge to de Rham spectral sequence
\begin{equation}\label{equation: Hodge to de Rham}
	E_{1}^{p,q} = H^{p}\left(\ol X_K, \Gr^{q}_{F}\left( \DR^\bullet_\dR(M) \right)\right) \Rightarrow
	H^{p+q}_{\dR}(\sX, \cM).
\end{equation}

Let $(M_\rig, \nabla_\rig)$ be the overconvergent $F$-isocrystal associated to $(M, \nabla)$. 
The rigid cohomology for $\cM$ is defined as
$$
	H^{i}_{\rig}(X_k, \cM) :=  R^{i} \Gamma(\ol\cX_K, \DR_\rig^\bullet(M_\rig)).
$$
This cohomology has a Frobenius $\Phi$ induced from the Frobenius $\Phi$ on $M_{\rig}$.
As in \cite{Ba1} Definition 1.12, we have a natural homomorphism
\begin{equation}\label{equation: comparison}
	H^{i}_{\dR}(\sX, \cM) \rightarrow H^{i}_{\rig}(X_k, \cM).
\end{equation}

\begin{definition}
	Let $\shM$ be a filtered overconvergent $F$-isocrystal on $\sX$.  We say that $\shM$ is 
	\textit{admissible}, if it satisfies the following conditions.
	\begin{enumerate}
		\item The spectral sequence \eqref{equation: Hodge to de Rham} degenerates at $E_{1}$.
		\item The morphism \eqref{equation: comparison} is an isomorphism of $K$-vector spaces.
		\item The $K$-vector space \eqref{equation: comparison} with the Hodge filtration on de Rham cohomology
		and Frobenius on rigid cohomology is \textit{weakly admissible} in the sense of Fontaine.
	\end{enumerate}
\end{definition}

\begin{remark}
	The above definition of admissibility is ad hoc.  There should be a definition of 
	admissibility for filtered overconvergent $F$-isocrystals which would imply the above.
\end{remark}


\begin{definition}
	Suppose $\shM = (M, \nabla, F, \Phi)$ is an admissible filtered overconvergent $F$-isocrystal on 
	the smooth pair $\sX = (X, \ol X)$.
	We define the rigid cohomology $H^i_\rig(\sX, \shM)$ of 
	the smooth pair $\sX$ with coefficients in $\shM$ to be the $K$-vector space
	$$
		H^i_\rig(\sX, \shM) := H^i_\rig(X_k, \cM),
	$$
	with a natural Frobenius $\Phi$ induced from the Frobenius $\Phi$ on $M_\rig$
	and a Hodge filtration $F^\bullet$ induced from the Hodge filtration
	of $H^i_\dR(\sX, \cM)$ through the isomorphism \eqref{equation: comparison}.
\end{definition}

%
\subsection{Higher direct images}				
%

Next, we define the higher direct image of filtered overconvergent $F$-isocrystals for proper and smooth morphisms
between smooth pairs.
Let $\sX = (X, \ol X)$ and $\sY = (Y, \ol Y)$ be smooth pairs.  A morphism $u \colon\sX \rightarrow \sY$ between 
smooth pairs is a map $u \colon \ol X \rightarrow \ol Y$ such that $u(X) \subset Y$.

\begin{definition}
	We say that a map of smooth pairs $u \colon \sX \rightarrow \sY$ is \textit{proper}, if $u|_X$ is proper.
	We say that $u$ is \textit{smooth}, if $u|_X$ is smooth.
\end{definition}

In this subsection, we define the higher direct images of filtered overconvergent $F$-isocrystals for a
proper and smooth map $u \colon\sX \rightarrow \sY$.  In what follows, we assume that 
$u$ is proper and smooth.
Let $D = \ol X \setminus X$ and $D' = \ol Y \setminus Y$.  
We define the sheaf of relative logarithmic differential 
$\Omega^1_{\ol X/\ol Y, \log}$ as the cokernel
$$
	0 \rightarrow u^* \Omega^1_{\ol Y}(\log D') \rightarrow
	\Omega^1_{\ol X}(\log D) \rightarrow \Omega^1_{\ol X/\ol Y,\log}
	\rightarrow 0,
$$
and let 
$	
	\Omega^q_{\ol X/\ol Y,\log} = \wedge^q \Omega^1_{\ol X/\ol Y, \log}.
$
Suppose $M$ is a coherent $\cO_{\ol X_K}$-module with logarithmic connection
$$
	\nabla: M \rightarrow M \otimes_{\cO_{\ol X}} \Omega^1_{\ol X}(\log D).
$$
We define the relative de Rham complex 
\begin{equation*}
	\DR^\bullet_{X/Y}(M) := M \otimes_{\cO_{\ol X}} \Omega^\bullet_{\ol X/\ol Y, \log}.
\end{equation*}
Then the direct image for de Rham cohomology of $(M, \nabla)$ is defined to be the coherent $\cO_{\ol Y_K}$-module
$$
	R^q u_{\dR*} M := R^q u_* \DR^\bullet_{X/Y}(M),
$$
which has an integrable logarithmic connection, called the Gauss-Manin connection, defined as in \cite{Ka2} as follows.

We define a filtration on the de Rham complex $\DR^\bullet(M)$ by
$$
	\Fil^q \DR^\bullet(M) := \text{Image} \left[
	\DR^{\bullet -q}(M) \otimes_{\cO_{\ol X}} u^* \Omega^q_{\ol Y}(\log D')
	\rightarrow \DR^\bullet(M)
	\right].
$$
Then we have
$$
	\gr_\Fil^q\DR^\bullet(M) = \DR^{\bullet-q}_{X/Y}(M) \otimes_{\cO_{\ol X}} u^{*} 
	\Omega^q_{\ol Y}(\log D').
$$
Then this filtration gives the spectral sequence for filtrations
$$
	E_1^{qr} =  R^{q+r} u_* \gr_\Fil^q\DR^\bullet(M) 
	\Rightarrow  R^{q+r} u_* \DR^\bullet(M),
$$
where the $E_1$-term may be written as
\begin{align*}
	E_1^{qr} &=  R^{q+r} u_* \left( \DR^{\bullet-q}_{X/Y}(M) \otimes_{\cO_{\ol X}} u^{*} 
	\Omega^q_{\ol Y}(\log D') \right)\\
	&=  R^r u_{\dR *} M  \otimes_{\cO_{\ol X}} u^{*} 
	\Omega^q_{\ol Y}(\log D').
\end{align*}
The Gauss-Manin connection
$$
	\nabla \colon R^r u_{\dR*} M \rightarrow  R^r u_{\dR*} M \otimes \Omega^1_{\ol Y}(\log D')
$$ 
is defined as the connecting morphism
$
	d_{1}^{0r} : E_1^{0r} \rightarrow E_{1}^{1r}
$
of the above spectral sequence.

The higher direct image for rigid cohomology may be define using a similar method.  Note that since $u|_X$ is
smooth, the map $u \colon\ol\cX_K \rightarrow \ol\cY_K$ is smooth in a neighborhood of $X_k$.  Let
$j^\dagger \Omega^1_{\ol \cX_K/\ol \cY_K}$ be the relative de Rham differential on $\ol\cX_K$.  Consider 
an overconvergent isocrystal $\cM_\rig$ on $X_k$ realized as $(M_\rig, \nabla_\rig)$.
Then the relative de Rham complex associated to this realization is
$$
	\DR^\bullet_{X/Y}(M_\rig) := M_\rig \otimes_{j^\dagger \cO_{\ol\cX_K}} j^\dagger \Omega^1_{\ol\cX_K/\ol\cY_K}.
$$
Then the higher direct image for rigid cohomology is defined to be the module
$$
	R^q u_{\rig*}\cM_\rig := R^q u_* \DR^\bullet_{X/Y}(M_\rig),
$$
which by \cite{Ber1} Theorem 5 is a coherent $j^\dagger \cO_{\ol\cY_K}$-module with an integrable
overconvergent connection.  See \cite{Ts} \S 3.2 for a detailed construction of this Gauss-Manin connection. 

Suppose $\cM_\rig = (M_\rig, \nabla_\rig)$ has a Frobenius structure $\Phi : F_\sigma^* \cM_\rig \xrightarrow\cong \cM_\rig$
compatible with the connection.   The pull-back by the absolute Frobenius $F_k \colon\ol X_k \rightarrow \ol X_k$ 
induces a base change morphism
$$
	F_\sigma^* \left(  R^q u_{\rig*} M_\rig \right) \rightarrow  R^q u_{\rig*} \left( F_{\sigma}^* M_\rig \right),
$$
which is an horizontal isomorphism by \cite{Ts} Proposition 2.3.1.  Composed
with $\Phi$, we have a Frobenius structure
\begin{equation}\label{equation: define Frobenius}
	\Phi: F_\sigma^* \left(  R^q u_{\rig*} M_\rig \right) \xrightarrow\cong  R^q u_{\rig*} M_\rig.
\end{equation}

Suppose $u \colon\sX \rightarrow \sY$ is a proper smooth morphism of smooth pairs, 
and let $(M, \nabla)$ be a coherent module on $\ol X_K$ with integrable connection
with logarithmic poles along $D$.  Then for $M_\rig := j^\dagger(M|_{X_K^\an})$, 
we have the following.

\begin{proposition}
	There exists a canonical isomorphism
	\begin{equation}\label{equation: RD isom}
		j^\dagger \left(  (R^q u_{\dR*} M)|_{Y_K^\an} \right) \xrightarrow\cong  R^q u_{\rig*} M_\rig.
	\end{equation}
\end{proposition}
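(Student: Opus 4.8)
The plan is to express both sides of \eqref{equation: RD isom} as the $q$-th cohomology of a pushforward of a relative de Rham complex, and then to bridge the algebraic and the overconvergent computations using rigid analytic GAGA together with the compatibility of the overconvergence functor $j^\dagger$ with proper pushforward.

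I would first record the geometric setup and build the comparison morphism. Since $\ol X$ and $\ol Y$ are smooth compactifications, they are proper over $\Spec\,\cO_K$, so $u\colon\ol X_K\to\ol Y_K$ is proper, one has $X_K^{\an}=\ol\cX_K$ and $Y_K^{\an}=\ol\cY_K$, and the analytified morphism $u^{\an}\colon\ol\cX_K\to\ol\cY_K$ is proper. Analytification carries the bounded complex of coherent sheaves $\DR^\bullet_{X/Y}(M)=M\otimes_{\cO_{\ol X}}\Omega^\bullet_{\ol X/\ol Y,\log}$ to the complex $M^{\an}\otimes\Omega^\bullet_{\ol\cX_K/\ol\cY_K,\log}$ on $\ol\cX_K$, and composing the canonical map to its $j^\dagger$-localization with $R^q(u^{\an})_*$ produces, once GAGA is invoked (below), a canonical morphism from $j^\dagger\big((R^q u_{\dR*}M)|_{Y_K^{\an}}\big)$ to $R^q u_{\rig*}M_\rig$. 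Everything in sight is functorial in $(M,\nabla)$, so it remains only to prove this morphism is an isomorphism, and I would do that by chaining three facts.

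\emph{(a) Rigid analytic GAGA.} For the proper morphism $u\colon\ol X_K\to\ol Y_K$ and the bounded complex of coherent sheaves $\DR^\bullet_{X/Y}(M)$, analytification commutes with higher direct images, so $\big(R^q u_{\dR*}M\big)^{\an}\cong R^q(u^{\an})_*\big(M^{\an}\otimes\Omega^\bullet_{\ol\cX_K/\ol\cY_K,\log}\big)$, hence $j^\dagger\big((R^q u_{\dR*}M)|_{Y_K^{\an}}\big)\cong j^\dagger R^q(u^{\an})_*\big(M^{\an}\otimes\Omega^\bullet_{\ol\cX_K/\ol\cY_K,\log}\big)$. \emph{(b) $j^\dagger$ commutes with proper pushforward.} Since $u^{\an}$ is proper, preimages of strict neighborhoods of $]Y_k[$ are cofinal among strict neighborhoods of $]X_k[$, and Kiehl-type finiteness for coherent cohomology of rigid spaces lets one commute $R^q(u^{\an})_*$ past the filtered colimit defining $j^\dagger$; this is the same mechanism that underlies the coherence of $R^q u_{\rig*}$ (cf. \cite{Ber1} Theorem 5 and \cite{Ts} \S3.2). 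Thus $j^\dagger R^q(u^{\an})_*\big(M^{\an}\otimes\Omega^\bullet_{\ol\cX_K/\ol\cY_K,\log}\big)\cong R^q(u^{\an})_*\,j^\dagger\big(M^{\an}\otimes\Omega^\bullet_{\ol\cX_K/\ol\cY_K,\log}\big)$. \emph{(c) Log relative forms localize to overconvergent relative forms.} There is a cofinal system of strict neighborhoods $V$ of $]X_k[$ in $\ol\cX_K$ on which the local equations of $D^{\an}$ are invertible and on which $u^{\an}$ is smooth (the latter because $u|_X$ is smooth, as already noted in the text); on each such $V$ one has $\Omega^\bullet_{\ol\cX_K/\ol\cY_K,\log}|_V=\Omega^\bullet_{V/\ol\cY_K}$. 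Passing to the colimit over $V$ and using $M_\rig=j^\dagger(M|_{X_K^{\an}})$ identifies $j^\dagger\big(M^{\an}\otimes\Omega^\bullet_{\ol\cX_K/\ol\cY_K,\log}\big)$ with $M_\rig\otimes_{j^\dagger\cO_{\ol\cX_K}}j^\dagger\Omega^\bullet_{\ol\cX_K/\ol\cY_K}=\DR^\bullet_{X/Y}(M_\rig)$, whence $R^q(u^{\an})_*\,j^\dagger\big(M^{\an}\otimes\Omega^\bullet_{\ol\cX_K/\ol\cY_K,\log}\big)\cong R^q u_{\rig*}M_\rig$.

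Composing (a), (b) and (c) identifies the left side of \eqref{equation: RD isom} with $R^q u_{\rig*}M_\rig$ through canonical maps, so the resulting isomorphism is canonical. The main obstacle is step (b): whereas (a) is standard rigid GAGA and (c) is an essentially local unwinding of the definitions of $j^\dagger$ and of the relative log de Rham complex, the interchange of $j^\dagger$ with $R(u^{\an})_*$ for a proper morphism is the genuinely non-formal ingredient, resting on the good behaviour of strict neighborhoods under proper morphisms and on finiteness of coherent cohomology of rigid spaces. A secondary point needing care is bookkeeping consistency of the relative log de Rham complexes appearing in (a) and (c) — log poles along $D$ relative to log poles along $D'$ — and checking that the family $V$ in (c) can be arranged so that $\Omega^\bullet_{V/\ol\cY_K}$ is literally the restriction of $\Omega^\bullet_{\ol\cX_K/\ol\cY_K,\log}$.
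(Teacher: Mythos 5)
Your argument is correct and follows essentially the same route as the paper: rigid-analytic GAGA for the proper map together with the interchange of $j^\dagger$ with $R^q u_*$, which the paper justifies not by finiteness but by quasi-compactness and quasi-separation of $u\colon\ol\cX_K\to\ol\cY_K$, so that cohomology commutes with the direct limit defining $j^\dagger$ (citing \cite{Ts} \S 4.1.1). The only difference is order of operations: the paper first restricts to the open locus $X_K\to Y_K$ by flat base change and applies GAGA there using properness of $u|_X$, so it never needs your step (c) trading logarithmic for overconvergent relative differentials on a cofinal system of strict neighborhoods.
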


\begin{proof}
	Consider the commutative diagram
	$$
		\xymatrix{
			X_K^\an \ar[r] \ar[d]_{u^\an_K} & X_K \ar[d]_{u_K} \ar[r]&  \ar[d]^{u_K}\ol X_K \\
			Y_K^\an \ar[r] & Y_K \ar[r]& \ol Y_K.
		}
	$$
	The flat base change for the second square gives an isomorphism
	$$
		 (R^q u_{K*} M)|_{Y_K} \cong  R^q u_{K*}(M|_{X_K}).
	$$
	Combined with the base change for the first square, we have a homomorphism
	\begin{equation}
		 (R^q u_{K*}M)|_{Y^\an_K} \rightarrow   R^q u^\an_{K*}(M|_{X_K^\an})
	\end{equation}
	which is an isomorphism by GAGA from the assumption that $u|_{X}$ is proper.
	Since the map $u \colon\ol\cX_K \rightarrow \ol\cY_K$ is quasi-compact and quasi-separated, 
	cohomological functors and direct limits commute (See \cite{Ts} \S 4.1.1 for details).  
	Hence we have an isomorphism
	\begin{equation}
		j^\dagger  R^q u^\an_{K*}(M|_{X_K^\an}) \cong  R^q u^\an_{K*}(M_\rig).
	\end{equation}
	Our assertion now follows by composing the above isomorphisms.
\end{proof}

\begin{definition}\label{definition: HDI}
	Let $u\colon\sX \rightarrow \sY$ be a proper smooth morphism of smooth pairs, and let $\cM := (M, \nabla, F^\bullet, \Phi)$
	be a filtered overconvergent $F$-isocrystal on $\sX$. We define the higher direct image $ R^q u_* \cM$ by
	$$
		 R^q u_* \cM := ( R^q u_{\dR*} M, \nabla, F^\bullet, \Phi),
	$$
	where $\nabla$ is the Gauss-Manin connection, $F^\bullet$ is the filtration on 
	$ R^q u_{\dR*}(M)$ induced from the Hodge filtration on $M$ and $\Phi$ is the Frobenius 
	induced through \eqref{equation: RD isom} from the Frobenius \eqref{equation: define Frobenius} 
	on $ R^q u_{\rig*} M_\rig$.
\end{definition}

%
\subsection{Rigid syntomic cohomology}			
%

In this section, we well recall the theory of rigid syntomic cohomology with coefficients of \cite{Ba1}, 
with slight modification to allow for the case without a global Frobenius.  
We first define the notion of an overconvergent Frobenius for a smooth pair.

\begin{definition}\label{def: overconvergent Frobenius}
	Let $\sX = (X, \ol X)$ be a smooth pair.
	Then an overconvergent Frobenius $\frobphi_X = (\phi, \frobphi_V)$ on $\sX$ is a pair of morphisms such that
	$\phi:  \cX \rightarrow \cX$ is a morphism of $\sV$-formal schemes lifting
	the absolute Frobenius $F_k$ of $X_k$, and 
	$
		\frobphi_V \colon V \rightarrow \ol \cX_K
	$
	is a morphism of rigid analytic spaces on some strict  neighborhood $V$ of  $\cX_K$ in $\ol \cX_K$
	which extends $\frobphi_K := \phi \otimes K$.  In other words, we have a commutative diagram
	$$
		\xymatrix{
			X_k \ar[r] \ar[d]^{F_k} & \cX \ar[d]^\phi & \ar[l]  \cX_K \ar[d]^{\frobphi_K} \ar@{^{(}->}[r]<-0.5ex> & V 
			\ar[dl]^{\frobphi_V} \ar@{^{(}->}[r]<-0.5ex> & \ol\cX_K \\
			X_k \ar[r] & \ol\cX &  \ar[l] \ol\cX_K. &
		}
	$$
\end{definition}

\begin{remark}\label{rem: Frobenius lifting}
	\begin{enumerate}
		\item 
		In our previous paper \cite{Ba1}, we assumed the existence of a global Frobenius
		$\frobphi_X : \ol\cX \rightarrow \ol\cX$ on the entire formal scheme $\ol\cX$.
		This would naturally give rise to $\frobphi_X$ in our sense.  This weak form is necessary to consider the theory
		when $X$ is a modular curve.
		\item
		If $X$ is an affine smooth scheme $X = \Spec\,A$, then by a theorem 
		of van der Put \cite{vdP} (2.4), there exists
		a Frobenius $\phi: A^{\dagger} \rightarrow A^{\dagger}$ on the weak completion of $A$ lifting the absolute 
		Frobenius of the special fiber.  This combined with \cite{Ber1} (2.5.3) shows that an overconvergent Frobenius
		exists in this case.
	\end{enumerate}
\end{remark}

\begin{definition}
	We denote by $\frD_K$ the category of syntomic data on $K$ defined as follows.
	The object in this category is a pair $(\sX, \frobphi_X)$, where
	\begin{enumerate}
		\item $\sX = (X, \ol X)$ is a smooth pair.
		\item $\frobphi_X = (\phi, \frobphi_V)$ is an overconvergent Frobenius on $\sX$.
	\end{enumerate}
	A morphism between syntomic datum $(\sX, \frobphi_X)$, $(\sY, \frobphi_Y)$ in $\frD_K$ is a morphism of smooth pairs
	$
		u \colon \sX \rightarrow \sY
	$
	 compatible with the Frobenius.
\end{definition}

We will often omit the $\frobphi_X$ from the notation and simply write $\sX$ for $(\sX, \frobphi_X)$.
In what follows, we fix a syntomic data $\sX$.
Suppose $\cM_\rig$  is an overconvergent isocrystal in $\Isocda(X_k/K)$ represented by $ (M_\rig, \nabla_\rig)$.
Then by \cite{Ber1} Proposition 2.5.5, the overconvergent isocrystal $F_\sigma^* \cM_\rig$ is
expressed as the pull-back by $\frobphi_V$ of $(M_\rig, \nabla_\rig)$.  Hence a Frobenius structure 
$\Phi$ on $\cM_\rig$ may be explicitly realized as an isomorphism $\Phi$ of $j^\dagger \cO_{\ol\cX_K}$-modules
$$
	\Phi:  \frobphi_V^* M_\rig \xrightarrow\cong M_\rig
$$
on $\ol\cX_K$, horizontal with respect to the connection $\nabla_\rig$.  Using this realization, we may define 
rigid syntomic cohomology of $(\sX, \frobphi_X)$ with coefficients in an admissible overconvergent 
$F$-isocrystal $\cM$ essentially following the method of \cite{Ba1}.

Let $I$ be a finite set, and let $\frU = \{ \ol U_i \}_{i \in I}$ be a covering of $\ol X$ by Zariski open sets. 
We put $\ol U_{i_0 \cdots i_n K} = \cap_{0 \leq j \leq n} \ol U_{i_j K}$.  Next, let $U_i = \ol U_i \cap X$, and
let $\cU_{i K}$ be the rigid analytic space over $K$ associated to the formal completion $\cU_i$ of $U_i$ 
with respect to the special fiber.  For $\cU_{i_0 \cdots i_n K} = \cap_{0 \leq j \leq n} \cU_{i_j K}$, we denote by
$j_{i_0 \cdots i_n}$ the inclusion
$$
	j_{i_0 \cdots i_n} \colon \cU_{i_0 \cdots i_n K} \hookrightarrow \ol\cX_K.
$$
We let $R^\bullet_\dR(\frU, \cM)$ be the simple complex associated to the Cech complex
$$
	\prod_i \Gamma(\ol U_{i K}, \DR_\dR^\bullet(M)) \rightarrow
		\prod_{i_0, i_1} \Gamma(\ol U_{i_0 i_1 K}, \DR_\dR^\bullet(M))
		\rightarrow \cdots
$$
and we let
$R^\bullet_\rig(\frU, \cM)$ be the simple complex associated to 
$$
	\prod_i \Gamma(\ol \cX_K, j_i^\dagger\DR_\rig^\bullet(M_\rig)) \rightarrow
		\prod_{i_0, i_1} \Gamma(\ol \cX_K, j_{i_0 i_1}^\dagger \DR_\rig^\bullet(M_\rig))
		\rightarrow \cdots.
$$
The complex $R^\bullet_\dR(\frU, \cM)$ has a filtration induced from the Hodge filtration, and there are 
canonical homomorphisms
\begin{align*}
	\frobphi_\frU &: K \otimes_{\sigma, K} R^\bullet_\rig(\frU, \cM)\rightarrow R^\bullet_\rig(\frU, \cM), \\
	\theta_\frU& : R_\dR^\bullet(\frU, \cM) \rightarrow R^\bullet_\rig(\frU, \cM)
\end{align*}
where the first morphism is induced from $\Phi$ and the overconvergent Frobenius $\frobphi_X$,
and the second from $\theta$.  We let
$$
	R^\bullet_\syn(\frU, \cM) := \Cone( F^0 R^\bullet_\dR(\frU, \cM) \rightarrow R^\bullet_\rig(\frU, \cM) )[-1],
$$
where the morphism is $(1 - \frobphi_\frU) \circ \theta_\frU$.

\begin{definition}
	We define the rigid syntomic cohomology of $\sX$ with coefficients in $\cM$ by
	$$
		H^m_\syn(\sX, \cM) := \varinjlim_{\frU} H^m( R^\bullet_\syn(\frU, \cM) ),
	$$
	where the limit is taken with respect to coverings $\frU$ ordered by refinements.
\end{definition}

Note that we have an canonical isomorphism
$$
	H^m( R^\bullet_\syn(\frU, \cM) ) \xrightarrow\cong H^m_\syn(\sX, \cM) 
$$
if the covering $\frU$ consists of affine open sets.  

\begin{proposition}
	By definition, we have a long exact sequence
	\begin{multline*}
		\cdots \rightarrow F^0 H^m_{\dR}(\sX, \cM) \xrightarrow{1 - \phi}
		H^m_\rig(X_k, \cM) \rightarrow H^{m+1}_\syn( \sX, \cM) \rightarrow \cdots
	\end{multline*}
	In the special case $\sV = (\Spec\, \,\cO_K, \Spec\, \, \cO_K)$ with Frobenius $\sigma$, then $S(\sV)$ is
	simply the category of filtered Frobenius modules.  For $\cM = (M, 0, F, \Phi)$ in $S(\sV)$, we have 
	\begin{align*}
		H^{0}_\syn( \sV, \cM) &= \Ker \left( F^0 M \xrightarrow{1-\Phi} M \right) \\
		H^1_\syn(\sV, \cM) &= \Coker \left( F^0 M \xrightarrow{1-\Phi} M \right)
	\end{align*}	
	and $H^m_\syn(\sV, \cM) = 0$ if $m \not= 0, 1$.
\end{proposition}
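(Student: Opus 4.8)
The plan is to read off both assertions directly from the mapping-cone construction defining $R^\bullet_\syn(\frU, \cM)$, using the admissibility of $\cM$ only to control the de Rham side. By the remark following the definition of syntomic cohomology it suffices to work with a single covering $\frU$ of $\ol X$ by affine open sets: for such $\frU$ one has $H^m(R^\bullet_\syn(\frU, \cM)) \cong H^m_\syn(\sX, \cM)$, and moreover the \v{C}ech complexes $R^\bullet_\rig(\frU, \cM)$ and $F^0 R^\bullet_\dR(\frU, \cM)$ compute, respectively, $H^m_\rig(X_k, \cM)$ and the hypercohomology $\mathbb{H}^m(\ol X_K, F^0 \DR^\bullet_\dR(M))$, since all sheaves involved are coherent and hence acyclic on affine opens and on their (affine) intersections.

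First I would invoke the general fact that the shifted mapping cone $\Cone(f)[-1]$ of a morphism of complexes $f \colon A^\bullet \to B^\bullet$ sits in a distinguished triangle $\Cone(f)[-1] \to A^\bullet \xrightarrow{f} B^\bullet \to \Cone(f)$, hence yields a long exact cohomology sequence. Taking $A^\bullet = F^0 R^\bullet_\dR(\frU, \cM)$, $B^\bullet = R^\bullet_\rig(\frU, \cM)$ and $f = (1-\frobphi_\frU)\circ\theta_\frU$, so that $\Cone(f)[-1] = R^\bullet_\syn(\frU, \cM)$ by definition, gives
\[
\cdots \to H^m_\syn(\sX, \cM) \to H^m(F^0 R^\bullet_\dR(\frU, \cM)) \to H^m_\rig(X_k, \cM) \to H^{m+1}_\syn(\sX, \cM) \to \cdots .
\]
What remains is to rewrite the second term as $F^0 H^m_\dR(\sX, \cM)$. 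Here admissibility condition (1) enters: the Hodge-to-de Rham spectral sequence \eqref{equation: Hodge to de Rham} degenerates at $E_1$, which is precisely the statement that the Hodge filtration on $H^m_\dR(\sX, \cM)$ is strict, so the canonical map $\mathbb{H}^m(\ol X_K, F^0 \DR^\bullet_\dR(M)) \to H^m_\dR(\sX, \cM)$ is injective with image $F^0 H^m_\dR(\sX, \cM)$. Since $\theta_\frU$ induces the comparison isomorphism of admissibility condition (2), the connecting arrow becomes $1 - \frobphi$ on $F^0 H^m_\dR(\sX, \cM)$, which is the asserted long exact sequence.

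For the special case $\sV = (\Spec\, \cO_K, \Spec\, \cO_K)$ the divisor $D$ is empty and $\Omega^1_{\ol X_K} = 0$, so for $\cM = (M, 0, F, \Phi)$ the complexes $\DR^\bullet_\dR(M)$ and $\DR^\bullet_\rig(M_\rig)$ are concentrated in degree $0$ and both equal $M$ (the rigid realization being the fibre $M$ over the rigid point $\Spm K$, with trivial connection), and $\theta$ is the identity. A one-element affine covering then exhibits $R^\bullet_\syn(\sV, \cM)$ as the shifted cone of $1 - \Phi \colon F^0 M \to M$, i.e. as the two-term complex $\bigl[\, F^0 M \xrightarrow{1 - \Phi} M \,\bigr]$ placed in degrees $0$ and $1$. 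Reading off cohomology gives
\[
H^0_\syn(\sV, \cM) = \Ker\bigl( F^0 M \xrightarrow{1-\Phi} M \bigr), \qquad H^1_\syn(\sV, \cM) = \Coker\bigl( F^0 M \xrightarrow{1-\Phi} M \bigr),
\]
and $H^m_\syn(\sV, \cM) = 0$ for $m \notin \{0, 1\}$; this is also consistent with the long exact sequence, since $H^m_\dR(\sV, \cM) = 0 = H^m_\rig$ for $m \neq 0$.

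The one genuinely nonformal step is the identification $H^m(F^0 R^\bullet_\dR(\frU, \cM)) = F^0 H^m_\dR(\sX, \cM)$, i.e. the strictness of the Hodge filtration; I expect this degeneration input --- exactly what the admissibility hypothesis is tailored to supply --- to be the crux of the argument. Everything else (the long exact sequence of a mapping cone, acyclicity of coherent sheaves on affine opens, exactness of filtered colimits if one prefers the limit over all coverings to a fixed affine one, and the degenerate computation over $\sV$) is formal.
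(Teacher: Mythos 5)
Your proposal is correct and is exactly the argument the paper intends when it says ``by definition'': the long exact sequence of the shifted mapping cone defining $R^\bullet_\syn(\frU,\cM)$, with the identification $H^m(F^0R^\bullet_\dR(\frU,\cM))\cong F^0H^m_\dR(\sX,\cM)$ supplied by the $E_1$-degeneration in admissibility condition (1), and the computation over $\sV$ reducing to the two-term complex $\bigl[F^0M\xrightarrow{1-\Phi}M\bigr]$. You have also correctly isolated the strictness of the Hodge filtration as the only nonformal input.
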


\begin{corollary}
	We have a short exact sequence
	\begin{multline*}
		0 \rightarrow H^1_\syn(\sV, H^m_\rig(\sX, \cM)) \rightarrow H^{m+1}_\syn(\sX, \cM) \\
		\rightarrow H^0_\syn(\sV, H^{m+1}_\rig(\sX, \cM)) \rightarrow 0.
	\end{multline*}
\end{corollary}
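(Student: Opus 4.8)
The plan is to deduce the short exact sequence formally from the long exact sequence of the preceding Proposition, after reinterpreting its terms via the computation of $H^i_\syn(\sV,-)$ on filtered Frobenius modules.

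First I would record that for every $m$ the rigid cohomology $H^m_\rig(\sX,\cM) = H^m_\rig(X_k,\cM)$ is naturally an object of $S(\sV)$: it is a finite-dimensional $K$-vector space equipped with the Frobenius $\Phi$ induced from the Frobenius on $M_\rig$ and with the Hodge filtration $F^\bullet$ transported from $H^m_\dR(\sX,\cM)$ through the comparison isomorphism \eqref{equation: comparison}, and since $\sV = (\Spec\,\cO_K,\Spec\,\cO_K)$ carries no differentials the connection is zero. Applying the formulas of the Proposition to this object gives
\begin{align*}
	H^0_\syn(\sV, H^m_\rig(\sX,\cM)) &= \Ker\left( F^0 H^m_\rig(\sX,\cM) \xrightarrow{1-\Phi} H^m_\rig(\sX,\cM)\right),\\
	H^1_\syn(\sV, H^m_\rig(\sX,\cM)) &= \Coker\left( F^0 H^m_\rig(\sX,\cM) \xrightarrow{1-\Phi} H^m_\rig(\sX,\cM)\right).
\end{align*}
Under \eqref{equation: comparison} the subspace $F^0 H^m_\rig(\sX,\cM)$ is identified with $F^0 H^m_\dR(\sX,\cM)$ and the map $1-\Phi$ with the map $1-\phi$ occurring in the long exact sequence, so these kernels and cokernels are precisely those of the maps $F^0 H^m_\dR(\sX,\cM)\xrightarrow{1-\phi} H^m_\rig(X_k,\cM)$.

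Next I would split the long exact sequence
\begin{multline*}
	\cdots \to F^0 H^m_\dR(\sX,\cM) \xrightarrow{1-\phi} H^m_\rig(X_k,\cM) \xrightarrow{\delta} H^{m+1}_\syn(\sX,\cM) \\
	\to F^0 H^{m+1}_\dR(\sX,\cM) \xrightarrow{1-\phi} H^{m+1}_\rig(X_k,\cM) \to \cdots
\end{multline*}
at the term $H^{m+1}_\syn(\sX,\cM)$. Exactness says that $\Ker \delta$ is the image of $1-\phi$ on $H^m$, so $\delta$ induces an injection of $\Coker(1-\phi\mid H^m_\rig)$ into $H^{m+1}_\syn(\sX,\cM)$, while the image of the subsequent map $H^{m+1}_\syn(\sX,\cM)\to F^0 H^{m+1}_\dR(\sX,\cM)$ equals $\Ker(1-\phi\mid H^{m+1}_\rig)$. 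This yields the short exact sequence
\begin{multline*}
	0 \to \Coker\left(F^0 H^m_\dR \xrightarrow{1-\phi} H^m_\rig\right) \to H^{m+1}_\syn(\sX,\cM) \\
	\to \Ker\left(F^0 H^{m+1}_\dR \xrightarrow{1-\phi} H^{m+1}_\rig\right) \to 0,
\end{multline*}
and substituting the identifications from the first step rewrites the outer terms as $H^1_\syn(\sV, H^m_\rig(\sX,\cM))$ and $H^0_\syn(\sV, H^{m+1}_\rig(\sX,\cM))$, which is the claim.

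I do not expect a genuine obstacle here; the statement is a formal consequence of the long exact sequence together with the explicit description of syntomic cohomology over $\sV$. The only point worth a remark is that the Hodge filtration on $H^m_\rig$ used to form $H^i_\syn(\sV,-)$ is exactly the one induced from de Rham cohomology, so that it is compatible with the boundary maps of the long exact sequence; but this is automatic from the construction of $R^\bullet_\syn(\frU,\cM)$ as the shifted cone of $(1-\frobphi_\frU)\circ\theta_\frU$ on $F^0 R^\bullet_\dR(\frU,\cM)$, so nothing further needs to be verified.
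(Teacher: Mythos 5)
Your argument is correct and is exactly the intended derivation: the paper states this corollary without proof as an immediate consequence of the preceding proposition, namely splitting the long exact sequence at $H^{m+1}_\syn(\sX,\cM)$ and identifying the resulting kernel and cokernel of $1-\phi$ with $H^0_\syn(\sV,-)$ and $H^1_\syn(\sV,-)$ of the rigid cohomology groups regarded as filtered Frobenius modules via the comparison isomorphism. Nothing is missing; your closing remark about the compatibility of the filtration transported from de Rham cohomology is the only point that needed saying, and you said it.
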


\begin{definition}
	We define the boundary map
	\begin{equation}\label{eq: leray boundary}
		H^{m}_\syn(\sX, \cM) \rightarrow H^m_{\dR}(\sX, \cM)
	\end{equation}
	to be the composition of the surjection 
	$$
		H^{m+1}_\syn(\sX, \cM) \rightarrow H^0_\syn(\sV, H^{m+1}_\rig(\sX, \cM))
	$$ 
	with the natural injection
	$$
		H^0_\syn(\sV, H^{m+1}_\rig(\sX, \cM)) \hookrightarrow H^m_{\dR}(\sX, \cM).
	$$
\end{definition}

%
\subsection{Cohomology class in $H^1$}				
%

In this section, we give a method to explicitly describe a cohomology class in the first syntomic cohomology of
an admissible filtered overconvergent $F$-isocrystal.  Suppose $\sX = (X, \ol X, \frobphi_X)$ is a syntomic data
and suppose $\sM = (M, \nabla, F, \Phi)$ is an admissible filtered overconvergent  $F$-isocrystal in $S(\sX)$.   
Then we have the following.

\begin{proposition}\label{prop: fund class}
	Suppose $\cM = (M, \nabla, F, \Phi)$ is an admissible filtered overconvergent $F$-isocrystal on $\sX$
	such that $F^0 M = 0$.	
	Then a class
	 $$
	 	[\alpha] \in H^1_\syn(\sX, \cM)
	$$ 
	is given uniquely by pairs of sections $(\alpha, \xi)$ for
	\begin{align*}
		\alpha  &\in \Gamma(\ol\cX_K, M_\rig),  &  \xi \in \Gamma(\ol X_K, F^{-1} M \otimes \Omega^1_{\ol X_K}(\log D))
	\end{align*}
	satisfying the conditions
	$
		\nabla(\alpha) = (1 - \Phi) \xi
	$
	and $\nabla(\xi) = 0$.
\end{proposition}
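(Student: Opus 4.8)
The plan is to compute $H^1_\syn(\sX,\cM)$ directly from the mapping-cone complex
$R^\bullet_\syn(\frU,\cM) = \Cone\bigl(F^0 R^\bullet_\dR(\frU,\cM)\to R^\bullet_\rig(\frU,\cM)\bigr)[-1]$
for a covering $\frU$ of $\ol X$ by affine Zariski opens, using the canonical isomorphism $H^1(R^\bullet_\syn(\frU,\cM))\cong H^1_\syn(\sX,\cM)$ for such $\frU$. The crucial point is that the hypothesis $F^0 M = 0$ kills the degree-$0$ part of the syntomic complex: $R^0_\syn(\frU,\cM) = F^0 R^0_\dR(\frU,\cM)\oplus R^{-1}_\rig(\frU,\cM)$, the second summand vanishes for degree reasons, and the first is the Cech group $\prod_i\Gamma(\ol U_{iK}, F^0\DR^0_\dR(M)) = \prod_i\Gamma(\ol U_{iK}, F^0 M) = 0$ by the filtration formula \eqref{eq: filtration}. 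Consequently $R^\bullet_\syn(\frU,\cM)$ has no coboundaries in degree $1$, so $H^1(R^\bullet_\syn(\frU,\cM))$ equals its group of $1$-cocycles, and a syntomic class \emph{is} its defining cocycle.

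Next I would unwind what a $1$-cocycle is. A $1$-cochain is a pair $(\xi,\alpha)$ with $\xi\in F^0 R^1_\dR(\frU,\cM)$ and $\alpha\in R^0_\rig(\frU,\cM)$. Again using $F^0 M = 0$ in \eqref{eq: filtration} one has $F^0 R^1_\dR(\frU,\cM) = \prod_i\Gamma(\ol U_{iK}, F^{-1}M\otimes\Omega^1_{\ol X_K}(\log D))$ (the other summand vanishing since $F^0 M = 0$), so $\xi$ is automatically of Cech degree $0$, while $\alpha\in\prod_i\Gamma(\ol\cX_K, j_i^\dagger M_\rig)$. The cone differential sends $(\xi,\alpha)$ to $\bigl(d_\dR\xi,\ (1-\frobphi_\frU)\theta_\frU(\xi) - d_\rig\alpha\bigr)$, where $d_\dR$ and $d_\rig$ are the total differentials of the two Cech--de Rham double complexes. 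The vanishing of $d_\dR\xi$ splits into its Cech-degree-$1$ component, forcing $\xi_i = \xi_j$ on overlaps so that $\xi$ glues to a global section of $F^{-1}M\otimes\Omega^1_{\ol X_K}(\log D)$, and its Cech-degree-$0$ component, which lies in $F^{-2}M\otimes\Omega^2_{\ol X_K}(\log D)$ by Griffiths transversality and gives $\nabla(\xi) = 0$. Since $\theta_\frU$ and $\frobphi_\frU$ are both built Cech-level by Cech-level, $(1-\frobphi_\frU)\theta_\frU(\xi)$ lies purely in Cech degree $0$, so the vanishing of the second entry splits into $\alpha_i = \alpha_j$ on overlaps --- whence $\alpha$ glues to a section of $M_\rig$ over $\ol\cX_K$, using the Cech property of $j^\dagger$ --- together with $\nabla_\rig(\alpha) = (1-\Phi)\theta(\xi)$, which is precisely $\nabla(\alpha) = (1-\Phi)\xi$ after identifying $\xi$ with its image $\theta(\xi)$ in $\Gamma(\ol\cX_K, M_\rig\otimes\Omega^1_{\ol\cX_K})$.

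Conversely, any pair $(\alpha,\xi)$ as in the statement restricts over an affine covering $\frU$ to a $1$-cocycle of $R^\bullet_\syn(\frU,\cM)$, and the operations ``restrict'' and ``glue'' are mutually inverse bijections between the set of such pairs and $Z^1(R^\bullet_\syn(\frU,\cM)) = H^1(R^\bullet_\syn(\frU,\cM)) \cong H^1_\syn(\sX,\cM)$; in particular the identification is independent of $\frU$, and distinct pairs give distinct classes, which is the asserted uniqueness. I expect the only genuine work to be bookkeeping: tracking the two double-complex structures (Cech${}\times{}$de Rham on $\ol X_K$ versus Cech${}\times{}$rigid de Rham on $\ol\cX_K$), checking that the $\frobphi_\frU\circ\theta_\frU$ term sits in Cech degree $0$, and quoting the gluing property of overconvergent sections. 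No conceptual input beyond $F^0 M = 0$ is needed; in particular admissibility of $\cM$ is not used here.
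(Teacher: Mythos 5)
Your proposal is correct and follows essentially the same route as the paper's own proof: fix an affine covering, observe that $F^0M=0$ forces $R^0_\syn(\frU,\cM)=0$ so that degree-one classes coincide with cocycles, and then unwind the Cech components to see that $\alpha$ and $\xi$ glue to global sections satisfying $\nabla(\alpha)=(1-\Phi)\xi$ and $\nabla(\xi)=0$. Your extra bookkeeping on the double-complex degrees and the remark that admissibility is not used are both consistent with the paper's argument.
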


\begin{proof}
	We fix an affine open covering $\frU = \{ \ol U_i \}$ of $\ol X$.  Then we have
	$$
		H^1_\syn(\sX, \cM) = H^1( R^\bullet_\syn(\frU, \cM) ).
	$$
	The condition on the Hodge filtration of $\cM$ implies that 
	$ R^0_\syn(\frU, \cM) =  F^0 R^0_\dR(\frU, \cM) = 0$, and
	$F^0 R^\bullet_\dR(\frU, \cM)$ is given by the Cech complex
	$$ \left[  \prod_i \Gamma(\ol U_{iK}, F^0 \DR^1_\dR(M))
		\rightarrow   \prod_{i_0 i_1} \Gamma(\ol U_{i_0 i_1K}, F^0 \DR^1_\dR(M)) \rightarrow \cdots \right][1]
	$$
	for $F^0 \DR^1_\dR(M) = F^0 M \otimes \Omega^1_{X_K}(\log D)$.
	Suppose we have a class $[\alpha] \in H^1_\syn(\sX, \cM)$.  Then this class
	is represented by a pair
	$$
		(\alpha_\frU, \xi_\frU) \in R^{0}_\rig(\frU, \cM) \bigoplus F^0 R^1_\dR(\frU, \cM) 
	$$
	satisfying the cocycle conditions $\partial(\alpha_\frU) = (1 - \Phi) \xi_\frU$ and $\partial(\xi_\frU) = 0$,
	where $\partial$ is the differential operator on each of the complexes $R^{\bullet}_\rig$ and $ R^\bullet_\dR$.
	This representation is unique, since $R^0_\syn(\frU, \cM) = 0$ and thus there are no coboundaries.
	If we write $\alpha_\frU = (\alpha_i) \in \bigoplus_{i\in I} \Gamma(\ol\cX_K, j_i^\dagger M_\rig)$ and
	$$
		\xi_\frU = (\xi_i) \in \bigoplus_{i\in I} \Gamma(\ol U_{iK}, F^0 \DR^1_\dR(M)),
	$$
	then the cocycle conditions are $\nabla(\alpha_i) = (1-\Phi) \xi_i$, $\alpha_j = \alpha_i$ and $\xi_j = \xi _i$.
	Hence both $(\alpha_i)$ and $(\xi_j)$ paste together uniquely to global sections
	$\alpha \in \Gamma(\ol\cX_K, M_\rig)$ and $\xi \in \Gamma(\ol X_K, F^{-1} M \otimes \Omega^1_{\ol X_K}(\log D))$
	satisfying the differential equations $\nabla(\alpha) = (1 - \Phi) \xi$ and $\nabla(\xi) = 0$ as desired.
	Conversely, we see directly from the definition that a pair $(\alpha, \xi)$ satisfying the above conditions defines
	a class in $H^1_\syn(\sX, \cM)$.
\end{proof}

Suppose $\xi \in \Gamma(\ol X_K, M \otimes \Omega^1_{\ol X_K}(\log D))$ is an
element satisfying $\nabla(\xi) = 0$.  Then this defines a de Rham cohomology class
$$
	[\xi] \in H^1_\dR(\sX, \cM).
$$
By the previous proposition and the construction of the boundary morphism, we have the following.

\begin{corollary}\label{cor: fund class}
	Suppose $[\alpha] \in H^1_\syn(\sX, \cM)$ is of the form
	$$
		[\alpha] = (\alpha, \xi)
	$$
	as in the previous proposition, where $\alpha \in \Gamma(\ol\cX_K, M_\rig)$ and
	$\xi \in \Gamma(\ol X_K, M \otimes \Omega^1_{\ol X_K}(\log D))$.
	Then the image of $[\alpha]$ with respect to the boundary morphism
	$$
		H^1_\syn(\sX, \cM) \rightarrow H^1_\dR(\sX, \cM)
	$$
	is given by $[\xi]$.
\end{corollary}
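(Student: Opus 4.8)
The plan is to trace the explicit \v Cech cocycle furnished by Proposition \ref{prop: fund class} through the construction of the boundary morphism, so that the whole argument reduces to an unwinding of definitions. First I would fix an affine open covering $\frU = \{\ol U_i\}$ of $\ol X$, so that $H^1_\syn(\sX, \cM) = H^1(R^\bullet_\syn(\frU, \cM))$. As in the proof of Proposition \ref{prop: fund class}, the hypothesis $F^0 M = 0$ forces $R^0_\syn(\frU, \cM) = 0$ and kills the \v Cech degree $\geq 1$ part of the de Rham summand, so the class $[\alpha]$ is represented by a cocycle $(\xi_\frU, \alpha_\frU)$ in $F^0 R^1_\dR(\frU, \cM) \oplus R^0_\rig(\frU, \cM)$ with $\xi_\frU = (\xi_i)_i$, where the $\xi_i$ agree on overlaps and glue to the given global section $\xi \in \Gamma(\ol X_K, F^{-1} M \otimes \Omega^1_{\ol X_K}(\log D))$ with $\nabla(\xi) = 0$.

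Next I would unwind the boundary morphism on this representative. By construction $R^\bullet_\syn(\frU, \cM) = \Cone\bigl(F^0 R^\bullet_\dR(\frU, \cM) \xrightarrow{(1-\frobphi_\frU)\circ\theta_\frU} R^\bullet_\rig(\frU, \cM)\bigr)[-1]$, and the associated distinguished triangle gives the long exact sequence
$$
	\cdots \to H^0_\rig(X_k, \cM) \to H^1_\syn(\sX, \cM) \xrightarrow{\mathrm{pr}} F^0 H^1_\dR(\sX, \cM) \xrightarrow{1-\phi} H^1_\rig(X_k, \cM) \to \cdots,
$$
where I use admissibility, namely the degeneration of the Hodge--de Rham spectral sequence at $E_1$ and hence strictness, to identify $H^1\bigl(F^0 R^\bullet_\dR(\frU, \cM)\bigr)$ with $F^0 H^1_\dR(\sX, \cM)$, together with the comparison $H^1_\rig(X_k, \cM) \cong H^1_\dR(\sX, \cM)$. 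On cochains the map $\mathrm{pr}$ is the first projection $R^1_\syn(\frU, \cM) = F^0 R^1_\dR(\frU, \cM) \oplus R^0_\rig(\frU, \cM) \to F^0 R^1_\dR(\frU, \cM)$. I would then match $\mathrm{pr}$ with the boundary morphism of the preceding Definition: its image is $\ker(1-\phi) = H^0_\syn(\sV, H^1_\rig(\sX, \cM))$ and its kernel is the image of $H^0_\rig(X_k, \cM) = H^0_\dR(\sX, \cM)$, which is $H^1_\syn(\sV, H^0_\rig(\sX, \cM))$; so $\mathrm{pr}$ is precisely the surjection of the short exact sequence, and composing it with the inclusion $H^0_\syn(\sV, H^1_\rig(\sX, \cM)) \subset F^0 H^1_\dR(\sX, \cM) \subset H^1_\dR(\sX, \cM)$ recovers the boundary morphism $H^1_\syn(\sX, \cM) \to H^1_\dR(\sX, \cM)$.

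Finally I would conclude: applying $\mathrm{pr}$ to the cocycle $(\xi_\frU, \alpha_\frU)$ yields the class of $\xi_\frU = (\xi_i)_i$ in $F^0 H^1_\dR(\sX, \cM) = H^1\bigl(F^0 R^\bullet_\dR(\frU, \cM)\bigr)$, and since the $\xi_i$ glue to the global closed section $\xi$, this class is by definition the de Rham class $[\xi]$; mapping further into $H^1_\dR(\sX, \cM)$ by the inclusion leaves $[\xi]$ unchanged. Hence the image of $[\alpha]$ under the boundary morphism is $[\xi]$, as claimed. The only step requiring genuine care --- and the one I expect to be the main obstacle --- is the identification in the second paragraph: one must verify that admissibility makes $H^1\bigl(F^0 R^\bullet_\dR(\frU, \cM)\bigr) \to F^0 H^1_\dR(\sX, \cM)$ an isomorphism (this is where strictness of the Hodge filtration enters) and that the two descriptions of the boundary morphism --- the abstract one via the short exact sequence $0 \to H^1_\syn(\sV, H^0_\rig(\sX, \cM)) \to H^1_\syn(\sX, \cM) \to H^0_\syn(\sV, H^1_\rig(\sX, \cM)) \to 0$ and the concrete one via the first projection on the cone complex --- are compatible with the Frobenius and with the de Rham--rigid comparison. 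Everything else is formal.
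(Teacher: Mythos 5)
Your proposal is correct and is exactly the argument the paper intends: the paper omits the proof entirely, asserting only that the corollary follows ``by the previous proposition and the construction of the boundary morphism,'' and your unwinding --- representing $[\alpha]$ by the cocycle $(\xi_\frU,\alpha_\frU)$, identifying the boundary map with the first projection of the cone complex followed by the inclusion $F^0H^1_\dR \subset H^1_\dR$, and gluing the $\xi_i$ back to the global closed form $\xi$ --- is precisely the omitted verification. The compatibility check you flag at the end (that the cone-complex projection agrees with the surjection in the short exact sequence, using $E_1$-degeneration and the de Rham--rigid comparison) is indeed the only nontrivial point, and your treatment of it is sound.
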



\end{document}